\title{ Rokhlin dimension for actions of residually finite groups }
\author{ G{\'a}bor Szab{\'o}, Jianchao Wu \and Joachim Zacharias }
\address{ Fraser Noble Building, Institute of Mathematics, University of Aberdeen, \linebreak \text{}\hspace{2.8mm} Aberdeen AB24 3UE, Scotland, UK }
\email{ gabor.szabo@abdn.ac.uk }
\address{ Department of Mathematics, Pennsylvania State University, \hfill \phantom{-} \linebreak \text{}\hspace{2.8mm} 109 McAllister Building, University Park, PA 16802, USA }
\email{ jianchao.wu@psu.edu }
\address{ School of Mathematics and Statistics, University of Glasgow, \hfill \phantom{-}\linebreak \text{}\hspace{2.8mm} University Gardens, Glasgow Q12 8QW, Scotland, UK }
\email{ joachim.zacharias@glasgow.ac.uk }
\thanks{\emph{Supported by:} SFB 878 \emph{Groups, Geometry and Actions}, GIF Grant 1137-30.6/2011, ERC Advanced Grant ToDyRiC 267079, EPSRC Grant EP/I019227/2 and EPSRC Grant EP/N00874X/1}
\subjclass[2010]{Primary 46L55, 54H20; Secondary 46L35, 20F69, 20F18}
\begin{document}

\renewcommand\matrix[1]{\left(\begin{array}{*{10}{c}} #1 \end{array}\right)}  
\newcommand\set[1]{\left\{#1\right\}}  
\newcommand\mset[1]{\left\{\!\!\left\{#1\right\}\!\!\right\}}

\newcommand{\IA}[0]{\mathbb{A}} \newcommand{\IB}[0]{\mathbb{B}}
\newcommand{\IC}[0]{\mathbb{C}} \newcommand{\ID}[0]{\mathbb{D}}
\newcommand{\IE}[0]{\mathbb{E}} \newcommand{\IF}[0]{\mathbb{F}}
\newcommand{\IG}[0]{\mathbb{G}} \newcommand{\IH}[0]{\mathbb{H}}
\newcommand{\II}[0]{\mathbb{I}} \renewcommand{\IJ}[0]{\mathbb{J}}
\newcommand{\IK}[0]{\mathbb{K}} \newcommand{\IL}[0]{\mathbb{L}}
\newcommand{\IM}[0]{\mathbb{M}} \newcommand{\IN}[0]{\mathbb{N}}
\newcommand{\IO}[0]{\mathbb{O}} \newcommand{\IP}[0]{\mathbb{P}}
\newcommand{\IQ}[0]{\mathbb{Q}} \newcommand{\IR}[0]{\mathbb{R}}
\newcommand{\IS}[0]{\mathbb{S}} \newcommand{\IT}[0]{\mathbb{T}}
\newcommand{\IU}[0]{\mathbb{U}} \newcommand{\IV}[0]{\mathbb{V}}
\newcommand{\IW}[0]{\mathbb{W}} \newcommand{\IX}[0]{\mathbb{X}}
\newcommand{\IY}[0]{\mathbb{Y}} \newcommand{\IZ}[0]{\mathbb{Z}}

\newcommand{\CA}[0]{\mathcal{A}} \newcommand{\CB}[0]{\mathcal{B}}
\newcommand{\CC}[0]{\mathcal{C}} \newcommand{\CD}[0]{\mathcal{D}}
\newcommand{\CE}[0]{\mathcal{E}} \newcommand{\CF}[0]{\mathcal{F}}
\newcommand{\CG}[0]{\mathcal{G}} \newcommand{\CH}[0]{\mathcal{H}}
\newcommand{\CI}[0]{\mathcal{I}} \newcommand{\CJ}[0]{\mathcal{J}}
\newcommand{\CK}[0]{\mathcal{K}} \newcommand{\CL}[0]{\mathcal{L}}
\newcommand{\CM}[0]{\mathcal{M}} \newcommand{\CN}[0]{\mathcal{N}}
\newcommand{\CO}[0]{\mathcal{O}} \newcommand{\CP}[0]{\mathcal{P}}
\newcommand{\CQ}[0]{\mathcal{Q}} \newcommand{\CR}[0]{\mathcal{R}}
\newcommand{\CS}[0]{\mathcal{S}} \newcommand{\CT}[0]{\mathcal{T}}
\newcommand{\CU}[0]{\mathcal{U}} \newcommand{\CV}[0]{\mathcal{V}}
\newcommand{\CW}[0]{\mathcal{W}} \newcommand{\CX}[0]{\mathcal{X}}
\newcommand{\CY}[0]{\mathcal{Y}} \newcommand{\CZ}[0]{\mathcal{Z}}

\newcommand{\FA}[0]{\mathfrak{A}} \newcommand{\FB}[0]{\mathfrak{B}}
\newcommand{\FC}[0]{\mathfrak{C}} \newcommand{\FD}[0]{\mathfrak{D}}
\newcommand{\FE}[0]{\mathfrak{E}} \newcommand{\FF}[0]{\mathfrak{F}}
\newcommand{\FG}[0]{\mathfrak{G}} \newcommand{\FH}[0]{\mathfrak{H}}
\newcommand{\FI}[0]{\mathfrak{I}} \newcommand{\FJ}[0]{\mathfrak{J}}
\newcommand{\FK}[0]{\mathfrak{K}} \newcommand{\FL}[0]{\mathfrak{L}}
\newcommand{\FM}[0]{\mathfrak{M}} \newcommand{\FN}[0]{\mathfrak{N}}
\newcommand{\FO}[0]{\mathfrak{O}} \newcommand{\FP}[0]{\mathfrak{P}}
\newcommand{\FQ}[0]{\mathfrak{Q}} \newcommand{\FR}[0]{\mathfrak{R}}
\newcommand{\FS}[0]{\mathfrak{S}} \newcommand{\FT}[0]{\mathfrak{T}}
\newcommand{\FU}[0]{\mathfrak{U}} \newcommand{\FV}[0]{\mathfrak{V}}
\newcommand{\FW}[0]{\mathfrak{W}} \newcommand{\FX}[0]{\mathfrak{X}}
\newcommand{\FY}[0]{\mathfrak{Y}} \newcommand{\FZ}[0]{\mathfrak{Z}}

\newcommand{\Ra}[0]{\Rightarrow}
\newcommand{\La}[0]{\Leftarrow}
\newcommand{\LRa}[0]{\Leftrightarrow}

\renewcommand{\phi}[0]{\varphi}
\newcommand{\eps}[0]{\varepsilon}

\newcommand{\quer}[0]{\overline}
\newcommand{\uber}[0]{\choose}
\newcommand{\ord}[0]{\operatorname{ord}}		
\newcommand{\GL}[0]{\operatorname{GL}}
\newcommand{\supp}[0]{\operatorname{supp}}	
\newcommand{\id}[0]{\operatorname{id}}		
\newcommand{\Sp}[0]{\operatorname{Sp}}		
\newcommand{\eins}[0]{\mathbf{1}}			
\newcommand{\diag}[0]{\operatorname{diag}}
\newcommand{\ind}[0]{\operatorname{ind}}
\newcommand{\auf}[1]{\quad\stackrel{#1}{\longrightarrow}\quad}
\newcommand{\hull}[0]{\operatorname{hull}}
\newcommand{\prim}[0]{\operatorname{Prim}}
\newcommand{\ad}[0]{\operatorname{Ad}}
\newcommand{\quot}[0]{\operatorname{Quot}}
\newcommand{\ext}[0]{\operatorname{Ext}}
\newcommand{\ev}[0]{\operatorname{ev}}
\newcommand{\fin}[0]{{\subset\!\!\!\subset}}
\newcommand{\diam}[0]{\operatorname{diam}}
\newcommand{\Hom}[0]{\operatorname{Hom}}
\newcommand{\Aut}[0]{\operatorname{Aut}}
\newcommand{\del}[0]{\partial}
\newcommand{\inter}[0]{\operatorname{int}}
\newcommand{\dimeins}[0]{\dim^{\!+1}}
\newcommand{\dimnuc}[0]{\dim_{\mathrm{nuc}}}
\newcommand{\dimnuceins}[0]{\dimnuc^{\!+1}}
\newcommand{\dr}[0]{\operatorname{dr}}
\newcommand{\dimrok}[0]{\dim_{\mathrm{Rok}}}
\newcommand{\dimrokeins}[0]{\dimrok^{\!+1}}
\newcommand{\dreins}[0]{\dr^{\!+1}}
\newcommand*\onto{\ensuremath{\joinrel\relbar\joinrel\twoheadrightarrow}} 
\newcommand*\into{\ensuremath{\lhook\joinrel\relbar\joinrel\rightarrow}}  
\newcommand{\im}[0]{\operatorname{im}}
\newcommand{\dst}[0]{\displaystyle}
\newcommand{\cstar}[0]{$\mathrm{C}^*$}
\newcommand{\ann}[0]{\operatorname{Ann}}
\newcommand{\dist}[0]{\operatorname{dist}}
\newcommand{\asdim}[0]{\operatorname{asdim}}
\newcommand{\asdimeins}[0]{\operatorname{asdim}^{\!+1}}
\newcommand{\amdim}[0]{\dim_{\mathrm{am}}}
\newcommand{\amdimeins}[0]{\amdim^{\!+1}}
\newcommand{\dimrokc}[0]{\dim_{\mathrm{Rok}}^{\mathrm{c}}}
\newcommand{\coact}[0]{\operatorname{CoAct}}

\newtheorem{satz}{Satz}[section]		
\newtheorem{cor}[satz]{Corollary}
\newtheorem{lemma}[satz]{Lemma}
\newtheorem{prop}[satz]{Proposition}
\newtheorem{theorem}[satz]{Theorem}
\newtheorem*{theoreme}{Theorem}

\theoremstyle{definition}
\newtheorem{defi}[satz]{Definition}
\newtheorem*{defie}{Definition}
\newtheorem{defprop}[satz]{Definition \& Proposition}
\newtheorem{nota}[satz]{Notation}
\newtheorem*{notae}{Notation}
\newtheorem{rem}[satz]{Remark}
\newtheorem*{reme}{Remark}
\newtheorem{example}[satz]{Example}
\newtheorem{defnot}[satz]{Definition \& Notation}
\newtheorem{question}[satz]{Question}
\newtheorem*{questione}{Question}
\newtheorem{contruction}[satz]{Contruction}

\theoremstyle{plain}
\newcounter{theoremintro}
\newtheorem{thmintro}[theoremintro]{Theorem}
%
%
%
\theoremstyle{definition}
\newtheorem{defnintro}[theoremintro]{Definition}

\newenvironment{bew}{\begin{proof}[Proof]}{\end{proof}}

\numberwithin{equation}{section}
\renewcommand{\theequation}{\thesection\Alph{equation}}

\begin{abstract} 
We introduce the concept of Rokhlin dimension for actions of residually finite groups on \cstar-algebras, extending previous such notions for actions of finite groups and the integers by Hirshberg, Winter and the third author. We are able to extend most of their results to a much larger class of groups: those admitting box spaces of finite asymptotic dimension. This latter condition is a refinement of finite asymptotic dimension and has not been considered previously. In a detailed study we show that finitely generated, virtually nilpotent groups have box spaces with finite asymptotic dimension, providing a large class of examples. We show that actions with finite Rokhlin dimension by groups with finite dimensional box spaces preserve the property of having finite nuclear dimension when passing to the crossed product \cstar-algebra.
We then establish a relation between Rokhlin dimension of residually finite groups acting on compact metric spaces and amenability dimension of the action in the sense of Guentner, Willett and Yu. We show that for free actions of infinite, finitely generated, nilpotent groups on finite dimensional spaces, both these dimensional values are finite. In particular, the associated transformation group \cstar-algebras have finite nuclear dimension. This extends an analogous result about $\IZ^m$-actions by the first author to a significantly larger class of groups, showing that a large class of crossed products by actions of such groups fall under the remit of the Elliott classification programme. 
We also provide results concerning the genericity of finite Rokhlin dimension, and permanence properties with respect to the absorption of a strongly self-absorbing \cstar-algebra.
\end{abstract}

\maketitle



\tableofcontents

\section*{Introduction}

\renewcommand*{\thetheoremintro}{\Alph{theoremintro}}

\noindent
Since its very beginning, the theory of operator algebras has been strongly influenced and even motivated by ideas of dynamical nature. The study of \cstar-dynamical systems, i.e.~group actions of locally compact groups on \cstar-algebras, is interesting in many ways. An ample reason for this is certainly the crossed product construction, which naturally associates a new \cstar-algebra  to a \cstar-dynamical system, reflecting the structure of the group, the coefficient algebra, and the action. This construction has, by now, proved to be a virtually inexhaustible source of interesting examples of \cstar-algebras. As such, it is not surprising that the crossed product construction is harder to understand than most other standard constructions to create new \cstar-algebras. Moreover, crossed products are often simple \cstar-algebras of importance in the Elliott classification programme.
In view of the astonishing progess of \cstar-algebra classification over the last few years~\cite{GongLinNiu14, ElliottNiu15, ElliottGongLinNiu15, TikuisisWhiteWinter15}, which is largely driven by the discovery of various regularity properties (see for example~\cite{Winter14Lin, Winter10, Winter12, RobertTikuisis14, MatuiSato12, KirchbergRordam12, TomsWhiteWinter12, Sato13, MatuiSato14UHF, Winter14, SatoWhiteWinter14, BBSTWW}), a natural and important question appears, which can be regarded as the main motivation of this paper: 

\begin{questione} Given a \cstar-dynamical system $\alpha: G\curvearrowright A$, under what conditions on $\alpha$ does regularity pass from $A$ to $A\rtimes_\alpha G$?
\end{questione}

Of course, the vague term `regularity' leaves some room for interpretation, but this question is interesting for all possible versions of regularity. Within this paper, we will mainly focus on the regularity property of having finite nuclear dimension, which is an approximation property stronger than nuclearity. We will also have a secondary focus on $\CD$-stability in section 9, for a chosen strongly self-absorbing \cstar-algebra $\CD$. 

General results on nuclear dimension of crossed products are hard to come by. What seems to be required is a certain regularity property of the action.
As evidenced by recent developments, Rokhlin-type properties appear to be very good candidates for such a regularity property. The idea of defining a Rokhlin property stems from a basic early result in the dynamics of group actions on measure spaces, the so-called Rokhlin Lemma. This result states that a measure-preserving, aperiodic integer action can be approximated by cyclic shifts in a suitable sense. A von Neumann theoretic translation of this approximation was used successfully by Connes as a key technique towards classifying automorphisms on the hyperfinite II${}_1$-factor up to outer conjugacy, see~\cite{Connes75, Connes77}. Since then, lots of generalizations and reinterpretations have emerged within the realm of von Neumann algebras. Preliminary variants of the Rokhlin property emerged in works of Herman-Jones~\cite{HermanJones82} and Herman-Ocneanu~\cite{HermanOcneanu84} for cyclic group actions on UHF algebras.
Eventually, Kishimoto was successful in fleshing out the Rokhlin property for integer actions on \cstar-algebras, see for instance~\cite{Kishimoto95, Kishimoto96, Kishimoto96_R, Kishimoto98, Kishimoto98II}. Then Izumi started his pioneering work on finite group actions with the Rokhlin property, see~\cite{Izumi04, Izumi04II}.
For a selection of other papers related to Rokhlin-type properties for \cstar-dynamical systems, see~\cite{Phillips11, Phillips12, OsakaPhillips12, Santiago14, Nakamura99, Sato10, MatuiSato12_2, MatuiSato14, Kishimoto02, BratteliKishimotoRobinson07, HirshbergWinter07, Gardella14_5}. (This list is not exhaustive)

As powerful as these earlier Rokhlin properties are, most variants share a common disadvantage. As their definition involves Rokhlin towers consisting of projections, they impose strong restrictions on the \cstar-algebra in question, ruling out important examples such as the Jiang-Su algebra $\CZ$ or any other sufficiently projectionless \cstar-algebra. To circumvent this problem, Hirshberg, Winter and the third author introduced the concept of Rokhlin dimension for actions of finite groups and the integers in~\cite{HirshbergWinterZacharias14}. See also~\cite{Szabo14} for an extension to $\IZ^m$-actions. The concept of Rokhlin dimension yields a generalization of many of the commonly used Rokhlin-type properties, motivated by the idea of covering dimension. Instead of requiring one \mbox{(multi-)} tower consisting of projections to reflect a shift-type behaviour in the given dynamical system, one allows boundedly many of such towers consisting of positive elements, the bound defining the dimensional value. It turns out that this generalization provides a much more flexible concept, while still being strong enough to be compatible with the regularity property of having finite nuclear dimension. For a selection of other papers that, at least in large part, emerged out of the ideas within~\cite{HirshbergWinterZacharias14}, see~\cite{BarlakEndersMatuiSzaboWinter, Szabo14, Gardella14_3, Gardella14_4, HirshbergPhillips15, Liao15, Liao16, HirshbergWu15, HirshbergSzaboWinterWu16, BrownTikuisisZelenberg16}.

The main results in~\cite{HirshbergWinterZacharias14} are a bound on the nuclear dimension of crossed products by single automorphisms (i.e.\ $\IZ$-actions) of finite Rokhlin dimension, a bound for the Rokhlin dimension for minimal actions on finite dimensional compact metric  spaces, as well as a permanence result for $\CZ$-stability and a genericity result for finiteness of Rokhlin dimension in the set of all actions on a given algebra. The first two results were generalized by the first author to $\IZ^m$-actions in~\cite{Szabo14}, who also gave a better framework for establising a bound for the Rokhlin dimension on compact metric spaces requiring only freeness of the action. This used some crucial topological arguments inspired by~\cite{Lindenstrauss95, Gutman14}, the result of which can be regarded as a topological version of the classical Rokhlin Lemma. Combined with the nuclear dimension bound, it shows that \cstar-algebras associated to such free and minimal dynamical systems have finite nuclear dimension and thus fall within the scope of Elliott's classification programme. Such nuclear dimension results were first obtained by Toms and Winter in~\cite{TomsWinter13} by different methods.

The main purpose of this paper is to generalize the concept of Rokhlin dimension to actions of all countable, discrete and residually finite groups and to extend the results from~\cite{HirshbergWinterZacharias14} and~\cite{Szabo14} to this setting. Moreover, we enlarge the class of \cstar-dynamical systems under consideration to non-unital \cstar-algebras, and cocycle actions instead of ordinary actions.
\begin{defnintro}
	Let $A$ be a separable \cstar-algebra, $G$ a countable, residually finite group and $(\alpha,w): G\curvearrowright A$ a cocycle action. The (full) Rokhlin dimension of $\alpha$ is defined as the infimum of natural numbers $d$ such that for any finite-index subgroup $H$ of $G$, there exist equivariant c.p.c.~order zero maps
	\[
	\phi_l: (\CC(G/H), G\text{-shift})\longrightarrow (F_\infty(A), \tilde{\alpha}_\infty) \, ,\quad \text{for}~ l=0,\dots,d
	\]
	with $\phi_0(\eins)+\dots+\phi_d(\eins)=\eins$.
\end{defnintro}
Here the left-hand side involves the canonical action of $G$ on the algebra of functions over the finite quotient $G/H$, while on the right-hand side, $F_\infty(A)$ is Kirchberg's central sequence algebra and $\tilde{\alpha}_\infty$ is the action on $F_\infty(A)$ naturally induced from $\alpha$ (see Definition~\ref{dimrok allgemein}). The definition can be refined by considering only a sufficiently separating sequence $\sigma$ of finite-index subgroups, and the separability of $A$ is not an essential prerequisite for the definition (see Definition~\ref{defi:dimrok}). In fact, the whole setting can be generalized to include also uncountable groups.  We also give an equivalent characterization involving approximate towers of positive elements on the algebraic level, in the spirit of~\cite{HirshbergWinterZacharias14} (see Proposition~\ref{quantifier dimrok}). 

The benefit of this generalization is threefold: it not only allows us to study a wider class of examples, but also unifies all the arguments made (so far separately) for finite groups and the integers, and provides a clearer conceptual picture for how the Rokhlin dimension relates to \cstar-algebraic regularity properties such as the nuclear dimension. The crucial insight is that the applications of the Rokhlin dimension require a complementary ingredient of coarse geometric nature, namely the box space $\square_\sigma G$ (in the sense of Roe, see~\cite[11.24]{RoeCG}) associated to a residually finite group $G$ and some chosen sequence of finite-index subgroups $\sigma = (G_n)_n$. It turns out that a group yields an interesting Rokhlin dimension theory if it admits a box space with finite asymptotic dimension. This insight culminates in Theorem~\ref{Hauptsatz}, where we establish the following upper bound for the nuclear dimension of (twisted) crossed products:

\begin{thmintro}\label{thmintro:dimnuc}
For every \cstar-algebra $A$ and every cocycle action $(\alpha,w): G\curvearrowright A$, we have the upper bound
\[
\dimnuceins(A\rtimes_{\alpha,w} G) \leq \asdimeins(\square_\sigma G)\cdot \dimrokeins(\alpha, \sigma)\cdot\dimnuceins(A) \; .
\]
\end{thmintro}

In the above inequality, the dimensions appearing on the right-hand side are the asymptotic dimension of the box space for $\sigma$, the Rokhlin dimension of $\alpha$ along $\sigma$, and the nuclear dimension of the coefficient algebra $A$, respectively. For notational convenience, each dimension in the formula is increased by $1$, as denoted by the superscripts. By introducing box spaces in this context, this bound is improved and gives a coarse geometric interpretation of the ad-hoc constants appearing in previous such estimates.


The asymptotic dimension and the box space construction have individually played significant roles in coarse geometry and have had remarkable applications to the Baum-Connes conjecture; however, to the best of the authors' knowledge, this is the first time the effects of combining these two notions come under careful investigation. We conduct such an investigation from a general point of view in Section~\ref{sec:coarse-geometry}, even before we can generalize the Rokhlin dimension theory. It turns out that some care has to be taken in the choice of the finite-index subgroups featuring in the box space but that good choices always exist (see Definition~\ref{defi:semi-conj-separating} and Corollary~\ref{cor:semi-conj-separating-exists}). We also give a key characterization of the asymptotic dimension in terms of partitions of unity in Lemma~\ref{decay functions} that is geared towards our main application in Theorem~\ref{Hauptsatz}. 

A central question in Sections~\ref{sec:coarse-geometry} and~\ref{sec:nilpotent-box} is what groups admit box spaces with finite asymptotic dimension, a condition that is vital for the applications of our Rokhlin dimension theory. This is certainly satisfied for finite groups and the integers, and was already (implicitly) crucial in the existing theory for these groups. On the other hand, it follows from classical results in coarse geometry that such a condition implies the amenability of the group. We devote the entire Section~\ref{sec:nilpotent-box} to proving the following (see Theorem~\ref{asdim nilpotent}): 

\begin{thmintro}\label{thmintro:nilpotent-box}
	Every finitely generated, virtually nilpotent group admits a box space with finite asymptotic dimension. 
\end{thmintro}

This provides a reasonably large class of groups to which Theorems~\ref{thmintro:dimnuc} and~\ref{thmintro:D-stability} can be applied. Our proof is done by developing a criterion for a discrete subgroup of a locally compact group to have finite dimensional box spaces that involves the existence of a sequence of expanding automorphisms of the group. For nilpotent groups, we then apply this to known embeddings into certain Lie groups.  For further examples of groups with finite dimensional box spaces, see a preprint by Finn-Sell and the second author~\cite{FinnSellWu14}, which was inspired by and improves the results on box spaces in this paper. Among other things, it is shown there that for every virtually polycyclic group, there exists some box space whose asymptotic dimension is at most the Hirsch length of the group. 

There are further interesting connections of our work to ongoing developments in dimension theory for topological dynamics. Recently, Guentner, Willett and Yu have invented various stronger versions of amenability of discrete group actions on compact metric spaces, see~\cite{GuentnerWillettYu14_1, GuentnerWillettYu15}. In particular, they introduced for such actions a notion that may be termed amenability dimension\footnote{Our choice of this terminology ``amenability dimension'' is based on an early draft of Guentner, Willett and Yu, where they used the term ``$d$-amenable'' to denote what we call having amenability dimension at most $d$. Since then, they have generalized the notion to what they call ``$d$-BLR'', which is closely related to the dynamic asymptotic dimension defined in the same paper (see~\cite[Remark 4.14]{GuentnerWillettYu15}).}. This is a dynamical analogue of asymptotic dimension, the finiteness of which may be regarded as a strong form of amenability of the action. The original motivation to introduce amenability dimension was to facilitate the computation of $K$-theoretic invariants, and in particular to prove the Farrell-Jones conjecture or the Baum-Connes conjecture in certain cases. In fact, their work is in part inspired by the pioneering work of Bartels, L\"uck and Reich on the Farrell-Jones conjecture for hyperbolic groups in~\cite{BartelsLuckReich08}. Somewhat surprisingly, there is a theorem among Guentner-Willett-Yu's results that showcases the compatibility of amenability dimension with nuclear dimension, in a similar fashion as for Rokhlin dimension. This raises the question to what extent amenability dimension and Rokhlin dimension are related concepts. 
After examining amenability dimension in rather \cstar-algebraic terms, we show that for groups with finite dimensional box spaces, the two dimensional invariants are of the same order of magnitude (see Theorem~\ref{dimrok amdim}): 

\begin{thmintro}
	Let $A$ be a \cstar-algebra, $G$ a residually finite group admitting a box space with finite asymptotic dimension, and $(\alpha,w): G\curvearrowright A$ a cocycle action. Then the Rokhlin dimension of $(\alpha,w)$ is finite if and only if its amenability dimension is finite. 
\end{thmintro}

As a major application of this result together with techniques developed in~\cite{Szabo14} and some geometric group theory, we extend the finiteness results for Rokhlin dimension to free actions of infinite, finitely generated, nilpotent groups on finite dimensional compact metric spaces, see Corollary~\ref{free nilpotent dimnuc}. When combined with the astounding recent progress in the Elliott programme by many hands~\cite{GongLinNiu14, ElliottNiu15, ElliottGongLinNiu15, TikuisisWhiteWinter15}, this shows that a large class of simple transformation group \cstar-algebras associated to actions of nilpotent groups is classifiable (see Theorem~\ref{crossed product classifiable}):

\begin{thmintro}\label{thmintro:classification}
	Let $G$ be a finitely generated, infinite, nilpotent group and $X$ a compact metric space of finite covering dimension. Let $\alpha: G\curvearrowright X$ be a free and minimal action. Then the transformation group \cstar-algebra $\CC(X)\rtimes_\alpha G$ is a simple ASH algebra of topological dimension at most $2$.
\end{thmintro}
This considerably extends results which previously have only been known for $\IZ$ and $\IZ^m$-actions. We point out that some time after the initial preprint version of this paper was published, Bartels~\cite{Bartels16} independently proved a similar finiteness result for more general actions of virtually nilpotent groups, by using slightly different methods. As a consequence, Theorem~\ref{thmintro:nilpotent-box} can be extended from nilpotent groups to virtually nilpotent groups (cf.\ Theorem~\ref{thm:Bartels-extension}). 

In the case of noncommutative coefficient algebras, we also explore generalizations of some other results from~\cite{HirshbergWinterZacharias14}. Firstly, we investigate the notion of Rokhlin dimension with commuting towers and its relation to another type of important \cstar-algebraic regularity property \textemdash~\emph{$\CD$-stability} for a strongly self-absorbing \cstar-algebra $\CD$. We show that for groups with finite dimensional box spaces, finite Rokhlin dimension with commuting towers allows $\CD$-stability to pass from the coefficient \cstar-algebra to the (twisted) crossed product (see Theorem~\ref{dimrokc sigma D}):

\begin{thmintro}\label{thmintro:D-stability}
	Let $A$ be a separable, $\CD$-stable \cstar-algebra, let $G$ be a countable, residually finite group admitting a box space with finite asymptotic dimension, and let $(\alpha,w): G\curvearrowright A$ be a cocycle action having finite Rokhlin dimension with commuting towers. Then the twisted crossed product $A\rtimes_{\alpha,w} G$ is $\CD$-stable.
\end{thmintro}


Secondly, we examine the genericity of cocycle actions with small Rokhlin dimension on $\CZ$-stable or UHF-stable \cstar-algebras (see Theorem~\ref{generic dimrok UHF} and Theorem~\ref{generic dimrok Z}): 

\begin{thmintro}\label{thmintro:genericity}
	Let $G$ be a discrete, finitely generated and residually finite group and let $A$ be a separable \cstar-algebra. Then among all cocycle actions $(\alpha,w): G\curvearrowright A$,
	\begin{enumerate}
		\item those with Rokhlin dimension $0$ are generic if $A$ tensorially absorbs the universal UHF algebra; 
		\item those with Rokhlin dimension at most $1$ are generic if $A$ tensorially absorbs the Jiang-Su algebra.  
	\end{enumerate}
\end{thmintro}

The paper is organized as follows. In Section~\ref{sec:prelim}, we fix some notations and review the existing Rokhlin dimension theories for finite groups and $\mathbb{Z}^m$. Section~\ref{sec:coarse-geometry} develops the theory for the asymptotic dimension of box spaces from a general point of view, while Section~\ref{sec:nilpotent-box} focuses on its finiteness for finitely generated virtually nilpotent groups. Section~\ref{sec:Rokhlin} discusses the definitions and basic properties of the Rokhlin dimension for residually finite groups. Section~\ref{sec:dimnuc} elaborates its application to the nuclear dimension of crossed products. Sections~\ref{sec:dimrok-amdim} and~\ref{sec:dimBLR} study the amenability dimension and relate it to the Rokhlin dimension, while Section~\ref{sec:nilpotent-dimnuc} applies our theory to free minimal actions of nilpotent groups on finite dimensional metric spaces. Section~\ref{sec:commuting-towers} investigates Rokhlin dimenison with commuting towers and its application to $\CD$-stability. Section~\ref{sec:genericity} establishes the genericity of finite Rokhlin dimension for actions on $\CD$-stable algebras.

\vspace{3mm}

\textbf{Acknowledgements.} The authors would like to express their gratitude to Rufus Willett and Guoliang Yu for some very helpful discussions. Moreover, the authors are grateful to Siegfried Echterhoff, Stuart White and Wilhelm Winter for pointing out a number of inaccuracies in a previous preprint version of this paper.


\section{Preliminaries}\label{sec:prelim}

\begin{nota}
Unless specified otherwise, we will stick to the following notations throughout the paper.
\begin{itemize}
\item $A$ denotes a \cstar-algebra.
\item $\alpha$ denotes a group action on a \cstar-algebra or a compact metric space.
\item $G$ denotes a countable, discrete group.
\item If $M$ is some set and $F\subset M$ is a finite subset, we write $F\fin M$.
\item For $\eps>0$ and $a,b$ in some normed space, we write $a=_\eps b$ as a shortcut for $\|a-b\|\leq\eps$.
\item Assume that ``$\dim$'' is one of the notions of dimension that appear in this paper, and $X$ an object on which this dimension can be evaluated. Then we sometimes use the convenient notation $\dimeins(X) = 1+\dim(X)$.
\end{itemize}
\end{nota}

First we recall the existing notion of Rokhlin dimension for finite group actions and $\IZ^m$-actions on unital \cstar-algebras:

\defi[cf. {\cite[1.1]{HirshbergWinterZacharias14}}] \label{dimrok finite} 
Let $A$ be a unital \cstar-algebra, $G$ a finite group and let $\alpha: G\curvearrowright A$ be an action.
We say that $\alpha$ has Rokhlin dimension $d$, and write $\dimrok(\alpha)=d$, if $d$ is the smallest natural number with the following property:

For all $F\fin A, \eps>0, n\in\IN$, there exist positive contractions 
$(f_g^{(l)})^{l=0,\dots,d}_{g\in G}$ in $A$ satisfying the following properties:
{
\renewcommand{\theenumi}{\alph{enumi}}
\begin{enumerate}
 \item $\displaystyle \eins_A =_\eps \sum_{l=0}^d \sum_{g\in G} f_g^{(l)} $;
 \item $\|f_{g}^{(l)}f_{h}^{(l)}\|\leq\eps$ for all $l=0,\dots,d$ and $g \neq h$ in $G$;
 \item $\alpha_g(f_{h}^{(l)}) =_\eps f_{gh}^{(l)} $ for all $l=0,\dots,d$ and $g,h\in G$;
 \item $\|[f^{(l)}_g, a]\|\leq\eps$ for all $l=0,\dots,d\; , g\in G$ and $a\in F$.
\end{enumerate}
}
If there is no such $d$, we write $\dimrok(\alpha)=\infty$.

\notae When considering actions of $\IZ^m$ in this section, we denote 
\[
B_n=\set{0,\dots,n-1}^m\subset\IZ^m.
\]

\begin{defi}[cf.~{\cite[1.6]{Szabo14}}] \label{dimrok Z^m} 
Let $A$ be a unital \cstar-algebra, and $\alpha: \IZ^m\curvearrowright A$ an action.
We say that $\alpha$ has Rokhlin dimension $d$, and write $\dimrok(\alpha)=d$, if $d$ is the smallest natural number with the following property:

For all $F\fin A, \eps>0, n\in\IN$, there exist positive contractions 
$(f_v^{(l)})^{l=0,\dots,d}_{v\in B_n}$ in $A$ satisfying the following properties:
{\renewcommand{\theenumi}{\alph{enumi}}
\begin{enumerate}
 \item $\displaystyle \eins_A =_\eps \sum_{l=0}^d \sum_{v\in B_n} f_v^{(l)} $;
 \item $\|f_{v}^{(l)}f_{v'}^{(l)}\|\leq\eps$ for all $l=0,\dots,d$ and $v \neq v'$ in $B_n$;
 \item $\alpha_v(f_{w}^{(l)}) =_\eps f_{v+w}^{(l)} $ for all $l=0,\dots,d$ and $v,w\in B_n$;
 \vspace{1mm}\\ (Note that $f_v^{(l)}$ denotes $f_{(v\mod n\IZ^m)}^{(l)}$, whenever $v\notin B_n$.)
 \item $\|[f^{(l)}_v, a]\|\leq\eps$ for all $l=0,\dots,d\; , v\in B_n$ and $a\in F$.
\end{enumerate}
}
If there is no such $d$, we write $\dimrok(\alpha)=\infty$.
\end{defi}

Using central sequences, there is an elegant reformulation of these definitions. We will omit the easy proofs for the sake of brevity\footnote{However, a more general statement is proved in detail in~\ref{quantifier dimrok}.}. Let $\omega$ be a free filter on $\IN$. As usual, 
let $A_{\omega} = \ell^{\infty}(\IN,A)/c_\omega(\IN,A)$ denote the $\omega$-sequence algebra for a \cstar-algebra $A$. $A$ itself embeds into $A_\omega$ as (representatives of) constant sequences. The relative commutant of $A$ inside $A_\omega$ is called the central sequence algebra of $A$ and is denoted $A_\omega\cap A'$. If $\alpha$ is a group action of a discrete group on $A$, component-wise application of $\alpha$ yields well-defined actions on both $A_\omega$ and $A_\omega\cap A'$, which we will both denote by $\alpha_\omega$. In what follows, we will mostly work with $\omega$ being the filter of all cofinite subsets in $\IN$, and in this case we will insert the symbol $\infty$ in these notations.

\begin{lemma} \label{dimrok alternativ} 
Let $A$ be a separable, unital \cstar-algebra.
\begin{enumerate}
\item Let $\alpha: G\curvearrowright A$ be an action of a finite group. Let $d\in\IN$ be a natural number. Then $\dimrok(\alpha)\leq d$ if and only if there are equivariant c.p.c.~order zero maps
\[
\phi_l: (\CC(G), G\text{-shift}) \longrightarrow (A_\infty\cap A', \alpha_\infty)\quad (l=0,\dots,d)
\]
such that $\phi_0(\eins)+\dots+\phi_d(\eins)=\eins$.
\item Let $\alpha: \IZ^m\curvearrowright A$ be an action. 
Let $d\in\IN$ be a natural number. Then $\dimrok(\alpha)\leq d$ if and only if for all $n\in\IN$, there are equivariant c.p.c.~order zero maps
\[
\phantom{---}\phi_l: (\CC(\IZ^m/n\IZ^m), \IZ^m\text{-shift}) \longrightarrow (A_\infty\cap A', \alpha_\infty)\quad (l=0,\dots,d)
\]
such that $\phi_0(\eins)+\dots+\phi_d(\eins)=\eins$.
\end{enumerate}
\end{lemma}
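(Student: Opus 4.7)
The plan is to reduce the statement to a dictionary between equivariant c.p.c.\ order zero maps out of $\CC(H)$ (for a finite group $H$) and families of pairwise orthogonal positive contractions in the target algebra, and then to apply the routine technique of trading approximate algebraic relations in $A$ for exact relations in the sequence algebra quotient $A_\infty \cap A'$.

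First, I would observe the following dictionary: for any finite set $H$ and any \cstar-algebra $B$, every c.p.c.\ order zero map $\phi\colon \CC(H) \to B$ is completely determined by the pairwise orthogonal positive contractions $e_h := \phi(\delta_h)$, and conversely any such family defines a unique such map. If $H$ is a group acting on $\CC(H)$ by shift and on $B$ via some action $\beta$, then equivariance of $\phi$ translates exactly to $\beta_g(e_h) = e_{gh}$, and $\phi(\eins) = \sum_h e_h$. Thus $d+1$ equivariant c.p.c.\ order zero maps $\phi_0, \ldots, \phi_d$ with $\sum_l \phi_l(\eins) = \eins$ correspond to a family $(e_h^{(l)})_{h\in H}^{l=0,\ldots,d}$ of positive contractions in $B$ satisfying the exact versions of conditions (a)--(c). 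For part (1) one takes $H = G$; for part (2) one takes $H = \IZ^m/n\IZ^m$ and lets $n$ vary.

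For the forward direction, assuming $\dimrok(\alpha) \leq d$, I would pick an increasing exhaustive sequence $F_k \fin A$ with dense union and apply the definition with parameters $(F_k, 1/k, n)$ (with $n$ arbitrary in case (1) or prescribed in case (2)) to obtain approximate towers $(f_h^{(l,k)})$. Setting $e_h^{(l)} := [(f_h^{(l,k)})_k] \in A_\infty$, condition (d) places these elements in $A_\infty \cap A'$, while (a)--(c) upgrade to exact equalities in the quotient, because any sequence of positive contractions whose product has norm tending to zero represents zero. The dictionary above then produces the desired maps $\phi_l$.

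For the backward direction, I would set $e_h^{(l)} := \phi_l(\delta_h)$ and lift to bounded sequences $(f_h^{(l,k)})_k$ of positive contractions via the canonical surjection $\ell^\infty(\IN,A) \to A_\infty$, using functional calculus to arrange positivity of the lift. The exact relations holding in $A_\infty \cap A'$ translate immediately into the corresponding approximate relations on the representing sequences, so for any prescribed $(F, \eps, n)$ a valid Rokhlin tower is obtained for $k$ large enough. The whole argument is routine; the only small point of care is the transfer between exact pairwise orthogonality in the quotient and approximate orthogonality in $A$, which is a standard move also at the heart of the more general result proved in detail later in the paper.
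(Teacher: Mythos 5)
Your proposal is correct and follows essentially the same route as the paper: the paper omits the proof of this lemma but proves the more general Proposition \ref{quantifier dimrok} by exactly this combination of the dictionary between equivariant order zero maps on $\CC(G/H)$ and orthogonal towers, together with the standard passage between approximate relations in $A$ and exact relations in the (relative) central sequence algebra, which in the separable unital case reduces to $A_\infty\cap A'$.
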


\begin{rem} The above perspective~\ref{dimrok alternativ} about the existing notions of finite Rokhlin dimension makes the similarities a lot more apparent. Regarding (2), it is not immediately clear why one would have to use the sequence of subgroups $n\IZ^m$, instead of any other suitable separating sequence of finite-index subgroups. When we define Rokhlin dimension for residually finite groups in the upcoming sections, we will introduce a little more flexibility concerning this point.

We note that for integer actions, there have originally been several versions of Rokhlin dimension, see~\cite[2.3]{HirshbergWinterZacharias14}. The above definition for $\IZ^m$-actions~\cite[1.6]{Szabo14} extends the single tower version for $\IZ$-actions, see~\cite[2.3c), 2.9]{HirshbergWinterZacharias14}.
\end{rem}

As we will also treat cocycle actions in this paper, we remind the reader of some basic definitions:

\begin{defi}[cf.~{\cite[2.1]{BusbySmith70} or~\cite[2.1]{PackerRaeburn89}}] 
\label{cocycle actions} 
Let $A$ be a \cstar-algebra and $G$ a discrete group. A cocycle action $(\alpha,w): G\curvearrowright A$ is a map $\alpha: G\to\Aut(A), g\mapsto\alpha_g$ together with a map $w: G\times G\to\CU(\CM(A))$ satisfying
\[
\alpha_r\circ\alpha_s = \ad(w(r,s))\circ\alpha_{rs}
\]
and
\[
\alpha_r(w(s,t))\cdot w(r,st) = w(r,s)\cdot w(rs,t)
\]
for all $r,s,t\in G$.
\end{defi}

One may always assume $\alpha_1 = \id$ and $w(1,t)=w(t,1)=\eins$ for all $t\in G$. If $w=\eins$ is trivial, then this just recovers the definition of an ordinary action $\alpha: G\curvearrowright A$. 

\begin{defi}[cf.~{\cite{PackerRaeburn89}}] 
\label{twisted crossed product}
Let $A$ be a \cstar-algebra and $G$ a discrete group. Let $(\alpha,w): G\curvearrowright A$ be a cocycle action. Then one defines the maximal twisted crossed product $A\rtimes_{\alpha,w} G$ to be the universal \cstar-algebra with the property that it contains a copy of $A$, there is a map $G\to\CU(\CM(A\rtimes_{\alpha,w} G)), g\mapsto u_g$ satisfying
\[
u_g a u_g^* = \alpha_g(a)\quad\text{and}\quad u_gu_h = w(g,h)\cdot u_{gh}
\]
for all $a\in A$ and $g,h\in G$.

The reduced twisted crossed product $A\rtimes_{r,\alpha,w} G$ is defined as the \cstar-algebra inside $\CB(\ell^2(G)\otimes H)$ generated by the following two representations: 

Let $A\into\CB(H)$ be faithfully represented on a Hilbert space. Then consider
\[
\iota: A\to\CB(\ell^2(G)\otimes H)\quad\text{via}\quad \iota(a)(\delta_t\otimes\xi) = \delta_t\otimes \alpha_{t^{-1}}(a)\xi
\]
and
\[
\lambda: G\to\CU(\CB(\ell^2(G)\otimes H))\quad\text{by}\quad \lambda_s(\delta_t\otimes \xi) = \delta_{st}\otimes w(t^{-1}s^{-1},s)\xi.
\]
If $G$ is amenable, then the maximal and reduced twisted crossed products always coincide.
\end{defi}

\rem \label{leftregular} In a more symbolic notation using $A$-valued infinite matrices (that we will use later) one can also write
\[
au_s = \sum_{t\in G} e_{t,s^{-1}t}\otimes \alpha_{t^{-1}}(a)w(t^{-1},s) \quad\in A\rtimes_{r,\alpha,w} G 
\]
for all $a\in A, s\in G$.


\section{Box spaces and asymptotic dimension}\label{sec:coarse-geometry}

\defi Let $G$ be a countable, discrete group. Let $G_n \leq G$ be a decreasing sequence of subgroups with finite index, i.e.\ $[G:G_n]<\infty$ and $G_{n+1} \leq G_n$ for all $n\in\IN$. We say that the sequence $(G_n)_n$ is separating if $\bigcap_{n\in\IN} G_n = \set{1_G}$. $G$ is (by definition) residually finite if and only if such a sequence exists.

\begin{rem} A well-known fact in group theory is that for any finite-index subgroup $H \leq G$, there is a normal finite-index subgroup $N \leq H$ such that $[G:N] \leq [G:H]!$. It follows that a group $G$ is residually finite if and only if it has a separating decreasing sequence of normal finite-index subgroups. 
\end{rem}

Before we come to the next definition, we need to recall some facts. If $(X,d)$ is a metric space and we have a discrete group $G$ acting on $X$ from the right, then the orbit space $X/G$ is defined as the quotient of $X$ by identifying any two points in the same orbit. Let $\pi : X \to X/G$ be the corresponding quotient map. One can define a quotient pseudometric by 
\[
\pi_*(d) ( \pi (x_1), \pi (x_2)) = \inf \{ d(x_1\cdot g_1, x_2\cdot g_2) \mid g_1, g_2 \in G \}.
\]
In good cases, this will be a metric inducing the quotient topology, such as for geometric actions, i.e.~isometric, properly discontinuous and cobounded actions, see~\cite[3.20]{DrutuKapovich}.

Recall also the definition of a coarse disjoint union of finite metric spaces~\cite[1.4.12]{NowakYuLSG}. Moreover, recall that a metric on a discrete set is called proper, if balls are finite sets.

{\defi[following {\cite[11.24]{RoeCG}} and~\cite{Khukhro12}]\label{defi:box-space} Let $G$ be a countable, discrete, residually finite group and let $\sigma=(G_n)_n$ be a decreasing sequence of subgroups of $G$. Let us equip $G$ with a proper, right-invariant metric $d$.  For each $n \in \IN$, let $\pi_n : G \to G/ G_n$ be the quotient map and $(\pi_n)_* (d)$ the quotient metric on $G / G_n$. Then the box space $\square_\sigma G$ associated to $\sigma$ is defined as the coarse disjoint union of the sequence of finite metric spaces $(G / G_n , (\pi_n)_* (d))$. More precisely, it is a coarse space whose underlying set is the disjoint union $\bigsqcup_{n\in\IN} G/G_n$, and whose coarse structure is determined by a metric $d^B$ such that for all $n$, the metric $d^B$ restricts to $(\pi_n)_*(d)$ on the subset $G/G_n$, and such that we have 
\[
\dist_{d^B}(G/G_n, G/G_m) \to \infty
\] 
as $n\neq m$ and $n+m\to\infty$. 
}

\rem Box spaces associated to a family of normal finite-index subgroups have seen a fair amount of study. It is known to coarse geometers that any countable, discrete group admits a proper right-invariant metric, and that the choice of the proper right-invariant metric $d$ on $G$ and the metric $d^B$ does not affect the coarse equivalence class of the resulting box space. This justifies the absence of $d$ and $d^B$ in the notation $\square_\sigma G$. For a rigorous and comprehensive proof of this, we refer the reader to~\cite[Section 1.2, Proposition 1.3.3]{Szabo_Diss}. We remark that even though in the statement of~\cite[Proposition 1.3.3]{Szabo_Diss}, the sequence $\sigma$ is assumed to be separating and made up of normal subgroups, the actual proof does not make use of these conditions. 

Although for us, $\sigma$ is not required to consist of normal subgroups, one needs a condition that rules out certain pathological examples where the coarse structure of the box space $\square_\sigma G$ behaves badly with regard to that of $G$ itself.

{\defi[cf.\ {\cite[3.1]{FinnSellWu14}}] \label{defi:semi-conj-separating} 
A decreasing sequence $\sigma$ of finite-index subgroups of $G$ is called \emph{semi-conjugacy-separating} if for any nontrivial element $g \in G$, there is $H \in \sigma$ such that $H \cap g^G H = \varnothing$, where $g^G$ is the conjugacy class of $g$.
}

{
\nota \label{rfa}
For the sake of brevity, we shall call a semi-conjugacy-separating decreasing sequence of finite-index subgroups of $G$ a \emph{regular residually finite approximation} of $G$ or simply a \emph{regular approximation
}. 

{
\lemma \label{lem:semi-conj-separating}
 Let $G$ be a group and $\sigma$ a decreasing sequence of finite-index subgroups of $G$. Then the following are equivalent:
 \begin{enumerate}
  \item \label{lem:semi-conj-item1} $\sigma$ is semi-conjugacy-separating.
  \item \label{lem:semi-conj-separating:trivial-intersection} $\bigcap_{H \in \sigma} \bigcup_{k \in G} k H k^{-1} = \{1\}$.
  \item \label{lem:semi-conj-separating:unbounded-injective-radii} For any finite subset $F \subset G$, there is $H \in \sigma$ such that the quotient map $G \to G / H$ is injective on $F \cdot k$ for any $k \in G$.
 \end{enumerate}
}

\begin{proof}
 (1) $\Leftrightarrow$ (2): Condition \eqref{lem:semi-conj-separating:trivial-intersection} can be rewritten as: for any nontrivial element $g \in G$, there is $H \in \sigma$ such that for any $k \in G$, $g \not \in k H k^{-1}$. But the statement at the end is equivalent to $k^{-1} g k \not \in H$, which is in turn equivalent to $k^{-1} g k H \cap H = \varnothing$. Thus the entire statement is equivalent to \eqref{lem:semi-conj-item1}.
 
(2) $\Rightarrow$ (3): Given any finite subset $F \subset G$ and $g_1, g_2 \in F$ with $g_1 \neq g_2$, then $g_1 ^{-1} g_2 \neq 1$, so by \eqref{lem:semi-conj-separating:trivial-intersection} we can find $H \in \sigma$ such that  $g_1 ^{-1} g_2 \not \in kH k ^{-1} $ for all $k \in G$. With $\sigma$ decreasing and $F$ finite, we can find $H \in \sigma$ which works for all pairs of different elements in $F$ simultanously, thus verifying (3).

(3) $\Rightarrow$ (2): With $F$ and $H$ as in condition \eqref{lem:semi-conj-separating:unbounded-injective-radii}, it follows that $F^{-1} F  \cap  k H k^{-1} = \{1\}$ for all $k \in G$, which implies condition \eqref{lem:semi-conj-separating:trivial-intersection}.
\end{proof}

\begin{rem} Although we are only interested in decreasing sequences, Definition~\ref{defi:semi-conj-separating} and Lemma~\ref{lem:semi-conj-separating} work equally well for any collection of finite-index subgroups of $G$ that is closed under forming intersections.
\end{rem}

After we fix a proper, right-invariant metric on $G$ and replace $F$ by arbitrarily large balls around $1_G$, the last condition in the above Lemma says that the family of maps $\{ G \to G / H \}_{H \in \sigma}$ achieves arbitrarily large injectivity radii. This suggests the importance of this condition regarding the coarse geometric information of the box space and the group.
On the other hand, condition \eqref{lem:semi-conj-separating:trivial-intersection} directly implies the following. 

\begin{cor}
 Every separating family of finite-index normal subgroups is semi-conjugacy-separating. 
\end{cor}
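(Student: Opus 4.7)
The plan is to reduce the corollary to condition \eqref{lem:semi-conj-separating:trivial-intersection} of Lemma \ref{lem:semi-conj-separating}, which characterizes the semi-conjugacy-separating property as the equality
\[
\bigcap_{H \in \sigma} \bigcup_{k \in G} kHk^{-1} = \{1_G\}.
\]
The key observation is that for a \emph{normal} subgroup $H \trianglelefteq G$ one has $kHk^{-1} = H$ for every $k \in G$, so the inner union over conjugates collapses to $H$ itself. Consequently, when every member of $\sigma$ is normal, the double intersection above becomes simply $\bigcap_{H \in \sigma} H$, which equals $\{1_G\}$ precisely by the separating assumption.

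If one prefers to avoid citing the lemma (or if one worries about the ``decreasing'' versus ``family'' distinction, since the remark right before the corollary allows for families closed under intersections but the lemma is phrased for decreasing sequences), the result can also be read off directly from Definition \ref{defi:semi-conj-separating}. Given a nontrivial $g \in G$, separating yields some $H \in \sigma$ with $g \notin H$. To check $H \cap g^G H = \varnothing$, suppose for contradiction that $h = kgk^{-1} h'$ for some $k \in G$ and $h, h' \in H$. Then $kgk^{-1} = h (h')^{-1} \in H$; normality of $H$ forces $g \in H$, contradicting the choice of $H$.

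There is essentially no obstacle here: the whole statement rests on the single algebraic fact that normal subgroups equal all their conjugates, so the ``semi-conjugacy'' in the definition becomes vacuous and reduces to the ordinary separating condition $\bigcap_H H = \{1_G\}$. The only thing worth being slightly careful about is confirming that the use of Lemma \ref{lem:semi-conj-separating} is legitimate in the setting of the corollary, but in either the decreasing-sequence case or the more general setting covered by the preceding remark, the argument is the same.
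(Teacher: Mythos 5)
Your proof is correct and follows the same route as the paper, which simply observes that condition (2) of Lemma \ref{lem:semi-conj-separating} collapses to $\bigcap_{H\in\sigma} H = \{1\}$ when every $H$ is normal. The extra direct verification from the definition is fine but not needed.
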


Since a group is residually finite if and only it has a separating family of finite-index \emph{normal} subgroups, we also have:
 
\begin{cor}\label{cor:semi-conj-separating-exists} 
 A group is residually finite if and only if it has a semi-conjugacy-separating family of finite-index subgroups.
\end{cor}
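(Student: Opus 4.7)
The plan is to derive both directions essentially from what is already available: the previous corollary handles one direction, and a short unpacking of Definition \ref{defi:semi-conj-separating} (or of Lemma \ref{lem:semi-conj-separating}\eqref{lem:semi-conj-separating:trivial-intersection}) handles the other.

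For the ($\Rightarrow$) direction, I would simply invoke the remark at the start of the section stating that any residually finite $G$ admits a separating decreasing sequence of finite-index \emph{normal} subgroups. The preceding corollary immediately turns such a family into a semi-conjugacy-separating one, and we are done.

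For the ($\Leftarrow$) direction, I would argue that semi-conjugacy-separation is strictly stronger than separation. Namely, if $\sigma$ is a semi-conjugacy-separating family, then Lemma \ref{lem:semi-conj-separating}\eqref{lem:semi-conj-separating:trivial-intersection} gives
\[
\bigcap_{H \in \sigma} \bigcup_{k \in G} k H k^{-1} = \{1\}.
\]
Since $H \subseteq \bigcup_{k\in G} kHk^{-1}$ for each $H \in \sigma$, taking the intersection over $\sigma$ yields $\bigcap_{H \in \sigma} H = \{1\}$. In other words, for every nontrivial $g \in G$ there is some finite-index subgroup $H \in \sigma$ not containing $g$, which is precisely the definition of residual finiteness.

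There is no real obstacle here: the content has been entirely absorbed into Lemma \ref{lem:semi-conj-separating} and the preceding corollary, so this final corollary amounts to recording the logical equivalence between the existence of a semi-conjugacy-separating family and residual finiteness. The only step worth flagging is the trivial observation $H \subseteq \bigcup_k kHk^{-1}$, which bridges condition \eqref{lem:semi-conj-separating:trivial-intersection} of Lemma \ref{lem:semi-conj-separating} with the usual separation condition characterizing residual finiteness.
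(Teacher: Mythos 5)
Your proof is correct and matches the paper's (implicit) argument exactly: the forward direction via the separating family of normal finite-index subgroups and the preceding corollary, and the backward direction via condition (2) of Lemma \ref{lem:semi-conj-separating} together with the inclusion $H \subseteq \bigcup_{k} kHk^{-1}$. The paper leaves this unproved as an immediate consequence, and your write-up supplies precisely the intended reasoning.
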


The condition of having large injectivity radii is relevant to the coarse geometry of the box spaces, because here it actually implies the generally stronger condition of having large isometry radii, as shown in the next Lemma: 

{
\lemma \label{lem:injectivity-radius-isometry-radius}
 Let $(X, d_X)$ be a metric space with an isometric right-action of a group $H$, let $(Y, d_Y)$ be the quotient space by the group action with the induced pseudo-metric, and let $\pi: X \to Y$ be the corresponding projection. Let $x \in X$ and $R> 0$ be such that $\pi$ is injective when restricted to $B_{3R}(x)$. Then $\pi|_{B_{R}(x)}$ is an isometry from $B_{R}(x)$ onto $B_R(\pi(x))$.
}
\begin{proof}
 For any $x_1, x_2 \in B_{R}(x)$ and $h_1, h_2 \in H$ with $h_1 \not= h_2$, we have 
\[
d_X(x_1 \cdot h_1, x_2 \cdot h_1) = d_X(x_1 , x_2 ) \leq 2R,
\] 
while $d_X(x_1 \cdot h_1, x_2 \cdot h_2) = d_X(x_1 , x_2 \cdot h_2 h_1^{-1}) > 2R$ because $x_2 \cdot h_2 h_1^{-1}$, being different from but having the same image under $\pi$ as $x_2$, must lie outside $B_{3R}(x)$. It follows that 
\[
d_Y(\pi(x_1) , \pi(x_2) ) = \inf_{h_1, h_2 \in H} d_X(x_1 \cdot h_1, x_2 \cdot h_2) = d_X(x_1 , x_2 ).
\]
 
 On the other hand, for any $y \in B_R(\pi(x))$, we may pick some $x'\in \pi^{-1}(y)$. As 
\[
R \geq d_X(\pi(x), y) = \inf_{h, h' \in H} d_X(x \cdot h, x' \cdot h') = \inf_{h, h' \in H} d_X(x, x' \cdot h' h^{-1}),
\]
we see that $x' \cdot h' h^{-1} \in \pi^{-1}(y) \cap B_{R}(x)$ and thus $\pi|_{B_{R}(x)}$ is onto $B_R(\pi(x))$. This completes the proof. 
\end{proof}

Our main use of this Lemma will be to perform lifts of uniformly bounded covers, a crucial step in the proof of the upcoming main Lemma of this section.

\rem \label{finite_asdim_box_space_implies_amenability} It is well-known that the box spaces of $G$, as coarse metric spaces, encode important properties of $G$. 
For instance, $\square_\sigma G$ has property A if and only if $G$ is amenable (see~\cite[11.39]{RoeCG} and~\cite[4.4.6]{NowakYuLSG})\footnote{Although these authors only studied box spaces associated to families of \emph{normal} subgroups, we remark that the proof of this result requires only a condition of large injectivity radii, as formulated in the previous Lemmas, whence the proofs also work for semi-conjugacy-separating families.}. 
For us, the finiteness of asymptotic dimension is the more relevant condition. Let us briefly recall the relevant definition:\ the asymptotic dimension of a metric space $(X,d)$ is defined to be the smallest non-negative integer $n$ such that for every $R>0$ there exists a uniformly bounded cover  $\CV = \CV^{(0)} \cup \dots \cup \CV^{(n)} $ of $X$ with Lebesgue number $R$ such that each $\CV^{(l)}$ consists of pairwise disjoint sets (one may even assume $R$-disjointness, meaning that any two different sets in $\CV^{(l)}$ have distance bounded below by $R$). 
Finiteness of asymptotic dimension always implies property A, see~\cite[4.3.6]{NowakYuLSG}; hence in the case of box spaces, it also implies the amenability of $G$. 
When a box space has finite asymptotic dimension, one might be tempted to think that this value encodes the complexity of the group in some sense. This is demonstrated in the main technical result of this section:

{
\lemma \label{decay functions}
Let $G$ be a residually finite group. Consider the action of $G$ on itself by right multiplication and fix a proper, right-invariant metric. Let $\sigma=(G_n)_n$ be a decreasing sequence of finite-index subgroups of $G$.
 Then the following conditions are equivalent for all $s\in\IN$:
\begin{enumerate}
  \item $\sigma$ is semi-conjugacy-separating and the box space $\square_\sigma G$ has asymptotic dimension at most $s$.
  \item For any $R>0$, there exists $n$ and a uniformly bounded cover $\CU = \CU^{(0)} \cup \dots \cup \CU^{(s)} $ of $G$ with Lebesgue number at least $R$, such that each $\CU^{(l)} $ is $G_n$-invariant (with regard to the right multiplication) and has mutually disjoint members.
  \item For every $\eps>0$ and $M\fin G$, there exists $n$ and finitely supported functions $\mu^{(l)}: G\to [0,1]$ for $l=0,\dots,s$ satisfying the following properties:
	\begin{enumerate}
	\item For every $l=0,\dots,s$,  one has
\[
\supp(\mu^{(l)})\cap\supp(\mu^{(l)})h=\emptyset\quad\text{for all}~h\in G_n\setminus\set{1}.
\]
	\item For every $g\in G$, one has
\[
\sum_{l=0}^s \sum_{h\in G_n}  \mu^{(l)}(gh)=1.
\]
	\item For every $l=0,\dots,s$ and $g\in M$, one has 
\[
\|\mu^{(l)}-\mu^{(l)}(g\cdot\_\!\_)\|_\infty\leq\eps.
\]
	\end{enumerate}
\end{enumerate}
}

In the style of \cite[Section 2]{HirshbergWinterZacharias14}, we will refer to a system $\set{\mu^{(l)}}_{l=0,\dots,s}$ of functions satisfying the properties in part (3) of Lemma \ref{decay functions} as a system of decay functions.

\reme Compared with the definition of asymptotic dimension, condition (2) can be viewed as a periodic variant of asymptotic dimension. On the other hand, condition (3) reflects the standard fact that covers with large Lebesgue number give rise to very flat partitions of unity, but again in a periodic way.

\begin{proof}[Proof of~\ref{decay functions}]
For the entire proof, let us fix the following notation: For each $n\in\IN$, denote by $\pi_n: G\to G/G_n$ the quotient map. Let $d$ be a proper, right-invariant metric on $G$ and let $d^B$ be an induced metric on $\square_\sigma G = \bigsqcup_{n\in\IN} G/G_n$ as specified in Definition~\ref{defi:box-space}.

$(1)\implies (2):$ Given $R>0$, there is a cover $\CV = \CV^{(0)} \cup \dots \cup \CV^{(s)} $ of $\square_\sigma G$ with Lebesgue number at least $R$, such that the diameters of members of $\CV$ are uniformly bounded by some $R'\geq R$ and each $\CV^{(l)} $ has mutually disjoint members. Let $\CV' = \CV'^{(0)} \cup \dots \cup \CV'^{(s)} $ denote the induced finite cover on the finite subspace $G/G_n \subset \square_\sigma G$ for some $n$.

Applying Lemma~\ref{lem:semi-conj-separating}(3), we may choose $n \in \IN$ such that for any $g \in G$, $\pi_n$ is injective when restricted to $B_{3R'}(g) = B_{3R'}(1_G)g$. It then follows from Lemma~\ref{lem:injectivity-radius-isometry-radius} that $\pi_n$ restricts to an isometry between $B_{R'}(g)$ and $B_{R'}(\pi_n(g))$ for any $g \in G$. Applying Lemma~\ref{lem:semi-conj-separating}(2), let us also assume that
\[
B_{3R'}(1)\cap \bigcup_{k\in G} kG_nk^{-1} = \set{1}.
\]

By uniform boundedness, each $V\in\CV'$ is contained in $B_{R'}(\pi_n(g))$ for some $g \in G$, and thus we may find $U_V \subset B_{R'}(g)$ that is mapped isometrically onto $V$ under $\pi_n$. Hence $\pi_n^{-1}(V)$ can be written as a disjoint union $\bigsqcup_{h\in G_n} U_V \cdot h$. Define 
\[
\CU^{(l)} := \set{ U_V \cdot h ~|~ V \in \CV'^{(l)},\ h \in G_n }
\]
for all $l = 0,\dots,s$. Then each $\CU^{(l)} $ is $G_n$-invariant (with regard to multiplication from the right), uniformly bounded and has mutually disjoint members. Moreover, we claim that $\CU = \CU^{(0)} \cup \dots \cup \CU^{(s)} $ covers $G$ with Lebesgue number at least $R$. Indeed, Let $X \subset G$ be a subset with diameter less than $R$. Since the cover $\CV'$ of $G / G_n$ has Lebesgue number at least $R$ and $\pi_n$ is contractive, there is some $V \in \CV'$ such that $\pi_n(X) \subset V$, whence 
\[
 X \subset \pi_n^{-1}(\pi_n(X)) \subset \pi_n^{-1}(V) = \bigsqcup_{h\in G_n} U_V\cdot h \; .
\]

By construction, the set $U_V$ has diameter at most $R$, and the subgroup $G_n$ satisfies the equation
\[
B_{3R'}(1)\cap \bigcup_{k\in G} kG_nk^{-1} = \set{1}.
\] 
Thus, for $h_1\neq h_2$ in $G_n$ and $g_1,g_2\in U_V$, we have
\[
d(g_1h_1, g_2h_2) = d(g_1h_1h_2^{-1}g_1^{-1}, g_2g_1^{-1}) > d(g_1h_1h_2^{-1}g_1^{-1},1_G)-2R \geq R.
\]
So we get $\dist_d(U_V\cdot h_1,U_V\cdot h_2) > R$. But this implies that $X$ must be entirely contained in $U_V\cdot h\in\CU$ for some $h\in G_n$, which shows the claim. 

$(2)\implies (3):$ Let $\eps>0$ and $M\fin G$ be given.
Choose $R > \frac{2(2s+3)}{\eps}$ big enough so that $M$ is contained in $B_R(1_G)$. By assumption, there exists $n$ and a uniformly bounded cover $\CU = \CU^{(0)} \cup \dots \cup \CU^{(s)} $ of $G$ with Lebesgue number at least $R$, such that each $\CU^{(l)} $ is $G_n$-invariant and has mutually disjoint members. For each $l=0,\dots,s$, upon combining several members of $\CU^{(l)}$ into one, we may assume that $\CU^{(l)}$ is of the form $\set{ U^{(l)} \cdot h ~|~ h \in G_n }$ for some (necessarily finite) set $U^{(l)} \subset G$ such that $ U^{(l)} \cap \left( U^{(l)} \cdot h \right) = \emptyset $ for any $h \in G_n\setminus\set{1}$. Now define 
\[
\mu^{(l)} : G \to [0,1],~ g \mapsto \frac{\dist_d(g, G\setminus U^{(l)})}{ \sum_{V\in\CU} \dist_d(g, G\setminus V) } .
\]
It is easy to see that $\supp(\mu^{(l)}) = U^{(l)} $ and $\set{ \mu^{(l)}(\_\!\_\cdot h) ~|~ h\in G_n ,~ l=0,\dots,s }$ forms a partition of unity for $G$. This proves properties (a) and (b). By~\cite[4.3.5]{NowakYuLSG}, each $\mu^{(l)}$ is Lipschitz with constant $\frac{2(2s+3)}{R} \leq \eps$, i.e. (c) is satisfied.

$(3)\implies(1):$ Given $R>0$, pick $\eps>0$ with $\eps<\frac{1}{(s+1)}$. Choose $n$ and finitely supported functions $\mu^{(l)} : G \to [0,1]$ for $l=0,\dots,s$ satisfying the requirements of (3) with respect to the pair $\eps$ and $M=B_R(1_G)$. By choosing $n$ large enough, we may also assume that the distance between any two of the subsets $\bigcup_{k=1}^{n-1} G/G_k$ and $G/G_m$ for $m\geq n$ is at least $R$ in $\square_\sigma G$. 
Define $U^{(l)} := \supp(\mu^{(l)})$. Then $U^{(l)} \cap \left( U^{(l)} \cdot h \right) = \emptyset$ for all $l = 0,\dots,s$ and $h \in G_n \setminus\set{1}$, and $\set{ U^{(l)} \cdot h ~|~ h \in G_n ,~ l=0,\dots,s}$ covers $G$. 

We claim that for any $g \in G$, there are $l \in \{0,\dots,s \}$ and $h \in G_n$ such that $B_{R}(g)\subset U^{(l)}h$. Indeed, since $g$ is in the support of at most $(s+1)$ members of the partition of unity $\set{ \mu^{(l)}(\_\!\_\cdot h) ~|~ h\in G_n ,~ l=0,\cdots,s }$ (at most one for each $l$), it follows that there exist $h\in G_n$ and $l\in \{0,\dots,s \}$ such that $\mu^{(l)}(g\cdot h^{-1}) > \frac{1}{s+1}$.  Thus for any $g'\in B_R(g)$, 
\[
\mu^{(l)}(g'\cdot h^{-1}) \geq \mu^{(l)}(g\cdot h^{-1}) - \eps > \frac{1}{s+1} - \frac{1}{s+1} =0.
\]
This shows that $B_R(g) \subset U^{(l)} \cdot h$. 

Note that in particular, this claim implies that $\pi_n$ is injective when restricted to $B_R(g)$. Since $R$ and $g$ are arbitrary and $n$ only depends on $R$, it follows from Lemma~\ref{lem:semi-conj-separating}(3) that $\sigma$ is semi-conjugacy-separating. 

Now for each $m \geq n$, define the collection $\CV^{(l)}_m := \set{ \pi_m (U^{(l)} \cdot h) ~|~ h \in G_n }$, which consists of $[G_n : G_m]$ disjoint subsets of $G/G_m$. Define a cover $\CV = \CV^{(0)} \cup \dots \cup \CV^{(s)} $ of $\square_\sigma G$ by 
\[
\CV^{(0)} := \set{ \bigcup_{k=1}^{n-1} G/G_k } \sqcup \bigcup_{m=n}^{\infty} \CV^{(0)}_m ~\text{and}~ \CV^{(l)} := \bigcup_{m=n}^{\infty} \CV^{(l)}_m\quad\text{for}~l=1,\dots,s.
\]
The diameters of its members are bounded by 
\[
\max \set{ \diam\Big( \bigcup_{m=1}^{n-1} G/G_m \Big), \diam(U^{(0)}), \dots, \diam(U^{(s)}) }
\]
and each $\CV^{(l)}$ consists of disjoint sets. Finally let us show that its Lebesgue number is at least $R$. Given any non-empty subset $X \subset \square_\sigma G$ with radius at most $R$, if we fix any $x \in X$, then $X \subset B_R(x)$. By our choice of $n$, we know that $B_R(x)$ falls entirely in one of the subsets $\bigcup_{k=1}^{n-1} G/G_k$ or $G/G_m$ for $m\geq n$. In the first case, $B_R(x) \subset \bigcup_{k=1}^{n-1} G/G_k \in \CV^{(0)}$. In the case where $B_R(x) \subset G/G_m$ for $m\geq n$, choose some $g \in G$ such that $\pi_m(g) = x$. By the claim we proved above, there are $l \in \{0,\dots,s \}$ and $h \in G_n$ such that $B_{R}(g)\subset U^{(l)}h$, whence $X \subset B_R(x) \subset \pi_m(U^{(l)} \cdot h) \in  \CV^{(l)}$. Therefore $\CV = \CV^{(0)} \cup \dots \cup \CV^{(s)}$ is a uniformly bounded cover of $\square_\sigma G$ with Lebesgue number at least $R$ and each $\CV^{(l)}$ made up of disjoint members, which shows that $\asdim(\square_\sigma G)\leq s$.
\end{proof}

{
\cor \label{asdim asdimbox}
Let $G$ be a countable, discrete, residually finite group and $\sigma=(G_n)_n$ a regular approximation. Let $G$ be equipped with some proper, right-invariant metric, which gives it the structure of a coarse metric space.
Then $\asdim(G)\leq\asdim(\square_\sigma G)$.
}
\begin{proof}
This follows immediately from~\ref{decay functions}(2).
\end{proof}

{
\cor \label{box subgroup}
Let $G$ be a countable, discrete, residually finite group and $\sigma=(G_n)_n$ a regular approximation. Let $H\subset G$ be a subgroup. Then 
\begin{enumerate}
 \item $\kappa=(H\cap G_n)_n$ defines a regular approximation of $H$;
 \item we have $\asdim(\square_\kappa H) \leq \asdim(\square_\sigma G)$;
 \item if $H\subset G$ has finite index, then $\square_\kappa H$ and $\square_\sigma G$ are coarsely equivalent, and in particular $\asdim(\square_\kappa H) =\asdim(\square_\sigma G)$.
\end{enumerate}
}
\begin{proof}
 To prove (1), we observe that for any $n \in \IN$, we have a natural embedding $H / (H\cap G_n) \subset G / G_n$, and for any $h \in H$, we have $h^H \subset h^G$. Consequently $\kappa$ is also semi-conjugacy-separating by definition.
 
 Statement (2) follows directly from~\ref{decay functions}(2) by restricting the obtained cover to $H$.
 
 As for (3), we note that given any proper, right-invariant metric $d$ on $G$, the finite-index condition implies that there exists $R>0$ such that $H$ is an $R$-net in $G$: for any $g \in G$, there exists $h \in H$ such that $d(g, h) \leq R$. It follows that as quotient metric spaces, $H / (H\cap G_n) \subset G / G_n$ is an $R$-net for every $n \in \IN$. Fixing a concrete metric space realization of $\square_\sigma G$, we may realize $\square_\kappa H$ as the metric subspace $\bigsqcup_{n\in \IN} H / (H\cap G_n) \subset \bigsqcup_{n\in \IN} G / G_n$, which is then also an $R$-net. Since an $R$-net is always coarsely equivalent to the ambient space, the assertion is then a consequence of the coarse invariance of asymptotic dimension. 
\end{proof}

\begin{defi} 
\label{domi}
Let $G$ be a countable, discrete and residually finite group. The set of all decreasing sequences of finite-index subgroups of $G$ carries a natural preorder: we say $(H_n)_n$ dominates  $(G_n)_n$, and write $(G_n)_n\precsim (H_n)_n$, if for all $n\in\IN$, there exists $m\in\IN$ with $H_m\subset G_n$. We say that two sequences are order equivalent, and write $(G_n)_n\sim (H_n)_n$, if $(G_n)_n\precsim (H_n)_n$ and $(H_n)_n\precsim (G_n)_n$. 

We denote by $\Lambda(G)$ the set of all regular approximations (cf.~\ref{rfa}) of $G$. It follows from Lemma~\ref{lem:semi-conj-separating}(2) that $\Lambda(G)$ is upward closed: if $(G_n)_n\in\Lambda(G)$ and $(G_n)_n\precsim (H_n)_n$, then $(H_n)_n\in\Lambda(G)$. 

We call a regular approximation $(G_n)_n\in\Lambda(G)$ dominating, if it is dominating with respect to the above order, i.e.~$(H_n)_n\precsim (G_n)_n$ for all $(H_n)_n\in\Lambda(G)$. This is the case if and only if, for every finite-index subgroup $H\subset G$, there exists $n$ such that $G_n\subset H$. 
\end{defi}


{\example Let us consider $G=\IZ$ from the point of view of the above definition. Every element in $\Lambda(\IZ)$ has the form $(k_n\IZ)_n$ for an increasing sequence of natural numbers with $k_n|k_{n+1}$. One has $(k_n\IZ)_n\precsim (l_n\IZ)_n$ if and only if for all $n$, there exists $m$ such that $k_n|l_m$. 

Recall that a supernatural number $\mathfrak{p}$ is a formal product $\mathfrak{p} = \prod p^{n_p}$, where the product runs over all primes and $n_p \in \IN \cup \{ 0,\infty \}$. Another supernatural number $\mathfrak{q}=  \prod p^{m_p}$ divides $\mathfrak{p}$ if $m_q \leq n_p$ for all $p$. We can then establish the following correspondence.
\[
(\Lambda(G)/_\sim ~,~ \precsim) \cong (\text{supernatural numbers ~,~ dividing relation}),
\]
by sending $(k_n\IZ)_n$ to the supernatural number $\mathfrak{p}$ defined by
\[
p^m | \mathfrak{p}\iff p^m|k_n~\text{for some}~n,
\]
where $p$ is any prime natural number. Hence, a sequence $(k_n\IZ)_n\in\Lambda(\IZ)$ is dominating if and only if every prime power devides some member of the sequence $(k_n)_n$. An immediate example of this is $(n!\cdot\IZ)_n$.
}\\

{\example \label{dominating Z^m}
We may generalize the last statement of the previous example to any $m\in\IN$: the sequence $(n!\cdot\IZ^m)_n$ is a dominating regular approximation of $\IZ^m$.
}
\begin{proof}
It suffices to show that any finite-index subgroup of $\IZ^m$ contains a subgroup of the form $n\IZ^m$ for some $n\in\IN$. So let $H\subset\IZ^m$ be a subgroup with finite index. Then $H$ is free abelian with rank $m$. Let $v_1,\dots,v_m$ denote a set generators for $H$. If we view $H$ and $\IZ^m$ as subsets of $\IQ^m$, then we claim that the vectors $v_1,\dots,v_m$ form a $\IQ$-basis. Indeed, let $\lambda_1,\dots,\lambda_m\in\IQ$ be given with $0=\lambda_1v_1+\dots+\lambda_mv_m$. Then we can find a positive integer $k>0$ such that $k\lambda_1,\dots,k\lambda_m\in\IZ$, and we have $0=(k\lambda_1)v_1+\dots+(k\lambda_m)v_m$. Since $H$ is free abelian of rank $m$, it follows that $k\lambda_i=0$ for all $i$, which implies $\lambda_i=0$ for all $i$. This shows that the vectors $v_1,\dots,v_m\in\IQ^m$ are linearly independent in $\IQ^m$, and thus form a basis.

Let $e_1,\dots,e_m$ denote the standard generators of $\IZ^m$. We claim that for each $i=1,\dots,m$, there is a positive integer $k_i$ such that $k_ie_i\in H$. Indeed, we can find some $\lambda_1,\dots,\lambda_m\in\IQ$ with $e_i=\lambda_1v_1+\dots+\lambda_mv_m$. If we choose $k_i$ large enough such that $k_i\lambda_j\in\IZ$ for all $j=1,\dots,m$, then $k_ie_i = (k_i\lambda_1)v_1+\dots+(k_i\lambda_m)v_m\in H$. 

After having found these numbers, let $n$ be the least common multiple of $k_1,\dots,k_m$. Then for all $i=1,\dots,m$, it follows that $\frac{n}{k_i}\in\IZ$ and thus $ne_i=\frac{n}{k_i}\cdot k_i\cdot e_i \in H$. In particular, we have $n\IZ^m\subset H$.
\end{proof}

In applications, the existence of a dominating regular approximation can be quite useful. In the case of finitely generated groups, this turns out to be automatic:

{\prop \label{dominating fg}
Let $G$ be a finitely generated and residually finite group. Then $G$ has a dominating regular approximation consisting of normal subgroups.
}
\begin{proof}
Let $S\fin G$ be a finite generating set. Given some finite group $E$, there are at most as many group homomorphisms from $G$ to $E$ as there are maps from $S$ to $E$, of which there are $|E|^{|S|}$. Now up to isomorphism, there are only countably many finite groups. Since any normal subgroup of $G$ with finite index arises as a kernel of a homomorphism into a finite group, this implies that $G$ has at most countably many normal subgroups of finite index. Let $\set{N_n}_{n\in\IN}$ be an enumeration of this set, and define $(G_n)_n\in\Lambda(G)$ recursively via $G_1=N_1$ and $G_{n+1}=G_n\cap N_{n+1}$ for all $n\in\IN$. By construction, any normal subgroup must contain a member of this sequence as a subgroup, so this sequence is indeed dominating. 
\end{proof}

\begin{rem} \label{Lambda(G)}
Let $(G_n)_n, (H_n)_n\in\Lambda(G)$ be two sequences with $(G_n)_n\precsim (H_n)_n$.
Then it follows from~\ref{decay functions}(2) that $\asdim(\square_{\set{H_n}} G)\leq\asdim(\square_{\set{G_n}} G)$. In other words, the map
\[
\Lambda(G)\longrightarrow\IN,\quad \sigma\longmapsto \asdim(\square_\sigma G)
\]
is order-reversing.

This implies that the values of asymptotic dimension for all box spaces associated to dominating sequences are the same, and they take the lowest value among all possible box spaces associated to decreasing, separating sequences of finite-index subgroups. 

If $(H_n)_n\in\Lambda(G)$ is any dominating sequence, we will call the associated box space $\square_{\set{H_n}} G$ a \emph{standard} box space, and will sometimes denote it by $\square_s G$ when $G$ is finitely generated. While there might a priori be some ambiguity, we will exclusively be interested in the asymptotic dimension of these spaces, so the above argument shows that there is no ambiguity concerning the asymptotic dimension of a standard box space. 
\end{rem}


\section{Box spaces of nilpotent groups}\label{sec:nilpotent-box}

In this section, we show that the standard box space of a finitely generated, virtually nilpotent groups has finite asymptotic dimension. Recall that a group $G$ is called nilpotent if it has a central series of finite length, i.e.\ there is a sequence of subgroups $ \set{1_G} = G_0 \trianglelefteq G_1 \trianglelefteq \dots\trianglelefteq G_r = G $ such that $[G, G_{i}] \leq G_{i-1}$ for $i = 1, \cdots, r$. Furthermore, a group is called virtually nilpotent if it contains a nilpotent subgroup of finite index. 

We begin this section by recording a few technical observations:

\begin{lemma} \label{right-invariant cover}
Let $G$ be a locally compact group with a proper, right-invariant metric $d$. Let $H\subset G$ be a discrete, cocompact subgroup. Let $G$ have finite covering dimension $m\in\IN$. Then there exists a uniformly bounded, open cover $\CW$ of $G$ with positive Lebesgue number and a decomposition $\CW=\CW^{(0)}\cup\dots\cup\CW^{(m)}$, such that for each $l=0,\dots,m$, the collection $\CW^{(l)}$ consists of mutually disjoint members and is $H$-invariant with respect to multiplication from the right. 
\end{lemma}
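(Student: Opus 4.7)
The plan is to reduce the construction to dimension theory on the compact metric quotient $G/H$ and then lift the resulting cover along the quotient map. Since $d$ is right-invariant, $H$ acts on $G$ from the right by isometries; this action is free, and properly discontinuous because $H$ is discrete and $d$ is proper (for any compact $K\subset G$, the set $K^{-1}K\cap H$ is finite). Together with cocompactness, this makes $\pi:G\to G/H$ a covering map whose quotient pseudometric is a genuine compact metric on $G/H$, and since $\pi$ is a local homeomorphism, $\dim(G/H)\leq\dim G = m$. Note also that $\pi$ is $1$-Lipschitz.

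I would begin by choosing, for each $x\in G$, a bounded open neighborhood $U_x'$ of $x$ on which $\pi$ is injective (available by the properly discontinuous $H$-action), and then shrinking to a bounded open $U_x\subset U_x'$ with $\bar U_x\subset U_x'$; this shrinking ensures $\bar U_x\cap \bar U_x h = \emptyset$ for every $h\in H\setminus\{1\}$. Compactness of $G/H$ yields finitely many such neighborhoods $U_1,\dots,U_k$ whose images $E_i:=\pi(U_i)$ cover $G/H$. Since $\dim(G/H)\leq m$, an Ostrand-type refinement of $\{E_i\}$ then produces an open cover $\CV=\CV^{(0)}\cup\dots\cup\CV^{(m)}$ of $G/H$ in which each $\CV^{(l)}$ is a disjoint family and each $V\in\CV$ is contained in some $E_{i(V)}$; this cover has a positive Lebesgue number $\delta_1$.

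I would then lift: for each $V\in\CV$ set $V_0:=(\pi|_{U_{i(V)}})^{-1}(V)\subset U_{i(V)}$, so that $\pi^{-1}(V)=\bigsqcup_{h\in H} V_0 h$, and define
\[
\CW^{(l)} := \{V_0 h\mid V\in\CV^{(l)},\, h\in H\},\qquad \CW := \bigcup_{l=0}^{m}\CW^{(l)}.
\]
$H$-invariance is built in. Within each $\CW^{(l)}$, distinct members come either from different $V,V'\in\CV^{(l)}$ (where $V\cap V'=\emptyset$ forces $V_0 h\cap V_0' h'=\emptyset$) or from different sheets of the same $V$ (disjoint by construction). $\CW$ covers $G$ because $\CV$ covers $G/H$, and uniform boundedness follows from right-invariance: $\diam(V_0 h)=\diam(V_0)\leq\diam(U_{i(V)})$, while there are only finitely many $V\in\CV$.

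The main obstacle is producing a positive Lebesgue number, since a priori two sheets $V_0 h, V_0 h'$ could be arbitrarily close in $G$. To rule this out, I would use properness of $d$ and the disjointness $\bar U_i\cap \bar U_i h=\emptyset$ for $h\neq 1$. The identity $d(a,bh)=d(ah^{-1},b)$ supplied by right-invariance gives a triangle-inequality estimate
\[
\dist(\bar U_i,\bar U_i h)\geq d(h^{-1},1)-2\sup_{a\in\bar U_i} d(a,1),
\]
so $\dist(\bar U_i,\bar U_i h)\to\infty$ as $h\to\infty$ in the proper metric on $H$; combined with the positivity of each individual distance, this forces $\rho := \min_i\inf_{h\neq 1}\dist(\bar U_i,\bar U_i h)>0$. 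For any $g\in G$ and any $r<\min(\rho,\delta_1/2)$, the $1$-Lipschitz map $\pi$ sends $B_r(g)$ into a set of diameter $<\delta_1$ in $G/H$, hence into some $V\in\CV$; then $B_r(g)\subset\bigsqcup_h V_0 h$, and separation by $\rho>r$ confines $B_r(g)$ to a single sheet $V_0 h\in\CW$. This yields a Lebesgue number of at least $\min(\rho,\delta_1/2)>0$ and completes the proof.
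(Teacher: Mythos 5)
Your proof is correct and follows essentially the same route as the paper: push down to the compact quotient $G/H$, take a colored ($(m+1)$-decomposable) refinement with a positive Lebesgue number there, lift each member through a sheet of $\pi$, and use a uniform positive separation between distinct $H$-translates to get disjointness and a Lebesgue number upstairs. The only (immaterial) difference is that the paper arranges the separation in advance by fixing $\eta$ with $d(h,1_G)\geq 3\eta$ for $h\neq 1$ and taking all lifted sets of diameter at most $\eta$, whereas you derive the uniform gap $\rho>0$ afterwards from properness and compactness of the closures.
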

\begin{proof}
Consider the quotient map $\pi: G\to G/H$, which is a local homeomorphism. Then $m=\dim(G/H)$. Since $H\subset G$ is discrete, there exists $\eta>0$ with $d(h,1_G)\geq 3\eta$ for all $h\in H\setminus\set{1_G}$. By right-invariance of $d$, this implies $d(h_1,h_2)\geq 3\eta$ for all $h_1\neq h_2$ in $H$.

For every $x\in G/H$, choose $g\in G$ with $x=\pi(g)$, and then choose an open neighbourhood $U_x$ of $g$ with diameter at most $\eta$. Now the collection of the images of these sets $\set{V_x}_{x\in G/H}$ is an open cover of $G/H$. By compactness, there exists some finite subcover. Using $m=\dim(G/H)$, we can choose an open cover $\CU$ of $G/H$ refining $\set{V_x}_{x\in G/H}$ with the following properties:
\begin{itemize}
\item There is a decomposition $\CU=\CU^{(0)}\cup\dots\cup\CU^{(m)}$ such that for each $l=0,\dots,m$, one has $U\cap V=\emptyset$ for all $U\neq V$ in $\CU^{(l)}$.
\item For all $V\in\CU$, there exists an open set $U_V\subset G$ with $\pi(U_V)=V$ and $\diam(U_V)\leq\eta$.
\end{itemize}
Since $G/H$ is compact, we can apply the Lebesgue number theorem (see~\cite[7.2.12]{Sutherland}) to deduce that this cover has some positive Lebesgue number $\mu>0$ with respect to the push-forward metric $\pi_*(d)$. Set $r=\min(\mu,\eta)$.

Now consider the uniformly bounded, open cover $\CW=\CW^{(0)}\cup\dots\cup\CW^{(m)}$ of $G$ given by
\[
\CW^{(l)} = \set{ U_V\cdot h ~|~ V\in\CU^{(l)}, h\in H }
\]
for all $l=0,\dots,m$.

\paragraph{\bf Claim 1:} For each $l=0,\dots,s$, the members in $\CW^{(l)}$ are mutually disjoint.

Indeed, let $W_1\neq W_2$ in $\CW^{(l)}$. Let $V_1,V_2\in\CU^{(l)}$ and $h_{1}, h_{2}\in H$ with $W_1=U_{V_1}h_{1}$ and $W_2=U_{V_2}h_{2}$.

\begin{itemize}
\item[Case 1:] $V_1\neq V_2$. In this case, we get $\pi(W_1)=V_1$ and $\pi(W_2)=V_2$, which are disjoint, so $W_1$ and $W_2$ have to be disjoint.
\item[Case 2:] $h_1\neq h_2$. Then $d(h_1,h_2)\geq 3\eta$. Now $W_1$ is an open neighbourhood of $h_1$ with diameter at most $\eta$, and likewise $W_2$ is an open neighbourhood of $h_2$ with diameter at most $\eta$. It follows by the triangle inequality that $W_1$ and $W_2$ have distance at least $\eta$, so they must be disjoint.
\end{itemize}

\paragraph{\bf Claim 2:} The Lebesgue number of $\CW$ is at least $r$.

Let $X\subset G$ be some set of diameter less than $r$ with respect to $d$. Then $\pi(X)$ has diameter less than $r$ with respect to $\pi_*(d)$. Since $r\leq\mu$, there is some $V\in\CU$ with $\pi(X)\subset V$. But then we have
\[
X\subset\pi^{-1}(\pi(X))\subset\pi^{-1}(V) = \bigcup_{h\in H} U_V\cdot h.
\]
As we have observed earlier, for all $h_1\neq h_2$, the distance between $U_Vh_1$ and $U_Vh_2$ is at least $\eta$ with respect to $d$. Since $r\leq\eta$, it follows that $X$ must be contained in exactly one set of the form $U_Vh$, which is a member of $\CW$.
\end{proof}

The following is the key technical Lemma that will allow us to prove that finitely generated virtually, nilpotent groups have  finite-dimensional box spaces:

\begin{lemma} \label{sufficient crit}
Let $G$ be a locally compact group $G$ with a proper, right-invariant metric $d$. Let $H\subset G$ be a discrete and cocompact subgroup. Suppose that there exists a sequence of continuous automorphisms $\sigma_n\in\Aut(G)$ satisfying
{
\renewcommand{\theenumi}{\alph{enumi}}
\begin{enumerate}
 \item for all $n\in\IN$, the map $\sigma_n$ restricts to an endomorphism on $H$;
 \item for every compact set $K$ and open neighbourhood $U$ of $1_G$, there exists $n\in\IN$ with $K\subset (\sigma_n \circ \sigma_{n-1} \circ \ldots \circ \sigma_0) (U)$.
\end{enumerate}
}
Then $H$ is residually finite and admits a regular approximation $\sigma$ with $\asdim(\square_\sigma H)\leq \dim(G)$. In particular, if $H$ is finitely generated, then $\asdim(\square_s H)\leq\dim(G)$.
\end{lemma}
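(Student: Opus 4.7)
The plan is to produce a decreasing sequence of finite-index subgroups of $H$ whose associated box space has asymptotic dimension at most $\dim(G)$, by transporting a good cover of $G$ via the expanding automorphisms $\tau_n := \sigma_n \circ \sigma_{n-1} \circ \cdots \circ \sigma_0$. Each $\tau_n$ is a self-homeomorphism of $G$ which, by hypothesis~(a), restricts to an injective endomorphism of $H$, so $H_n := \tau_n(H)$ is a subgroup of $H$. I first check that $H_n$ has finite index in $H$: both $H$ and $H_n$ are discrete and cocompact in $G$ (the latter because $\tau_n$ is a homeomorphism), and the natural continuous surjection $G/H_n \twoheadrightarrow G/H$ has discrete fibers in bijection with $H/H_n$; compactness of $G/H_n$ then forces these fibers to be finite. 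Since the sequence $(H_n)_n$ need not itself be decreasing when the $\sigma_n$ vary, I replace it by $H'_n := H_0 \cap H_1 \cap \cdots \cap H_n$, a decreasing sequence of finite-index subgroups of $H$.

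Next, fix a proper right-invariant metric $d$ on $G$ and apply Lemma~\ref{right-invariant cover} to obtain a uniformly bounded open cover $\CW = \CW^{(0)} \cup \cdots \cup \CW^{(m)}$ of $G$, with $m = \dim(G)$ and some positive Lebesgue number $r$, such that each $\CW^{(l)}$ consists of mutually disjoint members and is $H$-invariant under right multiplication. Given $R > 0$, hypothesis~(b) applied to the compact ball $K = B_R(1_G)$ and to the open set $U = B_r(1_G)$ produces some $n_0$ with $\tau_{n_0}^{-1}(B_R(1_G)) \subseteq B_r(1_G)$. Since $d$ is right-invariant and $\tau_{n_0}^{-1}$ is a group homomorphism, $d(\tau_{n_0}^{-1}(x),\tau_{n_0}^{-1}(y)) = d(\tau_{n_0}^{-1}(xy^{-1}),1_G)$, so this inclusion translates into: $\tau_{n_0}^{-1}$ shrinks any subset of $G$ of diameter at most $R$ to one of diameter at most $r$. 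Consequently the cover $\tau_{n_0}(\CW) := \{\tau_{n_0}(W) : W \in \CW\}$ of $G$ has Lebesgue number at least $R$; it is uniformly bounded (each $\CW^{(l)}$ is generated by finitely many $H$-orbits, so $\tau_{n_0}(\CW^{(l)})$ consists of $H_{n_0}$-translates of finitely many bounded sets), is $H_{n_0}$-invariant, and still consists of mutually disjoint members per color class. Restricting to $H$ (equipped with $d|_H$, which is proper and right-invariant) preserves all these properties, and since $H'_{n_0} \subseteq H_{n_0}$, the restricted cover is in particular $H'_{n_0}$-invariant.

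This verifies condition~(2) of Lemma~\ref{decay functions} for the sequence $(H'_n)$ with $s = \dim(G)$. Invoking its equivalence with condition~(1) yields, in one stroke, that $(H'_n)$ is semi-conjugacy-separating---so in particular $H$ is residually finite---and $\asdim(\square_{(H'_n)} H) \leq \dim(G)$. When $H$ is moreover finitely generated, Proposition~\ref{dominating fg} supplies a dominating regular approximation, and the order-reversing monotonicity spelled out in Remark~\ref{Lambda(G)} gives $\asdim(\square_s H) \leq \asdim(\square_{(H'_n)} H) \leq \dim(G)$.

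The main technical point I expect to be delicate is the passage from the expanding property of $\tau_n$ on balls (hypothesis~(b)) to Lebesgue-number control on the transported cover; the key is that right-invariance of $d$ combined with the homomorphism property of $\tau_n^{-1}$ converts an arbitrary diameter bound into a containment statement about the image of a single ball around~$1_G$, which is precisely what~(b) provides. A secondary concern---that the natural candidate $(\tau_n(H))_n$ need not be nested when the $\sigma_n$ differ---is resolved trivially by passing to the running intersection $(H'_n)$, whose invariance and finite-index properties are inherited from the larger groups $H_n$.
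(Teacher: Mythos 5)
Your proof is correct and follows essentially the same route as the paper: transport the $H$-invariant cover supplied by Lemma \ref{right-invariant cover} via the expanding automorphisms $\tau_n = \sigma_n\circ\cdots\circ\sigma_0$, use right-invariance of the metric to convert the expansion hypothesis into a Lebesgue-number bound for the transported cover, and then verify condition (2) of Lemma \ref{decay functions}. The only place you deviate is in replacing $(\tau_n(H))_n$ by its running intersections to guarantee a decreasing sequence --- a point the paper's proof glosses over, since $(\tau_n(H))_n$ is automatically nested only under an extra condition such as $\sigma_{n+1}(\tau_n(H))\subseteq\tau_n(H)$ (which does hold in all of the paper's applications) --- and this harmless fix, together with reading off residual finiteness from the semi-conjugacy-separating conclusion of Lemma \ref{decay functions} rather than proving $\bigcap_n\tau_n(H)=\{1\}$ directly, leaves the argument otherwise identical.
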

\begin{proof}
Observe that each subgroup $H_n= (\sigma_n \circ \sigma_{n-1} \circ \ldots \circ \sigma_0) (H) \subset H$ must have finite index in $H$. Indeed, since $H_n$ is the image of $H$ under an automorphism on $G$, the subgroup $H_n$ is cocompact in $G$. Let $K\subset G$ be a compact subset with $G=\bigcup_{h\in H_n} K\cdot h$. Let $\pi_n: G\to G/H_n$ be the quotient map. Then the restriction of $\pi_n|_K$ is surjective. Now we may identify $H/H_n$ as a subset of $G/H_n$ in the obvious way, and then the restriction of $\pi_n$ yields a surjective map from $K\cap H$ onto $H/H_n$. But since $K\cap H$ is the intersection of a compact set and a discrete set, it is finite, and thus $H/H_n$ is finite. Since $H$ is discrete, there exists an open neighbourhood $U\subset G$ of the unit with $U\cap H=\set{1_G}$. Then given any predescribed finite set $F\fin H$ containing the unit, there exists $n$ with $F\subset (\sigma_n \circ \ldots \circ \sigma_0) (U)$, from which it follows that $F\cap H_n = \set{1}$. As $F$ is arbitrary, it follows that $\bigcap_{n\in\IN} H_n=\set{1}$, and in particular $H$ is residually finite.

Without loss of generality, assume that $G$ has finite covering dimension. Set $m=\dim(G)$. Apply~\ref{right-invariant cover} to choose
a uniformly bounded, open cover $\CW$ of $G$ with positive Lebesgue number $\delta>0$ and a decomposition $\CW=\CW^{(0)}\cup\dots\cup\CW^{(m)}$, such that for each $l=0,\dots,s$, the collection $\CW^{(l)}$ consists of mutually disjoint members and is $H$-invariant with respect to multiplication from the right. 

For each $n$, consider the uniformly bounded cover $\CW_n=\CW_n^{(0)}\cup\dots\cup\CW_n^{(m)}$ of $G$ given by $\CW_n^{(l)} = (\sigma_n \circ \ldots \circ \sigma_0) (\CW^{(l)}) =\set{ (\sigma_n \circ \ldots \circ \sigma_0) (U)~|~U\in\CW^{(l)}}$ for all $l=0,\dots,m$. Then any two distinct members of $\CW_n^{(l)}$ are disjoint. The first property of $(\sigma_n)_n$ ensures that for all $n$ and $l=0,\dots,m$, the collection $\CW_n^{(l)}$ is right-invariant with respect to $H_n=(\sigma_n \circ \ldots \circ \sigma_0)(H)\subset H$. Now let $R>0$. Since the metric $d$ is proper, the second property of the sequence $(\sigma_n)_n$ ensures that we find some $n$ such that 
\[
B_R(1_G)\subset (\sigma_n \circ \ldots \circ \sigma_0) (B_{\delta/3}(1_G)).
\]
Since $\CW$ has Lebesgue number $\delta$, any ball of radius $\delta/3$ is contained in a member of $\CW$. By our choice of $n$, any ball of radius $R$ is therefore contained in a member of $\CW_n$, which shows that the Lebesgue number of $\CW_n$ is at least $R$.

If we now restrict the cover $\CW_n=\CW_n^{(0)}\cup\dots\cup\CW_n^{(m)}$ to a cover on $H$, we see that condition~\ref{decay functions}(2) is met for $R$. Since $R>0$ was arbitrary, we see that $\asdim(\square_{\{H_n\}} H)\leq \dim(G)$. If $H$ is finitely generated, the last part of the claim then follows from~\ref{Lambda(G)}.
\end{proof}

\begin{cor} \label{asdim box Z^m}
For all $m\in\IN$, we have $\asdim(\square_s\IZ^m)=m$.
\end{cor}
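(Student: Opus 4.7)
The plan is to establish the equality by a matching upper and lower bound, both of which follow quickly from machinery already developed in this section.

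For the upper bound $\asdim(\square_s\IZ^m)\leq m$, I would apply Lemma \ref{sufficient crit} to the ambient locally compact group $G=\IR^m$ equipped with its Euclidean metric (which is proper and translation-invariant, hence right-invariant) and the discrete cocompact subgroup $H=\IZ^m$. Since $\dim(\IR^m)=m$, it suffices to exhibit a sequence of continuous automorphisms $\sigma_n\in\Aut(\IR^m)$ satisfying conditions (a) and (b) of that Lemma. The natural candidate is scalar multiplication: set $\sigma_n(x)=2x$ for every $n$. Then each $\sigma_n$ restricts to an endomorphism of $\IZ^m$, and the iterated composition $\sigma_n\circ\cdots\circ\sigma_0$ is multiplication by $2^{n+1}$, which sends any open neighbourhood of $0$ onto a ball of arbitrarily large radius and therefore eventually swallows any prescribed compact set. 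The Lemma then yields $\asdim(\square_s\IZ^m)\leq m$.

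For the lower bound $\asdim(\square_s\IZ^m)\geq m$, I would invoke Corollary \ref{asdim asdimbox}, which gives $\asdim(\IZ^m)\leq\asdim(\square_s\IZ^m)$ once $\IZ^m$ is equipped with any proper right-invariant metric. Combined with the classical fact that the coarse space $\IZ^m$ has asymptotic dimension exactly $m$ (since it is coarsely equivalent to $\IR^m$ via the inclusion, and $\asdim(\IR^m)=m$), this yields the desired inequality.

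Neither step presents a real obstacle: the only delicate point is checking that the multiplication-by-$2$ automorphisms legitimately satisfy hypothesis (b) of Lemma \ref{sufficient crit}, and this is immediate from the properness of the Euclidean metric together with the fact that $2^{n+1}\to\infty$.
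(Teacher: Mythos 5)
Your proof is correct and follows essentially the same route as the paper: the paper also applies Lemma \ref{sufficient crit} to $G=\IR^m$, $H=\IZ^m$ with scaling automorphisms (there $\sigma_n(x)=n\cdot x$, versus your constant doubling maps, an immaterial difference), and obtains the lower bound from $\asdim(\square_s\IZ^m)\geq\asdim(\IZ^m)=m$ via Corollary \ref{asdim asdimbox}.
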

\begin{proof}
Apply~\ref{sufficient crit} for $G=\IR^m, H=\IZ^m$ and the sequence of automorphisms given by $\sigma_n(x)=n\cdot x$. This shows $\asdim(\square_s\IZ^m)\leq m$. On the other hand, we have $\asdim(\square_s\IZ^m)\geq\asdim(\IZ^m)=m$.
\end{proof}

\begin{rem} \label{group properties}
Let us now list a few well-known facts about (virtually) nilpotent groups, which in particular make them interesting from the point of view of the previous section:

\begin{enumerate}
\item Finitely generated, virtually nilpotent groups are residually finite. (see~\cite[2.10, 2.13]{Wehrfritz})
\item A subgroup or quotient of a finitely generated, (virtually) nilpotent group is finitely generated and (virtually) nilpotent. (see~\cite[5.35 and 5.36]{Rotman})
\item Any finitely generated, virtually nilpotent group contains a torsion-free nilpotent subgroup of finite index. (see~\cite[10.2.4]{Passman} or~\cite[2.6, p.~22]{Wehrfritz})
\item The center of an infinite, finitely generated, nilpotent group is infinite. (see~\cite[11.4.3(ii)]{Passman})
\end{enumerate}
By~\ref{dominating fg}, it follows that a finitely generated, virtually nilpotent group admits a dominating regular approximation, and thus has a standard box space. 
\end{rem}

Let us now consider a prominent class of examples for nilpotent groups:

\begin{defi}
Let $R$ be a commutative, unital ring and $d\in\IN$. The unitriangular matrix group of size $d$ over $R$ is defined by
\[
U_d(R) = \set{ x=(x_{i,j})_{1\leq i,j\leq d} ~|~ x_{i,j}\in R, x_{i,j} =0 \textup{ for } i>j \textup{ and } x_{i,i}=1 }.
\]
The multiplication is given by 
\[
( x\cdot y )_{i,j} = \sum_{m=i}^j x_{i,m}y_{m,j}\quad\text{for all}~1\leq i<j\leq d,
\]
as the usual matrix product. By slight abuse of notation we will write $ U_d(R) = \set{ x=(x_{i,j})_{1\leq i<j\leq d} ~|~ x_{i,j}\in R}$.
\end{defi}

These groups are known to be finitely generated and nilpotent, with nilpotency class equal to $d-1$.
Some of these groups will play an important role for the remainder of this section. This is due to the following embedding theorem:

{
\theorem[see {\cite[5.2]{Jennings55}} and~\cite{Swan67}] \label{Jennings}
Let $G$ be a finitely generated, torsion-free nilpotent group. Then $G$ embeds as a subgroup into $U_d(\IZ)$ for some $d\geq 2$.
}

\defi \label{scaling action}
Let $r>0$ be a real number. Then the map $\alpha_r: U_d(\IR)\to U_d(\IR)$ defined by
\[
\alpha_r(x)_{i,j} = r^{j-i}x_{i,j}\quad\text{for all}~1\leq i<j\leq d
\]
yields a well-defined endomorphism. Moreover, it is immediate that $\alpha_{rs} = \alpha_r\circ\alpha_s$ for all $r,s>0$ and $\alpha_1=\id$, thus
\[
\alpha: \IR^{>0} \to \Aut(U_d(\IR)),\quad r\mapsto\alpha_r
\]
yields a group action on $U_d(\IR)$.

\begin{rem} \begin{enumerate}
\item Observe that if $r>0$ is an integer, then $\alpha_r$ restricts to an endomorphism on $U_d(\IZ)$.
\item It is a well-known fact that $U_d(\IZ)$ is a discrete and cocompact subgroup in $U_d(\IR)$. In fact, one has $U_d(\IR)=K\cdot U_d(\IZ)$ for the compact set
\[
K = \set{ x=(x_{i,j})_{1\leq i<j\leq d} \in U_d(\IR) ~\Big|~ |x_{i,j}|\leq\frac12 }
\]
in $U_d(\IR)$, see~\cite[8.2.2]{AbbaMosk}.
\end{enumerate}
\end{rem}

\begin{theorem} \label{asdim Ud}
One has $\asdim(\square_s U_d(\IZ))=d(d-1)/2$ for all $d\geq 2$.
\end{theorem}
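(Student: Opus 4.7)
I would prove this by sandwiching $\asdim(\square_s U_d(\IZ))$ between $d(d-1)/2$ from both sides, in the spirit of Corollary \ref{asdim box Z^m}, but using the expanding scaling automorphisms $\alpha_r$ from Definition \ref{scaling action} in place of ordinary integer scalar multiplication.

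For the upper bound, my plan is to apply Lemma \ref{sufficient crit} with $G := U_d(\IR)$ and its discrete cocompact subgroup $H := U_d(\IZ)$. As a topological space, $U_d(\IR)$ is homeomorphic to $\IR^{d(d-1)/2}$ via the map reading off the strictly upper-triangular entries, so $\dim(U_d(\IR)) = d(d-1)/2$. For the sequence of automorphisms, I would take $\sigma_n := \alpha_2$ for every $n \in \IN$. Condition (a) of the lemma is immediate, since $\alpha_2$ sends integer entries to integer entries and therefore restricts to an endomorphism of $U_d(\IZ)$. For condition (b), note that $\sigma_n \circ \cdots \circ \sigma_0 = \alpha_{2^{n+1}}$, whose inverse $\alpha_{2^{-(n+1)}}$ scales the $(i,j)$-entry by the factor $2^{-(n+1)(j-i)}$. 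Since all exponents $j - i$ are strictly positive, the maps $\alpha_{2^{-(n+1)}}$ contract uniformly on compact sets towards the identity. Hence for any compact $K \subset G$ and any open neighborhood $U$ of $1_G$, we have $\alpha_{2^{-(n+1)}}(K) \subset U$ (equivalently, $K \subset \alpha_{2^{n+1}}(U)$) for all sufficiently large $n$. Lemma \ref{sufficient crit} then yields $\asdim(\square_s U_d(\IZ)) \leq d(d-1)/2$.

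For the lower bound, Corollary \ref{asdim asdimbox} gives $\asdim(U_d(\IZ)) \leq \asdim(\square_s U_d(\IZ))$, so it suffices to establish that $\asdim(U_d(\IZ)) \geq d(d-1)/2$. This is a classical result in the coarse geometry of nilpotent groups: the asymptotic dimension of a finitely generated torsion-free nilpotent group coincides with its Hirsch length, which for $U_d(\IZ)$ is exactly $d(d-1)/2$ (the number of strictly upper-triangular entries, equivalently the real dimension of its Mal'cev completion $U_d(\IR)$).

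The main obstacle is mostly formal. The upper bound is a direct application of Lemma \ref{sufficient crit} once one recognises the scaling automorphisms as the correct choice of expanding maps, while the lower bound requires invoking an external coarse-geometric input about asymptotic dimensions of nilpotent groups. Just as the proof of Corollary \ref{asdim box Z^m} cites $\asdim(\IZ^m) = m$ without proof, the cleanest exposition here would simply invoke the corresponding classical identification for $U_d(\IZ)$.
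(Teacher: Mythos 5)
Your proposal is correct and follows essentially the same route as the paper: the paper likewise applies Lemma \ref{sufficient crit} to $H = U_d(\IZ) \subset G = U_d(\IR)$ with the scaling automorphisms of Definition \ref{scaling action} (it takes $\sigma_n = \alpha_{n+1}$, so the compositions are $\alpha_{(n+1)!}$, rather than your constant choice $\alpha_2$ giving $\alpha_{2^{n+1}}$ --- an immaterial difference). Your explicit handling of the lower bound via Corollary \ref{asdim asdimbox} and the classical identification of $\asdim(U_d(\IZ))$ with its Hirsch length $d(d-1)/2$ is, if anything, slightly more complete than the paper's write-up, which leaves that half of the equality implicit.
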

\begin{proof}
We shall apply~\ref{sufficient crit} for $G=U_d(\IR), m=d(d-1)/2, H=U_d(\IZ)$ and $\sigma_n=\alpha_{n+1}$. Indeed, since $U_d(\IR)$ is homeomorphic to $\IR^{d(d-1)/2}$ by evaluation of the matrix components, it has covering dimension $d(d-1)/2$. Moreover, $U_d(\IZ)$ is a discrete, cocompact subgroup such that each automorphism $\sigma_n$ of $U_d(\IR)$ restricts to an endomorphism on $U_d(\IZ)$. Lastly, let $K\subset U_d(\IR)$ be any compact set and $U\subset U_d(\IR)$ any open neighbourhood of the unit. Then there are positive numbers $R,\delta>0$ such that
\[
K\subset\set{ x=(x_{i,j})_{1\leq i<j\leq d} \in U_d(\IR) ~\Big|~ |x_{i,j}|\leq R }
\]
and
\[
\set{ x=(x_{i,j})_{1\leq i<j\leq d} \in U_d(\IR) ~\Big|~ |x_{i,j}|\leq\delta }\subset U.
\]
Then any natural number $n$ with $(n+1)! \geq\frac{R}{\delta}$ clearly satisfies $K\subset (\sigma_n \circ \ldots \circ \sigma_0) (U)$. Thus, we have verified the conditions in~\ref{sufficient crit}, and thus the claim follows.
\end{proof}

\begin{samepage}
\begin{theorem} \label{asdim nilpotent}
Let $G$ be a finitely generated, virtually nilpotent group. Then $\asdim(\square_s G)<\infty$.
\end{theorem}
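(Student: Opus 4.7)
The plan is to reduce to Theorem \ref{asdim Ud} via two intermediate steps. First, by Remark \ref{group properties}(3), fix a finite-index torsion-free nilpotent subgroup $N \leq G$ (if $G$ is finite, the claim is trivial, so assume $N$ is infinite). By the embedding Theorem \ref{Jennings}, fix an embedding $N \hookrightarrow U_d(\IZ)$ for some $d \geq 2$, and identify $N$ with its image. Choose a dominating regular approximation $(U_n)_n$ of $U_d(\IZ)$; by Theorem \ref{asdim Ud} and Remark \ref{Lambda(G)}, $\asdim(\square_{(U_n)_n} U_d(\IZ)) = d(d-1)/2$. Corollary \ref{box subgroup}(1),(2) then yields that $\kappa := (U_n \cap N)_n$ is a regular approximation of $N$ satisfying $\asdim(\square_\kappa N) \leq d(d-1)/2$.

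The main obstacle is to translate this upper bound on a box space of $N$ back to the ambient group $G$. The naive candidate of viewing $\kappa$ as a sequence of subgroups of $G$ is problematic because it need not be semi-conjugacy-separating in $G$ --- elements of $G \setminus N$ may be invisible to subgroups of $N$. I would remedy this by passing to normal cores: for each $n$, set
\[
M_n \;:=\; \bigcap_{g \in G} g\,(U_n \cap N)\, g^{-1},
\]
the normal core of $U_n \cap N$ in $G$. Since $U_n \cap N$ has finite index in $G$, the standard $G$-action on $G/(U_n \cap N)$ yields a homomorphism into a finite symmetric group with kernel $M_n$, so $M_n$ is a finite-index normal subgroup of $G$ contained in $U_n \cap N$. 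The sequence $(M_n)_n$ is decreasing with $\bigcap_n M_n \subset \bigcap_n (U_n \cap N) = \set{1}$, and normality together with Lemma \ref{lem:semi-conj-separating}(2) immediately gives semi-conjugacy-separation in $G$, so $(M_n)_n$ is a regular approximation of $G$.

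Finally, since each $M_n$ is contained in $N$, we have $N \cap M_n = M_n$, and applying Corollary \ref{box subgroup}(3) to the finite-index inclusion $N \leq G$ yields
\[
\asdim(\square_{(M_n)_n} G) \;=\; \asdim(\square_{(M_n)_n} N).
\]
Moreover, $(M_n)_n$ is itself a regular approximation of $N$ (its defining properties being inherited from $G$, using $g^N \subset g^G$) that refines $\kappa$, so by the order-reversing property in Remark \ref{Lambda(G)} we obtain $\asdim(\square_{(M_n)_n} N) \leq \asdim(\square_\kappa N) \leq d(d-1)/2$. Since $\square_s G$ realizes the minimal asymptotic dimension among all regular approximations of $G$, I conclude
\[
\asdim(\square_s G) \;\leq\; \asdim(\square_{(M_n)_n} G) \;\leq\; \tfrac{d(d-1)}{2} \;<\; \infty,
\]
completing the proof.
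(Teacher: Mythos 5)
Your proof is correct and follows essentially the same route as the paper's: pass to a finite-index torsion-free nilpotent subgroup, embed it into $U_d(\IZ)$ via Theorem \ref{Jennings}, and transport the bound from Theorem \ref{asdim Ud} back to $G$ through Corollary \ref{box subgroup} and Remark \ref{Lambda(G)}. Your normal-core construction is a valid (and welcome) way of making rigorous the final step from $N$ back up to $G$, which the paper's one-line proof leaves implicit; one can alternatively note that intersecting a dominating regular approximation of $G$ with $N$ already yields a \emph{dominating} regular approximation of $N$, so that Corollary \ref{box subgroup}(3) applies directly without passing to cores.
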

\end{samepage}
\begin{proof}
Let $G'$ be a nilpotent subgroup of $G$ with finite index.
By~\ref{group properties}, $G'$ contains a torsion-free group $H$ of finite index, which is then necessarily also finitely generated and nilpotent. 
By combining~\ref{Jennings},~\ref{asdim Ud} and~\ref{box subgroup}, we obtain $\asdim(\square_s H)<\infty$. Since $H$ also has finite index as a subgroup in $G$, it follows from~\ref{box subgroup} that $\asdim(\square_s G)<\infty$.
\end{proof}

{\rem
Using a somewhat more involved approach, one can show by induction that for any elementary amenable group $G$, the dimension $\asdim(\square_s G)$ is bounded above by the Hirsch length of $G$. So for example, it is finite for all virtually polycyclic groups, or the lamplighter group. This result is a considerable improvement of~\ref{asdim nilpotent} and is shown by Finn-Sell and the second author in~\cite{FinnSellWu14}.
}


\section{Rokhlin dimension for residually finite groups}\label{sec:Rokhlin}

\noindent
In this section, we define Rokhlin dimension for cocycle actions of residually finite groups. First, we will recall the definition of a (corrected) relative central sequence algebra in the non-unital case, based on Kirchberg's pioneering work~\cite{Kirchberg04} on central sequences of \cstar-algebras.

\defi[cf.~{\cite[1.1]{Kirchberg04}}] 
Let $A$ be a \cstar-algebra. Let $D\subset A_\infty$ be a \cstar-subalgebra. Consider
\[
A_\infty\cap D' = \set{ x\in A_\infty \mid xd=dx~\text{for all}~d\in D }
\]
and
\[
\ann(D,A_\infty) = \set{ x\in A_\infty \mid xd=dx=0~\text{for all}~d\in D}.
\]
Then $\ann(D,A_\infty)\subset A_\infty\cap D'$ is an ideal, and one defines
\[
F(D,A_\infty) = (A_\infty \cap D')/\ann(D, A_\infty) 
\]
the central sequence algebra of $A$ relative to $D$. One writes $F_\infty(A) = F(A,A_\infty)$ as a shortcut.

\begin{rem} Let $A$ be a \cstar-algebra and $D\subset A_\infty$ a separable \cstar-subalgebra.
\begin{enumerate}
\item Using that $D$ has a countable approximate unit, one can apply a standard reindexation argument and see that there is some positive contraction $e\in A_\infty$ with $ed=d=de$ for all $d\in D$.
In particular, the image of $e$ defines a unit of $F(D,A_\infty)$, hence this is a unital \cstar-algebra.
\item If $(\alpha,w): G\curvearrowright A$ is a cocycle action and $D\subset A$ is $\alpha$-invariant and closed under multiplication with the cocycles $\set{w(g,h)~|~g,h\in G}$, then componentwise application of the family of automorphisms $\set{\alpha_{g}}_{g\in G}$ on representing bounded sequences defines a genuine action $\tilde{\alpha}_\infty: G\curvearrowright F(D,A_\infty)$. To see this, we observe for $x\in A_\infty\cap D'$, $d\in D$ and $g,h\in G$ that
\[
w(g,h)xw(g,h)^*d = w(g,h)x(w(g,h)^*d) = dx = xd.
\]
In particular, $x-w(g,h)xw(g,h)^*\in\ann(D,A_\infty)$, showing that the automorphism on $F(D,A_\infty)$ induced by $\ad(w(g,h))$ is indeed trivial.
\end{enumerate}
\end{rem}

\begin{rem}[cf.~{\cite[1.1]{Kirchberg04}}]
\label{tensoring}
Let $A$ be a \cstar-algebra and $D\subset A$ a separable \cstar-subalgebra. Then there is a canonical $*$-homomorphism
\[
F(D,A_\infty)\otimes_{\max} D \to A_\infty\quad\text{via}\quad \bigl( x+\ann(D,A_\infty) \bigl)\otimes a\mapsto x\cdot a.
\]
The image of $\eins\otimes a$ under this $*$-homomorphism is $a$ for all $a\in D$. 

Now if we assume additionally that $\alpha: G\curvearrowright A$ is an action of a discrete group such that $D$ is $\alpha$-invariant, then the above map becomes an equivariant $*$-homomorphism from $\bigl( F(D,A_\infty)\otimes_{\max} D, \tilde{\alpha}_\infty\otimes\alpha|_D \bigl)$ to $(A_\infty, \alpha_\infty)$.
In particular, it makes sense to multiply elements of $F(D,A_\infty)$ with elements of $D$ to obtain an element in $A_\infty$, and this multiplication is compatible with the naturally induced actions by $\alpha$. We will implicitly make use of this observation throughout this section.
\end{rem}

We shall now turn to the definition of Rokhlin dimension for cocycle actions of residually finite groups. 

\begin{defi}\label{defi:dimrok}
Let $A$ be a \cstar-algebra, $G$ a countable group and $(\alpha,w): G\curvearrowright A$ a cocycle action. Let $H\subset G$ be a finite-index subgroup. Let $d\in\IN$ be a natural number. We say that $\alpha$ has Rokhlin dimension at most $d$ relative to $H$, and write $\dimrok(\alpha, H)\leq d$, if the following holds:

For all separable, $\alpha$-invariant \cstar-subalgebras $D\subset A$ closed under multiplication with $\set{w(g,h)~|~ g,h\in G}$, there exist equivariant c.p.c.~order zero maps
\[
\phi_l: (\CC(G/H), G\text{-shift})\longrightarrow (F(D,A_\infty), \tilde{\alpha}_\infty)\quad(l=0,\dots,d)
\]
with $\phi_0(\eins)+\dots+\phi_d(\eins)=\eins$.
The value $\dimrok(\alpha, H)$ is defined to be the smallest $d \in \IN$ such that $\dimrok(\alpha, H)\leq d$, or $\infty$ if no such $d$ exists.
\end{defi}

Let us consider a few equivalent reformulations of the above definition.

\begin{prop} \label{quantifier dimrok}
Let $A$ be a \cstar-algebra, $G$ a countable group and $\alpha: G\curvearrowright A$ an action. Let $H\subset G$ be a subgroup with finite index and $d\in\IN$ a natural number. Then the following are equivalent:
\begin{enumerate}
\item $\dimrok(\alpha, H)\leq d$.
\item For every separable, $\alpha$-invariant \cstar-subalgebra $D\subset A$, there exist positive contractions $(f_{\bar{g}}^{(l)})_{\bar{g}\in G/H}^{l=0,\dots,d}$ in $A_\infty\cap D'$ satisfying
	\begin{enumerate}
	\item[(2a)] $\left(\sum_{l=0}^d \sum_{\bar{g}\in G/H} f_{\bar{g}}^{(l)} \right)\cdot a = a$ for all $a\in D$;
	\item[(2b)] $f_{\bar{g}}^{(l)}f_{\bar{h}}^{(l)} \in \ann(D,A_\infty)$ for all $l=0,\dots,d$ and $\bar{g}\neq\bar{h}$ in $G/H$;
	\item[(2c)] $\alpha_{\infty,g}(f_{\bar{h}}^{(l)})-f_{ {}_{\quer{gh}} }^{(l)} \in \ann(D,A_\infty)$ for all $l=0,\dots,d,~\bar{h}\in G/H$ and $g\in G$.
	\end{enumerate}
\item For all $\eps>0$ and finite sets $M\fin G$ and $F\fin A$, there are positive contractions $(f_{\bar{g}}^{(l)})_{\bar{g}\in G/H}^{l=0,\dots,d}$ in $A$ satisfying
	\begin{enumerate}
	\item[(3a)] $\left(\sum_{l=0}^d \sum_{\bar{g}\in G/H} f_{\bar{g}}^{(l)} \right)\cdot a =_\eps a$ for all $a\in F$;
	\item[(3b)] $\|f_{\bar{g}}^{(l)}f_{\bar{h}}^{(l)}a\|\leq\eps$ for all $a\in F,~l=0,\dots,d$ and $\bar{g}\neq\bar{h}$ in $G/H$;
	\item[(3c)] $\alpha_g(f_{\bar{h}}^{(l)})\cdot a =_\eps  f_{ {}_{\quer{gh}} }^{(l)}\cdot a$ for all $a\in F,~l=0,\dots,d,~\bar{h}\in G/H$ and $g\in M$;
	\item[(3d)] $\|[f_{\bar{g}}^{(l)},a]\|\leq\eps$ for all $a\in F,~l=0,\dots,d$ and $\bar{g}\in G/H$.
	\end{enumerate}
Moreover, it suffices to check these conditions for $M$ being contained in a given generating set of $G$.
\end{enumerate}
\end{prop}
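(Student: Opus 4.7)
The plan is to prove $(1)\Leftrightarrow(2)$ via the standard dictionary between c.p.c.\ order zero maps out of finite-dimensional commutative \cstar-algebras and orthogonal families of positive contractions, and to prove $(2)\Leftrightarrow(3)$ by a reindexation argument.

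For $(1)\Leftrightarrow(2)$: since $G/H$ is finite, $\CC(G/H)$ is spanned by the pairwise orthogonal minimal projections $\delta_{\bar g}$ with $\sum_{\bar g}\delta_{\bar g}=\eins$, and the canonical $G$-shift permutes them via $\delta_{\bar h}\mapsto\delta_{\overline{gh}}$. A c.p.c.\ order zero map $\phi_l:\CC(G/H)\to B$ into any unital \cstar-algebra $B$ is therefore uniquely determined by the positive contractions $b^{(l)}_{\bar g}:=\phi_l(\delta_{\bar g})$, which must be pairwise orthogonal (since $\phi_l$ has order zero on orthogonal projections); equivariance with respect to a $G$-action $\beta$ on $B$ amounts to $\beta_g(b^{(l)}_{\bar h})=b^{(l)}_{\overline{gh}}$, and $\sum_l\phi_l(\eins)=\eins$ to $\sum_{l,\bar g}b^{(l)}_{\bar g}=\eins$. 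Specializing to $B=F_\infty(D,A)$ with action $\tilde\alpha_\infty$, and lifting each $b^{(l)}_{\bar g}$ to a positive contraction $f^{(l)}_{\bar g}\in A_\infty\cap D'$ (which is possible by projectivity of the cone $C_0((0,1])$, applied to the surjection $A_\infty\cap D'\twoheadrightarrow F_\infty(D,A)$), the identities in (1) translate into precisely (2a)--(2c), read as equalities modulo $\ann(D,A_\infty)$.

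For $(3)\Rightarrow(2)$: let $D\subset A$ be separable and $\alpha$-invariant, fix an increasing sequence $F_1\subset F_2\subset\cdots$ of finite subsets of $D$ with dense union, and a nested family $M_n\fin G$ exhausting a given generating set $S$ of $G$. Apply (3) with parameters $(1/n,M_n,F_n)$ to obtain positive contractions $f^{(l,n)}_{\bar g}\in A$, and let $f^{(l)}_{\bar g}$ be the class of $(f^{(l,n)}_{\bar g})_n$ in $A_\infty$. Condition (3d) together with density of $\bigcup_n F_n$ in $D$ yields $f^{(l)}_{\bar g}\in A_\infty\cap D'$; conditions (3a), (3b), and (3c) directly translate to (2a), (2b), and (2c) for $g\in S$. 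Since both $\alpha_{\infty,g}(f^{(l)}_{\bar h})$ and $f^{(l)}_{\overline{gh}}$ lie in $A_\infty\cap D'$, (2c) amounts to an equality in $F_\infty(D,A)$, and because $\tilde\alpha_\infty$ is a genuine $G$-action on $F_\infty(D,A)$ and $S$ generates $G$, the relation extends from $S$ to all of $G$; this argument simultaneously establishes the ``moreover'' clause. For $(2)\Rightarrow(3)$: given $\eps,M,F$, enlarge $F$ to a separable $\alpha$-invariant \cstar-subalgebra $D\subset A$ containing $\bigcup_{g\in\langle M\rangle}\alpha_g(F)$, apply (2) to $D$, and lift each $f^{(l)}_{\bar g}$ to a bounded representing sequence of positive contractions in $A$; multiplying the modulo-$\ann$ relations by elements $a\in F$ converts them into the approximate relations (3a)--(3c) in the $n$-th component for $n$ large, while (3d) follows from $f^{(l)}_{\bar g}\in A_\infty\cap D'\subset A_\infty\cap F'$.

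The main technical burden is bookkeeping between the three layers $A$, $A_\infty$, and $F_\infty(D,A)$, together with making sure that positive contractions (and, for the cleanest version of $(1)\Leftrightarrow(2)$, c.p.c.\ order zero maps out of finite-dimensional algebras) lift cleanly through the quotient $A_\infty\cap D'\twoheadrightarrow F_\infty(D,A)$; this is handled by Loring-type projectivity of $C_0((0,1])$ combined with a standard diagonal-sequence argument. The ``moreover'' about generating sets is essentially automatic once one works inside $F_\infty(D,A)$, where $\tilde\alpha_\infty$ is a genuine group action rather than merely an approximate one.
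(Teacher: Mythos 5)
Your proposal is correct and follows essentially the same route as the paper: the dictionary between equivariant c.p.c.\ order zero maps out of $\CC(G/H)$ and orthogonal families of positive contractions for $(1)\Leftrightarrow(2)$, and representing-sequence/diagonal arguments for $(2)\Leftrightarrow(3)$, with the ``moreover'' clause handled exactly as in the paper by extending (2c) from a generating set to all of $G$ inside $F_\infty(D,A)$. The only cosmetic difference is that you invoke projectivity of the cone to lift positive contractions where the paper simply applies functional calculus to an arbitrary lift; both are fine.
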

\begin{proof}
$(1)\implies (2):$ Suppose $\dimrok(\alpha,H)\leq d$. Choose equivariant c.p.c.~order zero maps
\[
\phi_l: (\CC(G/H), G\text{-shift})\longrightarrow (F(D,A_\infty), \tilde{\alpha}_\infty)\quad(l=0,\dots,d)
\]
with $\phi_0(\eins)+\dots+\phi_d(\eins)=\eins$. Then, by definition, each positive contraction of the form $\phi_l(\chi_{\set{\bar{g}}})\in F(D,A_\infty)$ for $\bar{g}\in G/H$ has a representing element $f_{\bar{g}}^{(l)}\in A_\infty\cap D'$ with $f_{\bar{g}}^{(l)}+\ann(D,A_\infty) = \phi_l(\chi_{\set{g}})$. By functional calculus, we may assume that each $f_{\bar{g}}^{(l)}$ is a positive contraction. Then we have 
\[
\phi_0(\eins)+\dots+\phi_d(\eins) = \sum_{l=0}^d\sum_{\bar{g}\in G/H} f_{\bar{g}}^{(l)}+\ann(D,A_\infty),
\] 
and thus the property $\phi_0(\eins)+\dots+\phi_d(\eins)=\eins$ translates to condition (2a) by~\ref{tensoring}. For each $l=0,\dots,d$ and $\bar{g}\neq\bar{h}$ in $G/H$, the characteristic functions $\chi_{\bar{g}},\chi_{\bar{h}}\in\CC(G/H)$ are orthogonal. Since $\phi_l$ is an order zero map, we thus have $f_{\bar{g}}^{(l)}f_{\bar{h}}^{(l)}+\ann(D,A_\infty) = \phi_l(\chi_{\bar{g}})\phi_l(\chi_{\bar{h}}) = 0+\ann(D,A_\infty)$. This implies condition (2b). Since $\phi_l$ is equivariant with respect to the $G$-shift on $G/H$, we get for all $g\in G,~\bar{h}\in G/H$ that
\[
\alpha_{\infty,g}(f_{\bar{h}}^{(l)})+\ann(D,A_\infty) = \tilde{\alpha}_{\infty,g}(\phi_l(\chi_{\set{\bar{h}}})) = \phi_l(\chi_{\set{\quer{gh}}}) =  f_{ {}_{\quer{gh}} }^{(l)}+\ann(D,A_\infty).
\]
This implies condition (2c).

$(2)\implies (1):$ Let $(f_{\bar{g}}^{(l)})_{\bar{g}\in G/H}^{l=0,\dots,d}$ be positive contractions in $A_\infty\cap D'$ satisfying the conditions (2a), (2b) and (2c). Then the same calculations as above, only read in the reverse order, show that conditions (2b) and (2c) imply that for $l=0,\dots,d$, the linear map $\phi_l: \CC(G/H)\to F(D,A_\infty)$ given by $\phi_l(\chi_{\set{\bar{g}}})=f_{\bar{g}}^{(l)}+\ann(D,A_\infty)$ is c.p.c.~order zero and equivariant with respect to the $G$-shift and $\tilde{\alpha}_\infty$. Moreover, condition (2a) implies that $\eins=\phi_0(\eins)+\dots+\phi_d(\eins)$ in $F(D,A_\infty)$.

$(2)\implies (3):$ Let $\eps>0$, $M\fin G$ and $F\fin A$ be given. Let $D\subset A$ be a separable, $\alpha$-invariant \cstar-subalgebra containing $F$.
Choose positive contractions $(f_{\bar{g}}^{(l)})_{\bar{g}\in G/H}^{l=0,\dots,d}$ in $A_\infty\cap D'$ satisfying the conditions (2a), (2b) and (2c). For each $l=0,\dots,d$ and $\bar{g}\in G/H$, the element $f_{\bar{g}}^{(l)}$ has a representing sequence $(f_{\bar{g},n}^{(l)})\in\ell^\infty(\IN,A)$, which we may assume by functional calculus to consist of positive contractions. Then conditions (2a), (2b) and (2c) translate to
\begin{itemize}
	\item $\left(\sum_{l=0}^d \sum_{\bar{g}\in G/H} f_{\bar{g},n}^{(l)} \right)\cdot a \stackrel{n\to\infty}{\longrightarrow} a$ for all $a\in D$;
	\item $f_{\bar{g},n}^{(l)}f_{\bar{h},n}^{(l)}a \stackrel{n\to\infty}{\longrightarrow} 0$ for all $a\in D,~l=0,\dots,d$ and $\bar{g}\neq\bar{h}$ in $G/H$;
	\item $(\alpha_g(f_{\bar{h},n}^{(l)}) - f_{ {}_{\quer{gh},n} }^{(l)})\cdot a \stackrel{n\to\infty}{\longrightarrow} 0$ for all $a\in D,~l=0,\dots,d,~\bar{h}\in G/H$ and $g\in G$.
\end{itemize}
Moreover, the fact that $f_{\bar{g}}^{(l)}\in A_\infty\cap D'$, translates to
\begin{itemize}
	\item $[f_{\bar{g},n}^{(l)},a]\stackrel{n\to\infty}{\longrightarrow} 0$ for all $a\in D,~l=0,\dots,d$ and $\bar{g}\in G/H$.
\end{itemize}
Since $F$ is a subset of $D$, and both $F$ and $M$ are finite, we can choose some large number $n$ such that the elements $(f_{\bar{g},n}^{(l)})_{\bar{g}\in G/H}^{l=0,\dots,d}$ satisfy conditions (3a), (3b), (3c) and (3d).

$(3)\implies (2):$ Let $S\subset G$ a be a generating set of $G$. Assume that (3) holds for all finite subsets $M$ of $S$. Since $S$ is countable, choose an increasing sequence of finite sets $M_n\fin S$ with $S=\bigcup_{n\in\IN} M_n$.

Let $D\subset A$ be a separable, $\alpha$-invariant \cstar-subalgebra. Let $F_n\fin D$ be an increasing sequence of finite subsets whose union $\bigcup_{k\in\IN} F_n\subset D$ is dense. For every $n$, choose positive contractions $(f_{\bar{g},n}^{(l)})_{\bar{g}\in G/H}^{l=0,\dots,d}$ in $A$ satisfying conditions (3a), (3b), (3c) and (3d) for the triple $(\eps,M,F)=(\frac{1}{n},M_n,F_n)$. Set $f_{\bar{g}}^{(l)}=[(f_{\bar{g},n}^{(l)})_n]\in A_\infty$ for each $l=0,\dots,d$ and $\bar{g}\in G/H$. Then by choice of the sequence $(f_{\bar{g},n}^{(l)})_n$, we have:
\begin{itemize}
\item $\left(\sum_{l=0}^d \sum_{\bar{g}\in G/H} f_{\bar{g},n}^{(l)} \right)\cdot a \stackrel{n\to\infty}{\longrightarrow} a$ for all $a\in\bigcup_{k\in\IN} F_k$;
	\item $f_{\bar{g},n}^{(l)}f_{\bar{h},n}^{(l)}a \stackrel{n\to\infty}{\longrightarrow} 0$ for all $a\in\bigcup_{k\in\IN} F_k,~l=0,\dots,d$ and $\bar{g}\neq\bar{h}$ in $G/H$;
	\item $(\alpha_g(f_{\bar{h},n}^{(l)}) - f_{ {}_{\quer{gh},n} }^{(l)})\cdot a \stackrel{n\to\infty}{\longrightarrow} 0$ for all $a\in \bigcup_{k\in\IN} F_k,~l=0,\dots,d,~\bar{h}\in G/H$ and $g\in \bigcup_{k\in\IN} M_k=S$;
	\item $[f_{\bar{g},n}^{(l)},a]\stackrel{n\to\infty}{\longrightarrow} 0$ for all $a\in\bigcup_{k\in\IN} F_k,~l=0,\dots,d$ and $\bar{g}\in G/H$.
\end{itemize}
For the correspond elements in the sequence algebra, we thus have
\begin{enumerate}
	\item[(2a')] $\left(\sum_{l=0}^d \sum_{\bar{g}\in G/H} f_{\bar{g}}^{(l)} \right)\cdot a = a$ for all $a\in\bigcup_{k\in\IN} F_k$;
	\item[(2b')] $f_{\bar{g}}^{(l)}f_{\bar{h}}^{(l)}a = 0$ for all $a\in\bigcup_{k\in\IN} F_k,~l=0,\dots,d$ and $\bar{g}\neq\bar{h}$ in $G/H$;
	\item[(2c')] $\alpha_g(f_{\bar{h}}^{(l)})\cdot a = f_{ {}_{\quer{gh}} }^{(l)}\cdot a$ for all $a\in \bigcup_{k\in\IN} F_k,~l=0,\dots,d,~\bar{h}\in G/H$ and $g\in \bigcup_{k\in\IN} M_k=S$;
	\item[(2d')] $[f_{\bar{g}}^{(l)},a]=0$ for all $a\in\bigcup_{k\in\IN} F_k,~l=0,\dots,d$ and $\bar{g}\in G/H$.
\end{enumerate}
Since the union $\bigcup_{k\in\IN} F_k\subset D$ is dense, it follows by continuity of multiplication that $f_{\bar{g}}^{(l)} \in A_\infty\cap D'$ for all $l=0,\dots,d$ and $\bar{g}\in G/H$, and that the elements $(f_{\bar{g}}^{(l)})_{\bar{g}\in G/H}^{l=0,\dots,d}$ satisfy conditions (2a), (2b) and (2c). 
Since condition (2c') holds for all $g$ in a generating set and this condition passes to products of elements, it follows that condition (2c') even holds for all $g\in G$.
\end{proof}

Applying~\ref{quantifier dimrok}(3), we can now see that our definition of Rokhlin dimension indeed extends the original definitions (cf.~\ref{dimrok finite} and~\ref{dimrok Z^m}) due to Hirshberg, Winter and the third author.

\rem \label{dimrok central sequence}
Let $A,G$ and $(\alpha,w): G\curvearrowright A$ be as above. In the case that $A$ is separable, we make two observations:
\begin{enumerate}
\item The map $g\mapsto \tilde{\alpha}_{g,\infty}\in\Aut(F_\infty(A))$ defines a genuine action, which we denote by $\tilde{\alpha}_\infty$.
\item If $H$ is a finite-index subgroup in $G$, then $\dimrok(\alpha, H)\leq d$ if and only if there exist equivariant c.p.c.~order zero maps 
\[
\phi_l: (\CC(G/H),G\text{-shift})\longrightarrow (F_\infty(A),\tilde{\alpha}_\infty)\quad(l=0,\dots,d)
\]
with $\phi_0(\eins)+\dots+\phi_d(\eins)=\eins$.
\end{enumerate}

\begin{proof}[Proof of (2).]
In this case, $A$ is trivially the maximal separable, $\alpha$-invariant \cstar-subalgebra in itself. The ``only if'' part is obvious. For the ``if'' part, choose positive contractions $(f_{\bar{g}}^{(l)})_{\bar{g}\in G/H}^{l=0,\dots,d}$ in $A_\infty\cap A'$ satisfying the conditions (2a), (2b) and (2c) from~\ref{quantifier dimrok} for $A$ in place of $D$. Given any other separable, $\alpha$-invariant \cstar-subalgebra $D$ of $A$, we then have $f_{\bar{g}}^{(l)}\in A_\infty\cap D'$ and the conditions (2a), (2b) and (2c) hold for this subalgebra. Hence $\dimrok(\alpha,H)\leq d$.
\end{proof}

{
\lemma \label{dimrok monotonie}
Let $A,G$ and $(\alpha,w): G\curvearrowright A$ be as before. If $H_2\subset H_1\subset G$ are two finite-index subgroups, then $\dimrok(\alpha, H_1)\leq\dimrok(\alpha, H_2)$. 
}
\begin{proof}
Since there exists an equivariant and unital $*$-homomorphism
\[
(\CC(G/H_1),G\text{-shift})\into (\CC(G/H_2),G\text{-shift}),
\]
this follows from the definition.
\end{proof}

\defi \label{dimrok allgemein} Let $A$ be a \cstar-algebra, $G$ a countable, residually finite group and $(\alpha,w): G\curvearrowright A$ a cocycle action. Let $\sigma=(G_n)_n\in\Lambda(G)$ be a regular approximation. We define the Rokhlin dimension of $\alpha$ along $\sigma$ as
\[
\dimrok(\alpha, \sigma) = \sup_{n\in\IN}~ \dimrok(\alpha, G_n).
\]
Moreover, we define the (full) Rokhlin dimension of $\alpha$ as
\[
\dimrok(\alpha) = \sup\set{ \dimrok(\alpha, H) ~|~ H\leq G,~ [G:H]<\infty }.
\]

\begin{rem} 
\label{dimrok-domi}
It follows from~\ref{dimrok monotonie} that if $\sigma$ is dominating in the sense of~\ref{domi}, then $\dimrok(\alpha, \sigma)=\dimrok(\alpha)$.
\end{rem}

The next example shows that in general, the value $\dimrok(\alpha, \sigma)$ can indeed depend on $\sigma$. Later in Section~\ref{sec:dimrok-amdim}, however, we will see that this dependence is rather mild for certain groups.

\begin{example} 
Let $p\geq 2$ be a natural number. Consider the UHF algebra 
\[
M_{p^\infty} = \bigotimes_\IN M_p \cong \bigotimes_{n\in\IN} \bigotimes_\IN M_{p^n}.
\]
For all $n\in\IN$, let $u_n\in M_{p^n}$ be the shifting unitary given by
\[
u_n = e_{1,p^n}+\sum_{k=2}^{p^n} e_{k,k-1}.
\]
Consider
\[
\alpha\in\Aut(A)\quad\text{via}\quad\alpha=\bigotimes_{n\in\IN} \bigotimes_\IN \ad(u_n).
\]
This automorphism defines an action of $G=\IZ$. For the sequence of subgroups $G_n=p^n\IZ$, one has $\dimrok(\alpha, G_n)=0$. One can see this by using the equivariant, unital diagonal inclusion $(\CC(\IZ/p^n\IZ),G\text{-shift})\subset (M_{p^n},\ad(u_n))$, the rest follows from the construction of the action.
 However, if $q\geq 2$ is a number that does not divide $p$, it is easy to see that $\dimrok(\alpha, q\IZ)>0$, since otherwise the unit would be divisible by $q$ in the $K_0$-group of $M_{p^\infty}$. But this is not true. In particular, one has $\dimrok(\alpha, H)>0$ for all $H\neq p^n\IZ$.

However, it will be a consequence of~\ref{dimrok amdim} that $\dimrok(\alpha)=1$. We remark that this fact also follows from an early unpublished result by Matui and Sato, see the introduction of~\cite{HirshbergWinterZacharias14}. Moreover, this would also follow from a more recent result due to Liao~\cite{Liao15}.
\end{example}

\section{Nuclear dimension of the crossed product \cstar-algebra}\label{sec:dimnuc}

In this section, we show a permanence property of cocycle actions with finite Rokhlin dimension with respect to \cstar-algebras of finite nuclear dimension, under the assumption that the acting group has a finite dimensional box space. This extends important results from~\cite{HirshbergWinterZacharias14} and~\cite{Szabo14}.
But first, we need a slightly more flexible characterization of nuclear dimension:

{
\prop[cf.\ {\cite[2.5]{TikuisisWinter14} and~\cite[A.4]{HirshbergWinterZacharias14}}] \label{dimnuc sequence} 
Let $A$ be a \cstar-algebra and $r\in\IN$ a natural number. Then $\dimnuc(A)\leq r$, if and only if the following holds:

For all $F\fin A$ and $\delta>0$, there exists a finite dimensional \cstar-algebra $\CF$, a c.p.c. map $\psi: A\to \CF$ and c.p.c. order zero maps $\phi^{(0)},\dots,\phi^{(r)}: \CF\to A_\infty$ such that
\[
a =_\delta \sum_{l=0}^r \phi^{(l)}\circ\psi(a)\quad\text{for all}~ a\in F.
\]
}
\begin{proof}
Since the 'only if' part is trivial, we show the 'if' part.
Let $F\fin A$ and $\delta>0$ be arbitrary. Find $\CF,\psi,\phi^{(0)},\dots,\phi^{(r)}$ as in the assertion. Since the cone over $\CF$ is projective~\cite[10.1.11 and 10.2.1]{Loring}, we can find sequences of order zero maps $\phi_n^{(l)}: \CF\to A$ for $l=0,\dots,r$
with $\phi^{(l)}(x) = [(\phi^{(l)}_n(x))_n]$ for all $x\in\CF$. (This follows from the structure theorem for order zero maps~\cite[2.3 and 3.1]{WinterZacharias09} or~\cite[1.2.4]{Winter09}.) In particular, it follows that
\[
\limsup_{n\to\infty}~\|a-\sum_{l=0}^r\phi^{(l)}_n\circ\psi(a)\|\leq\delta\quad\text{for all}~a\in F.
\]
So there exists $n$ with
\[
a =_{2\delta} \sum_{l=0}^r \phi^{(l)}_n\circ\psi(a)\quad\text{for all}~a\in F.
\]
Since $F$ and $\delta$ were arbitrary, this shows $\dimnuc(A)\leq r$.
\end{proof}

\begin{samepage}
{
\theorem[cf.\ {\cite[4.1]{HirshbergWinterZacharias14} and~\cite[1.10]{Szabo14}}] \label{Hauptsatz}
Let $A$ be a \cstar-algebra, $G$ a countable, residually finite group and $(\alpha,w): G\curvearrowright A$ a cocycle action. Let $\sigma=(G_n)_n$ be a regular approximation of $G$.
Then the following estimate holds:
\[
\dimnuceins(A\rtimes_{\alpha,w} G) \leq \asdimeins(\square_\sigma G)\cdot \dimrokeins(\alpha, \sigma)\cdot\dimnuceins(A).
\]
In particular, if $G$ is finitely generated, we get the estimate
\[
\dimnuceins(A\rtimes_{\alpha,w} G) \leq \asdimeins(\square_s G)\cdot \dimrokeins(\alpha)\cdot\dimnuceins(A).
\]
}
\end{samepage}
\begin{proof}
We may assume that $s=\asdim(\square_\sigma G),~ r=\dimnuc(A)$ and $d=\dimrok(\alpha, \sigma)$ are all finite, or else the statement is trivial. As a first consequence of this assumption, $G$ is seen to be amenable by~\ref{finite_asdim_box_space_implies_amenability}, and thus $A\rtimes_{\alpha,w} G $ is naturally isomorphic to $A\rtimes_{r,\alpha,w} G$.

Let $F\fin A\rtimes_{\alpha,w} G$ and $\delta>0$ be given. In order to show the assertion, we show that there exists a finite dimensional \cstar-algebra $\CF$ with a c.p.~approximation $(\CF,\psi,\phi)$ for $F$ up to $\delta$ on $A\rtimes_{\alpha,w} G$ as in~\ref{dimnuc sequence}.

Since the purely algebraic crossed product is dense in $A\rtimes_{\alpha,w}G$ and is linearly generated by terms of the form $au_g$ for $a\in A$ and $g\in G$, we may assume without loss of generality that $F\subset\set{au_g ~|~ a\in F', g\in M }$ for two finite sets $F'\fin A_1$ and $M\fin G$. 

The square root function $\sqrt{\cdot~}: [0,1]\to [0,1]$ is uniformly continuous. Hence, we may choose $\eps>0$ so small that $|\sqrt{s}-\sqrt{t}|\leq\delta$, whenever $|s-t|\leq\eps$.
Recall that the sequence $(G_n)_n$ admits decay functions in the sense of \ref{decay functions}(3). Hence there exists $n\in\IN$ and functions $\mu^{(j)}: G\to [0,1]$ for $j=0,\dots,s$ such that
\begin{align}
 & \label{eq:D1} \supp(\mu^{(j)})\cap\supp(\mu^{(j)})h=\emptyset && \text{for all}\ j=0,\dots,s \text{\ and\ } h\in G_n\setminus\set{1} ;\\
 & \label{eq:D2} \sum_{j=0}^s \sum_{h\in G_n}  \mu^{(j)}(gh)=1 && \text{for all}\ g\in G ;\\
 & \label{eq:D3} \|\mu^{(j)}-\mu^{(j)}(g^{-1}\cdot\_\!\_)\|_\infty\leq\eps && \text{for all}\ j=0,\dots,s \text{\ and\ } g\in M .
\end{align}

Consider the finite sets
\begin{equation} \label{eq:E1}
 B_n^{(j)} = \supp(\mu^{(j)}),\quad B_n = \bigcup_{j=0}^s B_n^{(j)} \quad\fin G
\end{equation}
and
\[
\tilde{F} = \set{ \alpha_{h^{-1}}(a)w(h^{-1},g)~|~ a\in F',~ g\in M,~ h\in B_n} \fin A.
\]

Let $A$ act faithfully on a Hilbert space $H$ and let $A\rtimes_{\alpha,w} G \cong A\rtimes_{r,\alpha,w} G$ be embedded into $\CB(\ell^2(G)\otimes H)$ as in~\ref{leftregular}. Let $Q_j\in\CB(\ell^2(G)\otimes H)$ be the projection onto the subspace $\ell^2(B_n^{(j)})\otimes H$.
Then $x\mapsto Q_jxQ_j$ defines a c.p.c.~map $\Psi_j: A\rtimes_{r,\alpha,w}G \to M_{|B_n^{(j)}|}(A)$. More specifically, we have for all $a\in A, g\in G$ and $j=0,\dots,s$ that
\begin{equation} \label{eq:E2}
 \begin{array}{lll}
\Psi_j(a u_g)
&=& \displaystyle Q_j\Big[\sum_{h\in G} e_{h,g^{-1}h}\otimes\alpha_{h^{-1}}(a)w(h^{-1},g)\Big]Q_j \\\\
&=&  \displaystyle \sum_{h\in B_n^{(j)}: \atop g^{-1}h\in B_n^{(j)}} e_{h,g^{-1}h}\otimes\alpha_{h^{-1}}(a)w(h^{-1},g) \\\\
&=& \displaystyle \sum_{h\in B_n^{(j)}\cap gB_n^{(j)}} e_{h,g^{-1}h}\otimes\alpha_{h^{-1}}(a)w(h^{-1},g). 
\end{array}
\end{equation}

For $j=0,\dots,s$, define the diagonal matrices $D_j\in M_{|B_n|}(\IC)$ by $(D_j)_{h,h} = \mu^{(j)}(h)$. By our choice of $\eps$ and \eqref{eq:D3}, we have
\begin{equation} \label{eq:E3}
 \| \sqrt{\mu^{(j)}}-\sqrt{\mu^{(j)}(g^{-1}\cdot\_\!\_)}\|_\infty\leq\delta\quad\text{for all}\; g\in M.
\end{equation}
It follows for all $g\in M$ and $a\in A$ that
\[
\def\arraystretch{2}
\begin{array}{lcl}
\multicolumn{3}{l}{\|[\sqrt{D_j},Q_jau_gQ_j]\|} \\
\hspace{2mm}&\stackrel{\eqref{eq:E2}}{=}& \displaystyle \left\| \sum_{h\in B_n\cap gB_n} \left( \sqrt{\mu^{(j)}(h)}-\sqrt{\mu^{(j)}(g^{-1}h)} \right) e_{h,g^{-1}h}\otimes \alpha_{h^{-1}}(a)w(h^{-1},g) \right\| \\
&\leq& \max\set{ ~\left| \sqrt{\mu^{(j)}(h)}-\sqrt{\mu^{(j)}(g^{-1}h)} \right|~\bigl|~ h\in B_n\cap gB_n}\cdot \|a\| \\
&\stackrel{\eqref{eq:E3}}{\leq}& \delta\|a\|.
\end{array}
\]
\phantomsection\label{theta}
Define the c.p.c.~map $\theta_j: A\rtimes_{r,\alpha,w} G \to M_{|B_n^{(j)}|}(A)$ by $\theta_j(x)=\sqrt{D_j}\Psi_j(x)\sqrt{D_j}$. By the previous calculation, we have
\begin{equation} \label{eq:E4}
 \|\theta_j(au_g)-D_jQ_jau_gQ_j\|\leq \delta\cdot\|a\|\quad\text{for all}\quad g\in M\;\text{and}\; a\in A.
\end{equation}

By \eqref{eq:E2}, we have for each $au_g\in F$ that the matrix coefficients of $\theta_j(au_g)\in M_{|B_n^{(j)}|}(A)$ are all in $\tilde{F}$. 
\phantomsection\label{phipsi} Choose an $r$-decomposable c.p.~approximation $(\CF,\psi,\phi)$ for $\tilde{F}$ up to $\delta/[G:G_n]$, i.e.~a finite dimensional \cstar-algebra $\CF$ and c.p.~maps $A\stackrel{\psi}{\to}\CF\stackrel{\phi^{(i)}}{\to} A$ for $i=0,\dots,r$ such that $\psi$ is c.p.c., the maps $\phi^{(i)}$ are c.p.c.~order zero and 
\begin{equation} \label{eq:E5}
 \|x-(\phi\circ\psi)(x)\|\leq\frac{\delta}{[G:G_n]}\quad\text{for all}~x\in\tilde{F},
\end{equation}
where $\phi$ denotes the sum $\phi= \phi^{(0)}+\dots+\phi^{(r)}$.

For all $j=0,\dots,s$ and $i=0,\dots,r$ let 
\begin{equation} \label{eq:E6}
 \psi_{j} = \id_{|B_n^{(j)}|}\otimes\psi: M_{|B_n^{(j)}|}\otimes A\to M_{|B_n^{(j)}|}\otimes\CF
\end{equation}
and 
\begin{equation} \label{eq:E7}
 \phi_{j,i} = \id_{|B_n^{(j)}|}\otimes\phi^{(i)}: M_{|B_n^{(j)}|}\otimes\CF\to M_{|B_n^{(j)}|}\otimes A
\end{equation}
denote the amplifications of $\psi$ and $\phi^{(i)}$. 

Let $D\subset A$ be a separable, $\alpha$-invariant \cstar-algebra containing $\tilde{F}$, the image of $\phi^{(i)}$ for each $i=0,\dots,r$ and such that $D$ is closed under multiplication with $\set{w(g,h) ~|~ g,h\in G}$. 
Since $\dimrok(\alpha, G_n)\leq d$, we can use~\ref{quantifier dimrok}(2) and find positive contractions $(f^{(l)}_{\bar{h}})_{\bar{h}\in G/G_n}^{l=0,\dots,d}$ in $A_\infty\cap D'$ satisfying the relations
\begin{align}
 &  \label{eq:R1} \dst \Big( \sum_{l=0}^d\sum_{\bar{h}\in G/G_n} f^{(l)}_{\bar{h}} \Big)a = a && \text{for\ all\ } a \in D ;\\
 &  \label{eq:R2} f^{(l)}_{\bar{g}}f^{(l)}_{\bar{h}} a =0 && \text{for\ all\ } a \in D ,\ \bar{g}\neq\bar{h} \text{\ and \ } l=0,\dots,d; \\
 &  \label{eq:R3} \alpha_{\infty,g}(f^{(l)}_{\bar{h}}) a = f^{(l)}_{ {}_{\quer{gh}} } a && \text{for\ all\ } a \in D ,\ l=0,\dots,d \text{\ and \ } g,h\in G .
\end{align}

\noindent
Now fix any $j\in\set{0,\dots,s}$ and $l\in\set{0,\dots,d}$. Define the map 
\begin{equation} \label{eq:E8}
 \sigma_{j,l}: M_{|B_n^{(j)}|}(D)\to (A\rtimes_{\alpha,w} G)_\infty \quad\text{by}\quad \sigma_{j,l}(e_{g,h}\otimes a) = u_{g^{-1}}^* f^{(l)}_1 a u_{h^{-1}}. 
\footnote{Keep in mind that in a twisted crossed product, the unitaries $u_g$ and $u_{g^{-1}}^*$ do not necessarily coincide. } 
\end{equation}

Note that it is c.p.~since $\sigma_{j,l}(x)=v_{j,l} x v_{j,l}^*$ for the $1\times |B_n^{(j)}|$-matrix $v_{j,l}= ( u_{h^{-1}}^* f_1^{(l) 1/2})_{h\in B_n^{(j)}}$.
We now show that $\sigma_{j,l}$ is order zero. We denote $f^{(l)} = \sum_{\bar{h}\in G/G_n} f^{(l)}_{\bar{h}}$, which, by virtue of \eqref{eq:R3}, is an element fixed by $\alpha_\infty$ upon multiplying with an element in $D$. Let $h_1,h_2,h_3,h_4\in B_n^{(j)}$ and $a,b\in A$. We have
\begin{longtable}{lcl}
\multicolumn{3}{l}{ $\sigma_{j,l}(e_{h_1,h_2}\otimes a)\sigma_{j,l}(e_{h_3,h_4}\otimes b)$ } \vspace{3mm}\\
\hspace{3mm}
& $=$ & $ u_{h_1^{-1}}^* f^{(l)}_1 a u_{h_2^{-1}} \cdot u_{h_3^{-1}}^* f^{(l)}_1 b u_{h_4^{-1}} $ \vspace{3mm}\\
& $=$ & $ u_{h_1^{-1}}^* u_{h_2^{-1}} (\alpha_{h_2^{-1}})_\infty^{-1}(af_{1}^{(l)}) \cdot(\alpha_{h_3^{-1}})_\infty^{-1}(f^{(l)}_1b) u_{h_3^{-1}}^* u_{h_4^{-1}} $ \vspace{3mm}\\
& $\stackrel{\eqref{eq:R3}}{=}$ & $ u_{h_1^{-1}}^* u_{h_2^{-1}} (\alpha_{h_2^{-1}})^{-1}(a) f^{(l)}_{\bar{h}_2} f^{(l)}_{\bar{h}_3} (\alpha_{h_3^{-1}})^{-1}(b) u_{h_3^{-1}}^* u_{h_4^{-1}} $ \vspace{3mm}\\
& $\stackrel{\eqref{eq:R2}}{=}$ & $ \delta_{\bar{h}_2,\bar{h}_3} \cdot
u_{h_1^{-1}}^* u_{h_2^{-1}} (\alpha_{h_2^{-1}})^{-1}(a)\cdot (f^{(l)}_{\bar{h}_2})^2 \cdot (\alpha_{h_2^{-1}})^{-1}(b) u_{h_2^{-1}}^* u_{h_4^{-1}} $ \vspace{3mm}\\
& $\stackrel{\eqref{eq:R3}}{=}$ & $ \delta_{\bar{h}_2,\bar{h}_3} \cdot
u_{h_1^{-1}}^* u_{h_2^{-1}} (\alpha_{h_2^{-1}})_\infty^{-1}(a (f^{(l)}_{1})^2) \cdot (\alpha_{h_2^{-1}})^{-1}(b) u_{h_2^{-1}}^* u_{h_4^{-1}} $ \vspace{3mm}\\
& $=$ & $ \delta_{\bar{h}_2,\bar{h}_3} \cdot
u_{h_1^{-1}}^* (f^{(l)}_1)^2 a u_{h_2^{-1}} \cdot u_{h_2^{-1}}^* b u_{h_4^{-1}} $ \vspace{3mm}\\
& $\stackrel{\eqref{eq:R3}}{=}$ & $ \delta_{\bar{h}_2,\bar{h}_3} \cdot
(f^{(l)}_{\bar{h}_1})^2 u_{h_1^{-1}}^* a b u_{h_4^{-1}} $ \vspace{3mm}\\
& $\stackrel{\mathrm{\eqref{eq:R2}}}{=}$ & $ \delta_{\bar{h}_2,\bar{h}_3} \cdot
f^{(l)} f^{(l)}_{\bar{h}_1} u_{h_1^{-1}}^* a b u_{h_4^{-1}} $ \vspace{3mm}\\
& $\stackrel{\eqref{eq:R3}}{=}$ & $ \delta_{\bar{h}_2,\bar{h}_3} \cdot
f^{(l)} u_{h_1^{-1}}^* f^{(l)}_1 a b u_{h_4^{-1}} $ \vspace{3mm}\\
& $\stackrel{\eqref{eq:R3}, \eqref{eq:D1}, \eqref{eq:E1}}{=}$ & $f^{(l)} \cdot\sigma_{j,l}\bigl( (e_{h_1,h_2}\otimes a)(e_{h_3,h_4}\otimes b) \bigl).$ 
\end{longtable}
\noindent
Note that for the last step of the above calculation, we have used \eqref{eq:D1} in that the canonical map $G\onto G/G_n$ is injective on each set $B_n^{(j)}$. Since this calculation involves linear generators of the \cstar-algebra $M_{|B_n^{(j)}|}(D)$, we obtain the equation
\[
\sigma_{j,l}(x)\sigma_{j,l}(y)=f^{(l)}\cdot\sigma_{j,l}(xy)\quad\text{for all}~x,y\in M_{|B_n^{(j)}|}(D).
\]
Hence each $\sigma_{j,l}$ is order zero and so is $\sigma_{j,l}\circ\phi_{j,i}$ for all $i=0,\dots, r,\; j=0,\dots,s$ and $l=0,\dots,d$. Note that this composition makes sense because of our assumption that $D$ contains the images of each $\phi^{(i)}$, and so the image of each $\phi_{j,i}$ is contained in the domain of $\sigma_{j,l}$.

As the next step, we would like to show that the maps in the diagram
\begin{equation} \label{eq:E9}
      \xymatrix{
	A\rtimes_{\alpha,w} G \ar[rr] \ar[rd]^{\bigoplus_j \psi_{j}\circ\theta_j =: \Theta} & & (A\rtimes_{\alpha,w} G)_\infty \\
	 & \bigoplus_{j=0}^s M_{|B_n^{(j)}|}(\CF) \ar[ru]_{\hspace{6mm} \sum_{j,l,i}  \sigma_{j,l}\circ\phi_{j,i} =: \Phi} &
	}
\end{equation}
give rise to a good c.p.~ approximation of $F$.
For this, we first calculate that for every contraction $x\in D$ and every $g \in M$ that
\begin{flalign} \label{eq:E10}
  \Big\|\sum_{j=0}^s\sum_{h\in B_n^{(j)}\setminus gB_n^{(j)}} \mu^{(j)}(h) f^{(l)}_{\bar{h}}x \Big\| &&
\end{flalign}
\begin{longtable}{cl} 
 \vspace{2mm}
 $\stackrel{\mathrm{\eqref{eq:E1}}}{\leq}$ & $\displaystyle (s+1)\cdot \max_{0\leq j\leq s}\; \Big\| \sum_{h\in B_n^{(j)}\setminus gB_n^{(j)}} \mu^{(j)}(h) f^{(l)}_{\bar{h}} x \Big\|$ \\
 $\stackrel{\mathrm{\eqref{eq:R2}}}{\leq}$ & $(s+1)\cdot\max\set{ \mu^{(j)}(h) ~\big|~ h\in B_n^{(j)}\setminus gB_n^{(j)}, j=0,\dots,s}$ \vspace{2mm}\\
 $\stackrel{\mathrm{\eqref{eq:E1}}}{=}$ & $(s+1)\cdot\max\set{ \| \mu^{(j)}-\mu^{(j)}(g^{-1} \cdot\_\!\_)\|_\infty ~\big|~ j=0,\dots,s }$ \vspace{2mm}\\
 $\stackrel{\mathrm{\eqref{eq:D3}}}{\leq}$ & $(s+1)\eps ~ \leq ~ (s+1)\delta$. 
\end{longtable}
\noindent
Observe that by the properties of the functions $\mu^{(j)}$, we have for all $l$ that 
\begin{align} \label{eq:E11}
f^{(l)} = \sum_{\bar{g}\in G/G_n} f_{\bar{g}}^{(l)} &\stackrel{\eqref{eq:D2}}{=} \sum_{\bar{g}\in G/G_n} \sum_{j=0}^s \sum_{h\in\bar{g}}  \mu^{(j)}(h) f_{\bar{g}}^{(l)} 
&\stackrel{\eqref{eq:E1}}{=} \sum_{j=0}^s \sum_{h\in B_n^{(j)}} \mu^{(j)}(h) f^{(l)}_{\bar{h}}. 
\end{align}
Note that the last equation follows from the fact that for each $j=0,\dots,s$, the set $B_n^{(j)}$ is contained in a finite set of representatives of $G/G_n$ by \eqref{eq:D1} and \eqref{eq:E1}.
It follows for all $g\in M$ and $x\in D$ that
\begin{equation} \label{eq:E12}
 \Big\| x - \Big( \sum_{l=0}^d\sum_{j=0}^s \sum_{h\in B_n^{(j)}\cap gB_n^{(j)}} \mu^{(j)}(h) f^{(l)}_{\bar{h}} \Big) \cdot x \Big\|
\end{equation}
\[
  \begin{array}{cl} 
    \stackrel{\eqref{eq:E11},\eqref{eq:R1}}{=}& \displaystyle \Big\| \Big( \sum_{l=0}^d \sum_{j=0}^s \sum_{h\in B_n^{(j)}\setminus gB_n^{(j)}} \mu^{(j)}(h) f^{(l)}_{\bar{h}} \Big)\cdot x \Big\| \\
    \stackrel{\eqref{eq:E10}}{\leq} & {(s+1)(d+1)\delta} \; .
  \end{array}
\]
Now let $au_g\in F$ and recall the definition of the maps $\Theta$ and $\Phi$ from the approximation diagram \eqref{eq:E9}. To simplify notation in the following calculation, the sums over the indices $l$, $j$, and $i$ always stand for $\displaystyle \sum_{l=0}^d$, $\displaystyle \sum_{j=0}^s$ and $\displaystyle \sum_{i=0}^r$, while the symbol $\displaystyle \sum_{h}$ stands for $\displaystyle \sum_{h\in B_n^{(j)}\cap gB_n^{(j)}}$. With such conventions, we have
\begin{longtable}{ll}
\multicolumn{2}{l}{$\Phi\circ\Theta(au_g)$} \\\\

$\stackrel{\eqref{eq:E9}}{=}$ & \hspace{-30mm}$\displaystyle \sum_{l,j,i} (\sigma_{j,l}\circ\phi_{j,i}\circ\psi_{j}\circ\theta_j)(au_g)$ \\\\

$\stackrel{\eqref{eq:E4}}{=}_{(s+1)(d+1)(r+1)\delta}$ & \hspace{-10mm}$\displaystyle \sum_{l,j,i} (\sigma_{j,l}\circ\phi_{j,i}\circ\psi_{j})(D_jQ_jau_gQ_j)$ \\\\

$\stackrel{\eqref{eq:E2}}{=}$ & \hspace{-30mm}$\displaystyle \sum_{l,j,i} (\sigma_{j,l}\circ\phi_{j,i}\circ\psi_{j}) \left( \sum_{h} \mu^{(j)}(h)\cdot e_{h,g^{-1}h}\otimes\alpha_{h^{-1}}(a)w(h^{-1},g) \right)$ \\\\

$\hspace{-2.8mm}\stackrel{\eqref{eq:E6},\eqref{eq:E7}}{=}$ & \hspace{-30mm}$\displaystyle \sum_{l,j,i} \sigma_{j,l} \left( \sum_{h} \mu^{(j)}(h)\cdot e_{h,g^{-1}h}\otimes (\phi^{(i)}\circ\psi)\Big[ \alpha_{h^{-1}}(a)w(h^{-1},g) \Big] \right)$ \\\\

$\stackrel{\eqref{eq:E8}}{=}$ & \hspace{-30mm}$\displaystyle \sum_{l,j,i} \sum_{h} \mu^{(j)}(h)\cdot u_{h^{-1}}^* f^{(l)}_1 \Big[ (\phi^{(i)}\circ\psi)\bigl( \alpha_{h^{-1}}(a)w(h^{-1},g) \bigl) \Big] u_{h^{-1}g}$ \\\\

$\stackrel{\eqref{eq:R3}}{=}$ & \hspace{-30mm}$\displaystyle \sum_{l,j} \sum_{h} \mu^{(j)}(h)f_{\bar{h}}^{(l)}\cdot u_{h^{-1}}^* \Big[ (\phi\circ\psi)\bigl(\alpha_{h^{-1}}(a)w(h^{-1},g) \bigl) \Big] u_{h^{-1}g}$ \\\\

$\stackrel{\eqref{eq:E5}}{=}_{\big( (s+1)(d+1)|B_n^{(j)}|\cdot \frac{\delta}{[G:G_n]} \big)}$ & \hspace{-1mm}$\displaystyle \sum_{l,j} \sum_{h} \mu^{(j)}(h)f_{\bar{h}}^{(l)}\cdot \underbrace{u_{h^{-1}}^* \alpha_{h^{-1}}(a)w(h^{-1},g) u_{h^{-1}g}}_{=au_g}$ \\\\

$=$ & \hspace{-30mm}$\displaystyle \left(\sum_{l,j} \sum_{h} \mu^{(j)}(h)f_{\bar{h}}^{(l)}\right)\cdot a u_g$ \\\\

$\hspace{-1.8mm}\stackrel{\mathrm{\eqref{eq:E12}}}{=}_{(s+1)(d+1)\delta}$ & \hspace{-15mm}$au_g$.
\end{longtable}
\noindent
Summing up these approximation steps and using \eqref{eq:D1} in the form of the inequality $|B_n^{(j)}|\leq [G:G_n]$, it follows for all $au_g\in F$ that
\[
au_g =_{3(s+1)(d+1)(r+1)\delta} \Phi\circ\Theta(au_g).
\]
Now let us summarize what we have got. We have constructed a c.p.~approximation 
\[ 
\left( \bigoplus_{j=0}^s M_{|B_n^{(j)}|}(\CF),~ \Theta,~ \Phi \right)
\]
of tolerance $3(s+1)(d+1)(r+1)\delta$ on $F$, where the map
\[
\Phi = \sum_{i=0}^r\sum_{l=0}^d \sum_{j=0}^s \sigma_{j,l}\circ\phi_{j,i} : \bigoplus_{j=0}^s M_{|B_n^{(j)}|}(\CF) \to (A\rtimes_{\alpha,w} G)_\infty
\]
is a sum of $(s+1)(d+1)(r+1)$ c.p.c.~maps of order zero.
Since $d,s,r$ are constants and $F\fin A\rtimes_{\alpha,w} G$ and $\delta>0$ were arbitrary, it follows from~\ref{dimnuc sequence} that
\[
\dimnuceins(A\rtimes_{\alpha,w} G)\leq (s+1)(d+1)(r+1),
\]
which is what we wanted to show.
\end{proof}

The following is a nice test case for the above theorem:

\rem Let $G$ be a countably infinite, residually finite and amenable group. Let $\sigma=(G_n)_n$ be a regular approximation of $G$ consisting of normal subgroups. Then the generalized Bunce-Deddens algebra associated to $G$ and $\sigma$ (see~\cite{Orfanos10}) has nuclear dimension at most $\asdim(\square_\sigma G)$.

This is because the associated generalized Bunce-Deddens algebra arises as the crossed product of the dynamical system
\[
(\CC(X),\alpha) = \lim_{\longrightarrow}~ (\CC(G/G_n), G\text{-shift}),
\]
where the connecting maps are induced by the natural maps $G/G_n\onto G/G_{n+1}$ and $X$ turns out to be homeomorphic to the Cantor set. From the definition, it is trivial that the resulting action $\alpha$ has Rokhlin dimension $0$ along $\sigma$. Hence it follows from~\ref{Hauptsatz} that
\[
\dimnuc(\CC(X)\rtimes_\alpha G) \leq \asdimeins(\square_\sigma G)\cdot 1 \cdot 1-1 = \asdim(\square_\sigma G).
\]
In particular, this is independent of the other results within~\cite{Orfanos10}. On the other hand, it is easy to see that generalized Bunce-Deddens algebras are separable, unital, nuclear, quasidiagonal, have a unique tracial state and satisfy the UCT.
A combination of these facts with~\cite[6.2]{MatuiSato14UHF} gives an alternative proof that Bunce-Deddens algebras, associated to regular approximations yielding a box space of finite asymptotic dimension, are classifiable. 

However, we should mention that it is known by now that \emph{all} generalized Bunce-Deddens algebras are classifiable. It follows from the detailed study of the transformation groupoid, which was carried out in~\cite{Orfanos10}, that all generalized Bunce-Deddens algebras have strict comparison of positive elements. This, in turn, is sufficient to deduce from~\cite[6.2]{MatuiSato14UHF} that they are classifiable.


\section{Dependence on the approximation sequence}\label{sec:dimrok-amdim}

In this section, we discuss the dependence of the Rokhlin dimension on the approximation sequence. In order to do so, we introduce a related invariant $\amdim(\alpha)$ which does not depend on any residual finite approximation. Later in Section~\ref{sec:dimBLR}, we shall see that for topological actions, this invariant corresponds to the notion of amenability dimension that appeared in the work of Guentner, Willett and Yu~\cite{GuentnerWillettYu15}. 

\begin{defi} \label{amdim}
Let $A$ be a \cstar-algebra, $G$ a countable discrete group and $(\alpha,w): G\curvearrowright A$ a cocycle action. We define $\amdim(\alpha)$ to be the smallest natural number $d$ with the following property: 

For any $\eps>0$ and any finite sets $M\fin G$ and $F\fin A$, there exist finitely supported maps $\mu^{(l)}: G\to A_{1,+},~ g \mapsto \mu_g^{(l)}$ for $l=0,\dots, d$ satisfying:

{\renewcommand{\theenumi}{\alph{enumi}}
\begin{enumerate}
 \item $\displaystyle \Big( \sum_{l=0}^d \sum_{g\in G} \mu^{(l)}_g \Big)\cdot a =_\eps a$ for any $a\in F$;
 \item $\mu_g^{(l)} \mu_{h}^{(l)} = 0$ for all $l=0,\dots,d$ and ${g}\neq {h}$ in $G$;
 \item $\dst \Big\| \sum_{h \in E} \big( \alpha_g(\mu_{h}^{(l)}) -  \mu_{gh}^{(l)} \big) \cdot a \Big\| \leq \eps$ for all $a\in F,~l=0,\dots,d$, $g\in M$ and every finite subset $E \fin G$;
 \item $\dst \Big\| \Big[ \sum_{g \in E} \mu_g^{(l)},a \Big] \Big\| \leq\eps$ for all $a\in F,~l=0,\dots,d$ and every finite subset $E \fin G$.
\end{enumerate}
}
If no such $d$ exists, then we set $\amdim(\alpha) = \infty$.
\end{defi}

Note that the sum in the first condition is a finite sum because each function $\mu^{(l)}$ is finitely supported. Meanwhile, the third condition expresses a kind of almost equivariance for each $\mu^{(l)}$, and in particular has the consequence that the function $\mu^{(l)}$ tapers off near the edge of its support. As in~\ref{dimrok allgemein}, we drop the cocycle $w$ in the notation because the definition of $\amdim$ only depends on $\alpha$.

Note also that the definition of $\amdim(\alpha)$ does not require residual finiteness of the group. If the group is residually finite, this notion allows us to compare the Rokhlin dimension associated to various regular approximation sequences, via the following intertwining inequalities. 

\begin{theorem} \label{dimrok amdim}
Let $A$ be a \cstar-algebra, $G$ a residually finite group and $(\alpha,w): G\curvearrowright A$ a cocycle action. Let $\sigma=(G_n)_n\in\Lambda(G)$ be a regular approximation of $G$. Then one has the following estimates:
\[
\dimrokeins({\alpha}) \leq \amdimeins(\alpha) \leq \asdimeins(\square_\sigma G) \cdot \dimrokeins({\alpha}, \sigma).
\]
In particular, if the right-hand side is finite for some regular approximation $\sigma$, then it follows that $\dimrok(\alpha) < \infty$.
\end{theorem}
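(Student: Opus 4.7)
The plan is to prove the two inequalities separately, each by matching the approximate characterizations of the respective dimensional invariants.

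\textbf{First inequality} $\dimrokeins(\alpha) \leq \amdimeins(\alpha)$. Assume $d := \amdim(\alpha) < \infty$; I fix an arbitrary finite-index subgroup $H \leq G$ and verify $\dimrok(\alpha, H) \leq d$ via the approximate characterization \ref{quantifier dimrok}(3). Given $\eps > 0$, $M \fin G$, $F \fin A$, apply \ref{amdim} with the same parameters to obtain finitely supported $\mu^{(l)} : G \to A_{1,+}$ for $l = 0,\dots,d$, and set
\[
 f_{\bar g}^{(l)} := \sum_{h \in \bar g} \mu_h^{(l)} \in A, \qquad \bar g \in G/H, \; l = 0,\dots,d.
\]
The sum is finite, and by the (approximate) orthogonality in \ref{amdim}(b) each $f_{\bar g}^{(l)}$ is a positive contraction. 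The conditions (3a)–(3d) of \ref{quantifier dimrok} then follow by direct bookkeeping from (a)–(d) of \ref{amdim}: one uses the disjoint partition $G = \bigsqcup_{\bar g} \bar g$ for (3a) and (3b), applies \ref{amdim}(c) to the finite subset $E := \bar h \cap (\supp\mu^{(l)} \cup g^{-1}\supp\mu^{(l)})$ for (3c), and reads off (3d) from (d). As $H$ was arbitrary, this gives $\dimrok(\alpha) \leq d$.

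\textbf{Second inequality} $\amdimeins(\alpha) \leq \asdimeins(\square_\sigma G) \cdot \dimrokeins(\alpha, \sigma)$. Let $s := \asdim(\square_\sigma G)$, $d := \dimrok(\alpha, \sigma)$ (both finite). Given $\eps > 0$, $M \fin G$, $F \fin A$, I proceed in two stages: first pick a tolerance $\eps_1 > 0$ and invoke \ref{decay functions}(3) with parameters $(\eps_1, M \cup M^{-1})$ to obtain $n \in \IN$ and scalar decay functions $\mu^{(j)} : G \to [0,1]$ for $j = 0,\dots,s$; let $S := \bigcup_j \supp \mu^{(j)}$, which is finite. Then pick $\eps_2 > 0$ small enough that $|S|\eps_2 \leq \eps/2$, and apply \ref{quantifier dimrok}(3) to $\dimrok(\alpha, G_n) \leq d$ with tolerance $\eps_2$ (and a sufficiently enlarged finite set) to obtain Rokhlin-tower elements $f_{\bar g}^{(l)} \in A_{1,+}$, $\bar g \in G/G_n$, $l = 0,\dots,d$. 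Define the $(s+1)(d+1)$ positive-contraction-valued maps
\[
\nu_g^{(j,l)} := \mu^{(j)}(g) \cdot f_{\bar g}^{(l)}, \qquad g \in G,\; 0 \leq j \leq s,\; 0 \leq l \leq d.
\]
Conditions (a), (b), (d) of \ref{amdim} are then routine: (a) uses the periodic partition of unity $\sum_j \sum_{h \in \bar g} \mu^{(j)}(h) = 1$ from \ref{decay functions}(3)(b) combined with (3a); (b) splits into a same-coset case (exact orthogonality of the scalar factors via \ref{decay functions}(3)(a)) and a distinct-cosets case (approximate orthogonality of the $f$'s via (3b)); (d) is immediate from (3d).

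The \textbf{delicate step} is condition (c). For $g \in M$ and $E \fin G$, I decompose
\[
 \sum_{h \in E}\bigl(\alpha_g(\nu_h^{(j,l)}) - \nu_{gh}^{(j,l)}\bigr) a = \sum_{h \in E} \mu^{(j)}(h)\bigl(\alpha_g(f_{\bar h}^{(l)}) - f_{\overline{gh}}^{(l)}\bigr) a + \sum_{h \in E}\bigl(\mu^{(j)}(h) - \mu^{(j)}(gh)\bigr) f_{\overline{gh}}^{(l)} a.
\]
The first sum has at most $|S|$ nonzero summands, each of norm $\leq \eps_2$ by (3c), giving a total of $\leq |S|\eps_2$. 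For the second, regroup by coset $\bar k = \overline{gh}$ to obtain $\sum_{\bar k} c_{\bar k}\, f_{\bar k}^{(l)} a$ with $c_{\bar k} := \sum_{h \in E \cap g^{-1}\bar k}\bigl(\mu^{(j)}(h) - \mu^{(j)}(gh)\bigr)$. Within each coset $g^{-1}\bar k$, both $\supp \mu^{(j)}$ and $g^{-1}\supp\mu^{(j)}$ contribute at most one point (by \ref{decay functions}(3)(a)); if those points coincide then $|c_{\bar k}| \leq \eps_1$ directly via \ref{decay functions}(3)(c), while if they differ, a short argument combining \ref{decay functions}(3)(a) and (c) shows that both $\mu^{(j)}$-values involved are themselves at most $\eps_1$, so again $|c_{\bar k}| \leq 2\eps_1$. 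Approximate pairwise orthogonality of the $f_{\bar k}^{(l)}$'s then bounds the second sum by $2\eps_1 + O(\eps_2)$. The \textbf{main obstacle} is precisely this parameter tracking in (c): the key structural point is that those ``bad'' cosets where the two translates of $\supp\mu^{(j)}$ meet at distinct points automatically carry $\mu^{(j)}$-mass of order $\eps_1$ by the almost-invariance, which prevents norm blow-up and is exactly where the box-space geometry from \ref{decay functions} interlocks with the Rokhlin-tower structure from \ref{quantifier dimrok}.
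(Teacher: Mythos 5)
Your proposal is correct and follows the same overall strategy as the paper: the first inequality is proved identically (summing the $\mu^{(l)}$ over cosets of an arbitrary finite-index $H$ and checking \ref{quantifier dimrok}(3), with \ref{amdim}(c) applied to the finite set of relevant indices), and for the second inequality you use the same product construction $\mu^{(l,j)}_g=\nu^{(j)}(g)\cdot f^{(l)}_{\bar g}$ combining decay functions from \ref{decay functions}(3) with Rokhlin towers from \ref{quantifier dimrok}(3). Where you genuinely diverge is in verifying the almost-equivariance condition \ref{amdim}(c): the paper bounds the ``scalar drift'' term by a term-by-term triangle inequality over the at most $2N$ points of $\supp(\nu^{(j)})\cup g^{-1}\supp(\nu^{(j)})$ (where $N=[G:G_n]$), whereas you regroup by cosets $\overline{gh}$ and exploit the fact that each coset meets each of the two translated supports in at most one point, so that each coefficient $c_{\bar k}$ is at most $2\eps_1$ \emph{independently of $N$}. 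This is a real improvement: as displayed, the paper's estimate for that term reads $2N\cdot\frac{\eps}{N}$, but since the decay functions are only chosen with tolerance $\eps$ (and $N$ is determined only after $n$ is fixed), the naive bound is actually $2N\eps$, which is not uniform in $\eps$; your regrouping is exactly what justifies the $N$-independent bound the paper needs. Two small points of care on your side: the paper additionally uses projectivity of the cone over $\IC^N$ to upgrade the towers to \emph{exactly} orthogonal elements, which makes the norm of $\sum_{\bar k}c_{\bar k}f^{(l)}_{\bar k}$ equal to $\max_{\bar k}|c_{\bar k}|$ on the nose; working only with approximate orthogonality as you do, the quadratic expansion of $\|\sum_{\bar k}c_{\bar k}f^{(l)}_{\bar k}a\|^2$ produces cross terms and a diagonal term whose control requires $\eps_2$ to be small relative to $|S|^2$ (not just $|S|$) and uses condition (3a) to bound $\|a^*(\sum_{\bar k}f^{(l)}_{\bar k})a\|$ --- both are harmless since $\eps_2$ is chosen after $n$, but should be spelled out (or sidestepped by invoking the projectivity trick as the paper does).
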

\begin{proof}
Denote $d=\amdim(\alpha), s=\asdim(\square_\sigma G)$ and $r=\dimrok(\alpha, \sigma)$. Let us prove the first inequality, assuming that $d<\infty$. Let $M\fin G$, $F\fin A$ and $\eps>0$ be given. By Definition~\ref{amdim}, we can find finitely supported maps $\mu^{(l)}: G\to A_{1,+}$ for $l=0,\dots,d$ with the given properties depending on $(M,F,\eps)$. Let $H\subset G$ be a subgroup of finite index. For $l=0,\dots,d$ and $\bar{g}\in G/H$, we define $f_{\bar{g}}^{(l)} = \sum_{h\in\bar{g}} \mu_h^{(l)}$, where the sum is in fact finite. Then one easily verifies that~\ref{quantifier dimrok}(3) is satisfied for the parameters $(M,F,\eps)$: for any $a\in F$, we have
\begin{align}
 \Big( \sum_{l=0}^d \sum_{\bar{g}\in G/H} f_{\bar{g}}^{(l)} \Big) \cdot a =_\eps a  \; ; & \hspace{2in} 
\end{align}
and for all $l=0,\dots,d$, we have
\begin{align}
 & f_{\bar{g}}^{(l)}f_{\bar{h}}^{(l)} = 0 && \text{for~all~}  \bar{g}\neq\bar{h} ~\mathrm{in}~ G/H; \\
 & \big\| \big( \alpha_g(f_{\bar{h}}^{(l)}) - f_{ {}_{\quer{gh}} }^{(l)} \big) \cdot a \big\| \leq \eps  && \text{for~all~} \bar{h}\in G/H ~\text{and~} g\in M;\\
 & \| [ f_{\bar{g}}^{(l)},a ] \| \leq\eps  && \text{for~all~}  \bar{g}\in G/H.
\end{align}
As $H, M$ and $\eps$ were arbitary, we can deduce $\dimrok(\alpha)\leq d$ by~\ref{quantifier dimrok}.

Let us prove the second inequality, assuming that $s,r<\infty$. Let $M\fin G$ and $\eps>0$ be given. By~\ref{decay functions}(3), there is some $n$ such that there exist finitely supported functions $\nu^{(j)}: G\to [0,1]$, for $j=0,\dots,d$, that satisfy the three conditions in~\ref{decay functions}(3) in place of $\mu^{(j)}$, with regard to the pair $(M,\eps)$. Moreover, given a finite subset $F \subset A_1$ and setting $N = [G : G_n]$, we apply~\ref{quantifier dimrok}(3) and choose positive contractions $f^{(l)}_{\bar{g}}$ in $A_{1,+}$ for $l=0,\dots,r$ and $\bar{g}\in G/G_n$ such that for every $a\in F$, we have
\begin{align}
 & \label{eq:almost-amdim-1} \sum_{l=0}^d\sum_{\bar{g}\in G/G_n} f_{\bar{g}}^{(l)} \cdot a =_\eps a \; ; & \hspace{2in} 
\end{align}
and for all $l=0,\dots,d$, we have
\begin{align}
 & \label{eq:almost-amdim-2} \|f_{\bar{g}}^{(l)}f_{\bar{h}}^{(l)}\cdot a \|\leq \frac{\eps}{N}  && \text{for~all~} \bar{g}\neq\bar{h} \text{~in~} G/G_n ; \\
 & \label{eq:almost-amdim-3} \bar{\alpha}_g(f_{\bar{h}}^{(l)}) \cdot a =_{\frac{\eps}{N}} f_{ {}_{\quer{gh}} }^{(l)} \cdot a  && \text{for~all~} \bar{h}\in G/G_n ; \\
 & \label{eq:almost-amdim-4} \| [ f_{\bar{g}}^{(l)},a ] \|\leq \frac{\eps}{N}  && \text{for~all~}   \bar{g}\in G/H .
\end{align}
Now the cone over $\IC^N$ is projective by~\cite[10.1.7]{Loring}. Since in the relations \eqref{eq:almost-amdim-1} - \eqref{eq:almost-amdim-4}, $\eps$ may be arbitrarily small, we can use a projectivity argument and strengthen \eqref{eq:almost-amdim-2} to
\begin{equation}\tag{\ref{eq:almost-amdim-2}'}
 f_{\bar{g}}^{(l)}f_{\bar{h}}^{(l)}=0\quad\text{for all}~l=0,\dots,d~\text{and}~\bar{g}\neq\bar{h}~\text{in}~G/G_n.\footnote{Fix the index $l$. For a shrinking sequence of $\eps$, the elements $f^{(l)}_{\bar{g}}$ give rise to a $*$-homomorphism from $\CC_0(0,1]^{\oplus N}$ to the sequence algebra $A_\infty$. Due to projectivity, this lifts to a sequence of $*$-homomorphisms to $A$, meaning that we may find honestly orthogonal elements close to the original ones.}
\end{equation}
Now consider the functions $\mu^{(l,j)}$ for $l=0,\dots,r$ and $j=0,\dots,s$ such that
\[
\mu^{(l,j)}_g = \nu^{(j)}_g\cdot f_{\bar{g}}^{(l)} \quad\text{for\ all}~g \in G.
\]
Notice that the support of each $\mu^{(l,j)}$ is contained in the support of $\nu^{(j)}$, which consists of no more than $N$ elements. Thus one directly computes that the conditions of~\ref{amdim} are satisfied for $M$, $F$ and $3 \eps$. For example, to check (3), we compute
\begin{align*}
 & \ \Big\| \sum_{h \in E} \big( \alpha_g(\mu_{h}^{(l,j)}) -  \mu_{gh}^{(l,j)} \big) \cdot a \Big\| \\
 = & \ \Big\| \sum_{h \in E} \Big( \alpha_g( \nu^{(j)}_h\cdot f_{\bar{h}}^{(l)}) - \alpha_g ( \nu^{(j)}_h) \cdot f_{\bar{gh}}^{(l)} + \alpha_g( \nu^{(j)}_h) \cdot f_{\bar{gh}}^{(l)} - \nu^{(j)}_{gh}\cdot f_{\bar{gh}}^{(l)} \Big) \cdot a \Big\| \\
 \leq & \ \sum_{h \in E \cap \supp(\nu^{(j)}) } \| \alpha_g ( \nu^{(j)}_h ) \| \big\| \big( \alpha_g(  f_{\bar{h}}^{(l)} ) - f_{\bar{gh}}^{(l)}  \big) \cdot a \big\| \\ 
 & + \sum_{h \in E \cap \big( \supp(\nu^{(j)})\cup g \cdot \supp(\nu^{(j)}) \big) } \|  \alpha_g ( \nu^{(j)}_h )  - \nu^{(j)}_{gh} \| \| f_{\bar{gh}}^{(l)} \cdot a \| \\
 \leq & \  N \cdot \frac{\eps}{N} + 2N \cdot \frac{\eps}{N} = 3 \eps \; .
\end{align*}
The other conditions are checked similarly, and in fact these condition hold with respect to the tolerance $\eps$ instead of $3 \eps$. Since $M$, $F$ and $\eps$ were arbitrary, we get $d+1 \leq (s+1)(r+1)$.
\end{proof}

This shows that the mere finiteness of the Rokhlin dimension is, in a sense, independent of the regular approximation. In particular, for a system $(A,\alpha,w)$, whenever Theorem~\ref{Hauptsatz} applies nontrivially (i.e., $G$ admits a regular approximation $\sigma$ such that the right-hand side of the first inequality, $\asdimeins(\square_\sigma G)\cdot \dimrokeins(\alpha, \sigma)\cdot\dimnuceins(A)$, is finite), all versions of Rokhlin dimension are automatically finite.


\section{Free topological actions of nilpotent groups}\label{sec:nilpotent-dimnuc}

\noindent
In this section, we study free actions of finitely generated, nilpotent groups on finite dimensional spaces, and show that their Rokhlin dimension is finite. First, we have to establish an important technical property of nilpotent groups: 

\begin{lemma} \label{nilpotent growth}
Let $G$ be a finitely generated, nilpotent group. Let $\ell$ be the Hirsch-length of $G$ and set $m=3^\ell$. Then for any finite subset $M\fin G$, there is a finite subset $F\fin G$ containing the unit and $g_1,\dots, g_m \in G$, such that for any $g\in F^{-1}F$, we have $Mg\subset g_jF $ for some $j\in\set{1,\dots,m}$.
\end{lemma}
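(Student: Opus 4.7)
Proceed by induction on the Hirsch length $\ell$ of $G$.

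For the base case $\ell = 0$, a finitely generated nilpotent group with vanishing Hirsch length is necessarily finite. Setting $F = G$, $g_1 = 1_G$ and $m = 1 = 3^0$ then trivially works, since $Mg \subset G = g_1 F$ for every $g \in F^{-1} F$.

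For the inductive step, assume $\ell \geq 1$. By Remark~\ref{group properties}(4), the center $Z(G)$ of the infinite, finitely generated, nilpotent group $G$ is infinite, so as a finitely generated abelian group it contains an element $z$ of infinite order. Set $Z = \langle z \rangle$, which is a central subgroup of $G$ isomorphic to $\IZ$, and let $\pi\colon G \to G/Z$ denote the quotient. Then $G/Z$ is finitely generated nilpotent of Hirsch length $\ell - 1$. The induction hypothesis applied to $\pi(M) \fin G/Z$ produces a finite $\quer F_0 \fin G/Z$ containing the unit and $\quer h_1, \dots, \quer h_{3^{\ell-1}} \in G/Z$ such that for every $\quer g \in \quer F_0^{-1} \quer F_0$ there exists $i$ with $\pi(M) \cdot \quer g \subset \quer h_i \quer F_0$. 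Lift these data: pick $F_0 \fin G$ with $1_G \in F_0$ and $\pi(F_0) = \quer F_0$, and $h_i \in \pi^{-1}(\quer h_i)$.

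Since $M$ and $F_0^{-1} F_0$ are both finite, there is an integer $K_0 \geq 0$ such that for every $m \in M$ and $x \in F_0^{-1} F_0$ one can write $mx = h_{i(x)} f z^{k}$ with $f \in F_0$ and $|k| \leq K_0$, where the index $i(x)$ depends only on $\pi(x)$. Now choose $N \in \IN$ with $N \geq 6 K_0 + 1$ and define $F = F_0 \cdot \set{z^p \mid 0 \leq p \leq N-1}$. Since $z$ is central in $G$,
\[
F^{-1} F = F_0^{-1} F_0 \cdot \set{z^j \mid -(N-1) \leq j \leq N-1},
\]
so every $g \in F^{-1} F$ has the form $g = x z^j$ with $x \in F_0^{-1} F_0$ and $|j| \leq N-1$. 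Consequently
\[
Mg \subset h_{i(x)} F_0 \cdot \set{z^{k+j} \mid |k| \leq K_0}.
\]
It remains to select, for each such $j$, an integer $j'$ from a fixed three-element list such that $[j - K_0, j + K_0] \subset [j', j' + N - 1]$, which gives the inclusion $Mg \subset (h_{i(x)} z^{j'}) F$. The choice $N \geq 6 K_0 + 1$ guarantees $3(N - 1 - 2 K_0) \geq 2(N-1)$, so suitable $j_1', j_2', j_3'$ exist. Then $\set{g_{(i,a)} = h_i z^{j_a'} \mid 1 \leq i \leq 3^{\ell-1},\; 1 \leq a \leq 3}$ yields $3^\ell$ elements that together with $F$ satisfy the required property.

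The main subtleties will lie in the inductive step: verifying that an infinite, finitely generated, nilpotent group admits a central element of infinite order (using Remark~\ref{group properties}(4) together with structure of finitely generated abelian groups), and carefully exploiting centrality of $z$ both to obtain the factored form of $F^{-1} F$ displayed above and to ensure that the lift $i(x)$ from the inductive hypothesis is independent of the ``$z$-coordinate'' of $g$, so that the remaining one-dimensional covering step proceeds uniformly.
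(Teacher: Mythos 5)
Your proof is correct and follows essentially the same strategy as the paper's: induction on the Hirsch length, quotienting by an infinite cyclic central subgroup, lifting the covering data from the quotient, and absorbing the one-dimensional $\IZ$-direction with three shifted translates (which is exactly where the factor $3$ in $3^\ell$ comes from in both arguments). The only difference is cosmetic — the paper tracks the cross-section's failure to be a homomorphism via explicit finite error sets $S$ and $T$, whereas you absorb all of this into a single bound $K_0$ by finiteness, which is a slightly cleaner bookkeeping of the same idea.
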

\begin{proof}
Before we delve into the somewhat technical proof, let us summarize the approach. It is easy to see that this is true for $G=\IZ$. If we pretend for a moment that $G$ is torsion-free and nilpotent, then the idea boils down to representing $G$ as an iterated central extension by copies of $\IZ$, and inductively define $F$ in such a way that, in the direction of a newly-added generator, each \emph{new} $F$ is long enough that commutators of the old generators have relatively minute impact. This process describes an induction argument by the Hirsch-length of $G$.

So let us apply induction by $\ell$. If $\ell=0$, then $G$ is finite, and we may simply take $F=G$ and $m=1$.

Now suppose $G$ has Hirsch-length $\ell+1$ for some $\ell\geq 0$, and the statement has been proved for all nilpotent groups with Hirsch length at most $\ell$. By~\ref{group properties}, $G$ has a central element $t\in G$ of infinite order. Set $H = G / \langle t \rangle$. This yields a finitely generated, nilpotent group of Hirsch-length $\ell$. Denote by $\pi: G\to H$ the quotient map and set $m=3^\ell$.

Now let $M\fin G$ be a finite subset. We apply our induction hypothesis on $H$ to get a finite set $F_0\fin H$ containing the identity and $h_1,\dots, h_m\in H$ such that for each $h\in F_0^{-1}F_0$, we have $\pi(M)h\subset h_jF_0$ for some $j\in\set{1,\dots,m}$. 

Pick some cross section $\sigma: H\to G$ such that $\sigma(1_H)=1_G$. 
Decompose $M$ into 
\begin{equation}  \label{eq:F6}
M = \bigsqcup_{k\in\pi(M)} M_k\cdot\sigma(k)
\end{equation} 
for some finite subsets $M_k \fin \langle t\rangle$. Define the finite sets 
\begin{equation}  \label{eq:F7}
T = \bigcup_{k \in \pi(M)} \bigcup_{k_1,k_2\in F_0} \sigma(kk_1^{-1}k_2)^{-1}\sigma(k) \sigma(k_1)^{-1} \sigma(k_2) M_k
\end{equation}
and
\begin{equation}  \label{eq:F8}
S= \bigcup_{j=1}^m \bigcup_{k\in\pi(M)} \bigcup_{k_1,k_2\in F_0} \sigma(h_{j}^{-1}kk_1^{-1}k_2)^{-1}\sigma(h_{j})^{-1}\sigma(kk_1^{-1}k_2).
\end{equation}
Since $\sigma$ is a cross section, we have $S, T\subset\langle t\rangle$. Moreover, $ST$ is finite, so there exists $n\in\IN$ such that 

\begin{equation}  \label{eq:F9}
ST\subset\set{t^i ~|~ -n\leq i\leq n}.
\end{equation} 
We set 
\begin{equation}  \label{eq:F10}
F_1 = \set{t^i ~|~ -3n\leq i\leq 3n},\quad F=\sigma(F_0)\cdot F_1
\end{equation} 
and 
\begin{equation}  \label{eq:F11}
g_{i,j}=t^{4ni}\sigma(h_j)
\end{equation} 
for $i=-1,0,1$ and $j=1,\dots,m$. We will show that the set $F$ and the elements $g_{i,j}$ satisfy the desired property of the assertion.

Let us choose an element $x\in F^{-1}F$. By \eqref{eq:F10}, this element has the form 
\begin{equation}  \label{eq:F12}
x=t^{l_1-l_2}\sigma(k_1)^{-1}\sigma(k_2)
\end{equation} 
for certain $k_1,k_2\in F_0$ and $-3n\leq l_1,l_2\leq 3n$. By assumption, we have 
\begin{equation}  \label{eq:F13}
\pi(M)k_1^{-1}k_2\subset h_{j_0} F_0
\end{equation} 
for some $j_0\in\set{1,\dots,m}$.
For the number 
\[
i_0=\begin{cases} -1 &,\quad\text{if}~~ -6n \leq l_2-l_1 < -2n \\
0 &,\quad\text{if}~~ -2n\leq l_2-l_1\leq 2n \\
1 &,\quad\text{if}~~ 2n< l_2-l_1\leq 6n
\end{cases}
\]
we have
\begin{equation}  \label{eq:F14}
l_2-l_1+\set{-n,\dots,n} \subset 4ni_0 + \set{-3n,\dots,3n}.
\end{equation}
Combining all this, we observe that
\begin{longtable}{cl}
\multicolumn{2}{l}{\hspace{7mm} $Mx$} \\\\ 
$\stackrel{\eqref{eq:F6}, \eqref{eq:F12}}{=}$ & \hspace{-5mm}$\dst \bigsqcup_{k\in\pi(M)} M_k\cdot\sigma(k) t^{l_1-l_2}\sigma(k_1)^{-1}\sigma(k_2)$ \\\\
$=$ & \hspace{-5mm}$\dst \bigsqcup_{k\in\pi(M)} \sigma(k)\sigma(k_1)^{-1}\sigma(k_2) M_k t^{l_1-l_2}$ \\\\
$=$ & \hspace{-5mm}$\dst \bigsqcup_{k\in\pi(M)} \sigma(kk_1^{-1}k_2)\sigma(kk_1^{-1}k_2)^{-1}\sigma(k)\sigma(k_1)^{-1}\sigma(k_2) M_k t^{l_1-l_2}$ \\\\
$\stackrel{\eqref{eq:F7}}{\subset}$ & \hspace{-5mm}$\dst \bigsqcup_{k\in\pi(M)} \sigma(kk_1^{-1}k_2) T t^{l_1-l_2}$ \\\\
$=$ & \hspace{-5mm}$\dst \bigsqcup_{k\in\pi(M)} \sigma(h_{j_0})\sigma(h_{j_0}^{-1}kk_1^{-1}k_2)\sigma(h_{j_0}^{-1}kk_1^{-1}k_2)^{-1}\sigma(h_{j_0})^{-1}\sigma(kk_1^{-1}k_2) T t^{l_1-l_2}$ \\\\
$\stackrel{\eqref{eq:F8}}{\subset}$ & \hspace{-5mm}$\dst \bigsqcup_{k\in\pi(M)} \sigma(h_{j_0})\sigma(h_{j_0}^{-1}kk_1^{-1}k_2) ST t^{l_1-l_2}$ \\\\
$\stackrel{\eqref{eq:F13}}{\subset}$ & $\sigma(h_{j_0}) \sigma(F_0) ST t^{l_1-l_2}$ \\\\
$\stackrel{\eqref{eq:F9}, \eqref{eq:F14}, \eqref{eq:F10}}{\subset}$ & $\sigma(h_{j_0})\sigma(F_0)F_1t^{4ni_0}$ \\\\
$\stackrel{\eqref{eq:F10}, \eqref{eq:F11}}{=}$ & $g_{i_0,j_0}\cdot F$.
\end{longtable}
\noindent
By the definition of the elements $g_{i,j}$ in \eqref{eq:F11}, we see that there are $3m=3^{\ell+1}$ lower indices.
This finishes the induction step and the proof.
\end{proof}

Recall the following marker property Lemma:

\begin{lemma}[see {\cite[3.8, 4.4]{Szabo14}}] \label{marker property}
Let $G$ be a countable, discrete group and $X$ a compact metric space of finite covering dimension $d\in\IN$. Let $\alpha: G\curvearrowright X$ be a free action. Let $F\fin G$ be a finite set, and let $g_1,\dots,g_d\in G$ be elements such that the sets
\[
F^{-1}F,~ g_1F^{-1}F,~ \dots,~ g_dF^{-1}F ~\fin~ G
\]
are pairwise disjoint. Set $g_0=1_G$. Then there exists an open set $Z\subset X$ such that
\[
X = \bigcup_{l=0}^d \bigcup_{g\in F^{-1}F} \alpha_{g_lg}(Z)
\]
and
\[
\alpha_g(\quer{Z})\cap\alpha_h(\quer{Z})=\emptyset\quad\text{for all}~g\neq h~\text{in}~F.
\]
\qed
\end{lemma}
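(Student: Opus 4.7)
The plan is to combine Ostrand's theorem on covering dimension with a uniform freeness estimate obtained from compactness of $X$.

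As a first step, freeness of $\alpha$ and compactness of $X$ yield a constant $\eta > 0$ such that $d(x,\alpha_r(x)) > 3\eta$ for every $x \in X$ and every $r$ in a suitable finite set $R \subset G \setminus \{1_G\}$. I would take $R$ to contain every element of the form $g_l\, g^{-1} h\, g_{l'}^{-1}$ with $l, l' \in \{0,\ldots,d\}$ and $g \neq h$ in $F$ (with the convention $g_0 = 1_G$). A direct computation using the pairwise disjointness of $F^{-1}F, g_1 F^{-1}F, \ldots, g_d F^{-1}F$ shows that each such element is non-trivial, so this prescription for $R$ is consistent with the freeness estimate.

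Next, I would cover $X$ by open balls of radius at most $\eta/2$, extract a finite subcover by compactness, and apply Ostrand's theorem (valid because $\dim X \leq d$) to obtain an open refinement $\mathcal{V} = \mathcal{V}_0 \sqcup \cdots \sqcup \mathcal{V}_d$ in which each $\mathcal{V}_l$ is a pairwise disjoint family of open sets. The shrinking lemma for normal spaces then produces, for every $V \in \mathcal{V}$, an open set $\tilde V$ with $\overline{\tilde V} \subset V$, such that $\{\tilde V\}_{V \in \mathcal{V}}$ still covers $X$. I would then define
\[
Z \;=\; \bigcup_{l=0}^{d} \alpha_{g_l^{-1}}\Big(\,\bigsqcup_{V \in \mathcal{V}_l} \tilde V\,\Big).
\]
The covering condition is immediate from this definition: $\alpha_{g_l}(Z) \supset \bigsqcup_{V \in \mathcal{V}_l} \tilde V$, and the union over $l$ already covers $X$, taking $g = 1_G$ in the index.

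The main obstacle is verifying the disjointness $\alpha_g(\overline{Z}) \cap \alpha_h(\overline{Z}) = \emptyset$ for $g \neq h$ in $F$. Expanding the definition of $Z$, this reduces to ruling out intersections of the form $\overline{\tilde V} \cap \alpha_r(\overline{\tilde W})$ for each $V \in \mathcal{V}_l$, $W \in \mathcal{V}_{l'}$ and the corresponding non-trivial element $r = g_l\, g^{-1} h\, g_{l'}^{-1} \in R$. When $V = W$ (which forces $l = l'$) the emptyness is direct: both $\overline{\tilde V}$ and $\alpha_r(\overline{\tilde V})$ are contained in a single ball of diameter $\leq \eta$, whose centers are separated by more than $3\eta$ because of the freeness estimate. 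The delicate case is $V \neq W$, where a priori the translate $\alpha_r(\overline{\tilde W})$ may wander into $\overline{\tilde V}$; handling it requires a more refined coordination of the initial open cover with the scale $\eta$, exploiting the pairwise disjointness within each $\mathcal{V}_l$ and carrying out combinatorial bookkeeping on the finite set $R$. This coordination is the technical heart of the argument, and is essentially the content carried out in \cite[3.8, 4.4]{Szabo14}.
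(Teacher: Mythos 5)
Your setup is fine as far as it goes: the uniform separation constant $\eta$ obtained from freeness and compactness, the verification that each $r = g_l g^{-1}hg_{l'}^{-1}$ is nontrivial (this is exactly where the pairwise disjointness of $F^{-1}F, g_1F^{-1}F,\dots,g_dF^{-1}F$ enters), and the reduction of the required disjointness to statements of the form $\overline{\tilde V}\cap\alpha_r(\overline{\tilde W})=\emptyset$ are all correct, as is the diagonal case $V=W$. But the off-diagonal case is not a deferrable technical verification; it is the entire content of the lemma, and the set $Z$ you define does not satisfy it. Concretely, for fixed $l,l'$ and the relevant $r$, the sets $\alpha_r(\tilde W)$ with $W$ ranging over $\mathcal{V}_{l'}$ form a translate of a family whose union (together with the other colours) covers $X$, so they will in general meet many of the sets $\tilde V$ with $V\in\mathcal{V}_l$. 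No ``coordination of the initial cover with the scale $\eta$'' can prevent this: the freeness estimate only controls $d(x,\alpha_r(x))$ and says nothing about whether $\alpha_r$ carries one small piece into a \emph{different} small piece. Already for an irrational rotation of the circle with $F=\{1_G,g\}$ your $Z$ fails the disjointness.

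What is actually needed --- and what is carried out in \cite[3.8, 4.4]{Szabo14}, following ideas of Gutman and Lindenstrauss --- is an inductive disjointification. One enumerates finitely many small open pieces $W_1,\dots,W_N$ covering $X$, each chosen so that its own $F$-translates have disjoint closures, and builds $Z$ by successively adjoining $W_{k+1}$ \emph{after removing} from it the set $\bigcup_{1\neq s\in F^{-1}F}\alpha_s(\overline{Z_k})$, where $Z_k$ denotes the part already constructed. Disjointness then holds by construction, and the nontrivial point becomes the covering condition: one must check that the points discarded at stage $k+1$ are still covered by $F^{-1}F$-translates of $Z_k$. The covering dimension and the Ostrand colouring enter to keep this bookkeeping consistent across the $d+1$ colours, with the elements $g_l$ shifting the colours apart so that a single set $Z$ suffices. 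Since your proposal replaces this mechanism by a direct definition of $Z$ and explicitly leaves the resulting obstruction unresolved, it does not constitute a proof; note that the paper itself does not reprove the lemma either, but simply cites \cite{Szabo14}.
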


We need one more Lemma characterizing the value $\amdim(-)$ in the case of a topological action.

\begin{lemma}\label{lem:amdim-abelian}
 Let $G$ be a countable, discrete group, $X$ a locally compact Hausdorff space, and $d\in\IN$ a natural number. Let $\alpha: G\curvearrowright X$ be a free action and $\bar{\alpha} : G \curvearrowright \CC_0(X)$ the induced action on the $C^*$-algebra. Then $\amdim(\bar{\alpha}) \leq d$ if and only if for every $\eps>0$, every finite subset $M\fin G$ and every compact subset $K \subset X$, there exist finitely supported maps $\mu^{(l)}: G\to \CC_c(X)_{1,+}$ for $l=0,\dots, d$ satisfying:
{
\renewcommand{\theenumi}{\alph{enumi}}
\begin{enumerate}
 \item $\dst \sum_{l=0}^d \sum_{g\in G} \mu^{(l)}_g\leq\eins$ ~and~ $\dst \sum_{l=0}^d \sum_{g\in G} \mu^{(l)}_g|_K = 1$;
 \item $\mu_g^{(l)} \mu_{h}^{(l)} = 0 $ for all $l = 0,\cdots,d$ and $g \neq h$ in $G$;
 \item $\mu_h^{(l)}\circ\alpha_{g^{-1}} =_\eps \mu_{gh}^{(l)}$ for all $l = 0,\cdots,d$, $g\in M$ and $h\in G$.
\end{enumerate}
}
When $X$ is compact, it suffices to check the conditions for $K= X$.
\end{lemma}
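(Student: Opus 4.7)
The plan is to prove the two directions of the equivalence separately, exploiting the commutativity of $\CC_0(X)$ and the resulting pointwise structure.

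For the ``if'' direction, we verify each clause of Definition \ref{amdim}. Given $\eps > 0$, $M \fin G$ and $F \fin \CC_0(X)$, approximate each $a \in F$ by a compactly supported function within tolerance $\eps$, and let $K \subset X$ be a compact set containing all the resulting supports. Apply the conditions of the lemma with this $K$, the same $M$, and a sufficiently small tolerance $\tilde\eps$ to obtain maps $\mu_g^{(l)} \in \CC_c(X)_{1,+}$. Clauses (a), (b) and (d) of Definition \ref{amdim} follow immediately (the last being automatic by commutativity). For clause (c), the key observation is that the exact orthogonality in (b) of the lemma forces the pointwise sums to degenerate: at each point $x$ and each $l$, at most one index $h$ contributes nonzero value to $\mu_h^{(l)} \circ \alpha_{g^{-1}}$ and at most one to $\mu_{gh}^{(l)}$. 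Combined with the pointwise sup-norm bound in (c) of the lemma, a short case analysis shows that any ``mismatch'' between these active indices is individually bounded by $\eps$, yielding a uniform bound of order $\eps$ on $\left\|\sum_{h\in E}(\bar\alpha_g \mu_h^{(l)} - \mu_{gh}^{(l)}) a\right\|$.

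For the ``only if'' direction, fix $\eps > 0$, $M \fin G$ and $K \subset X$ compact. Choose a chain $K \subset K_1 \subset K_2$ of compact subsets with $K_1 \supset \bigcup_{g \in M \cup M^{-1}} \alpha_g(K)$ and $K_2 \supset \bigcup_{g \in M \cup M^{-1}} \alpha_g(K_1)$, and select a bump function $f \in \CC_c(X)_{1,+}$ with $f|_{K_2} = 1$. Apply Definition \ref{amdim} with $F = \{f\}$, $M$ and a small tolerance $\tilde\eps$ to obtain $\tilde\mu^{(l)}: G \to \CC_0(X)_{1,+}$. Specializing condition (c) of Definition \ref{amdim} to single-element subsets $E = \{h\}$ and using $f|_{K_2} = 1$ yields the pointwise estimates $|\bar\alpha_g(\tilde\mu_h^{(l)})(x) - \tilde\mu_{gh}^{(l)}(x)| \leq \tilde\eps$ for all $x \in K_2$, $g \in M$ and $h \in G$; similarly $|u(x) - 1| \leq \tilde\eps$ on $K_2$, where $u = \sum_{l,g} \tilde\mu_g^{(l)}$, and summing the estimates over $h$ gives approximate $\bar\alpha_g$-invariance of $u$ on $K_2$.

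To produce the maps required by the lemma, we normalize and cut off simultaneously by setting $\mu_g^{(l)} = \tilde\mu_g^{(l)} \cdot \tau(u)$ via continuous functional calculus, where $\tau: [0, d+1] \to [0,1]$ is continuous with $\tau(t) \cdot t \leq 1$, $\tau(t) = 1/t$ on $[1 - \tilde\eps, 1 + \tilde\eps]$ and $\tau(t) = 0$ for $t$ below a positive threshold. Since $u \in \CC_0(X)$, the element $\tau(u)$ has compact support, so each $\mu_g^{(l)}$ lies in $\CC_c(X)_{1,+}$. Clause (a) of the lemma then holds by design, with $\sum_{l,g} \mu_g^{(l)} = u \cdot \tau(u) \leq \eins$ and equality $1$ on $K$; clause (b) is preserved by the product form; and for clause (c) one expands
\[
\mu_h^{(l)} \circ \alpha_{g^{-1}} - \mu_{gh}^{(l)} = \bigl(\tilde\mu_h^{(l)} \circ \alpha_{g^{-1}} - \tilde\mu_{gh}^{(l)}\bigr) \cdot \tau(u) \circ \alpha_{g^{-1}} + \tilde\mu_{gh}^{(l)} \cdot \bigl(\tau(u) \circ \alpha_{g^{-1}} - \tau(u)\bigr)
\]
and bounds each term using the pointwise estimates above together with the Lipschitz continuity of $\tau$. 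The main obstacle is to coordinate the chains of compact sets with the threshold of $\tau$ so that the support of the difference is contained in the region $K_2$ where the $\amdim$ estimates apply; this is resolved by using $\tau(u)$ itself (rather than an externally imposed cutoff) as the truncator, since the approximate $\bar\alpha_g$-invariance of $u$ automatically transfers to that of $\tau(u)$, avoiding the boundary errors that would arise otherwise.
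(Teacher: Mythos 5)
Your ``if'' direction (deducing $\amdim(\bar{\alpha})\leq d$ from the conditions of the lemma) is correct and is essentially the paper's argument: the exact orthogonality in (b) reduces the sum over $E$ to at most two nonzero terms at each point, each of which is controlled by the global estimate in (c).

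The converse direction, however, has a genuine gap. Definition \ref{amdim} only controls the maps $\tilde{\mu}^{(l)}$ \emph{after multiplication by elements of $F$}: with $F=\{f\}$, conditions (a) and (c) say nothing about $\tilde{\mu}^{(l)}_h(x)$ at points $x$ where $f$ is not bounded away from $0$. In particular the function $u=\sum_{l,g}\tilde{\mu}^{(l)}_g$ may exceed your threshold on compact sets far from $K_2$, arranged in a completely non-equivariant pattern, while all hypotheses of Definition \ref{amdim} remain satisfied (orthogonality can still be kept exact). At such points $\tau(u)$ does not vanish, and the almost-invariance of $u$ --- hence of $\tau(u)$ --- is simply not available there: you derived it only on $K_2$, where $f\equiv 1$. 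Consequently neither term of your decomposition of $\mu^{(l)}_h\circ\alpha_{g^{-1}}-\mu^{(l)}_{gh}$ is small: the factor $\tau(u)\circ\alpha_{g^{-1}}$ is supported on $\alpha_g(\{u\geq\delta\})$, which need not meet $K_2$, and there the difference $\tilde{\mu}_h^{(l)}\circ\alpha_{g^{-1}}-\tilde{\mu}_{gh}^{(l)}$ is only bounded by $1$; likewise $\tilde{\mu}^{(l)}_{gh}\cdot\bigl(\tau(u)\circ\alpha_{g^{-1}}-\tau(u)\bigr)$ is uncontrolled off $K_2$. So the required \emph{global} sup-norm estimate (c) of the lemma does not follow. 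What is needed is a cutoff that is compactly supported, identically $1$ on $K$, and globally almost invariant under $\alpha_g$ for $g\in M$, and which is itself placed into $F$ so that Definition \ref{amdim}(c) directly controls $\bigl(\bar{\alpha}_g(\tilde{\mu}_h^{(l)})-\tilde{\mu}^{(l)}_{gh}\bigr)\cdot a$. This is exactly how the paper proceeds: it takes $a\in\CC_c(X)_{1,+}$ with $a|_K=1$ and $a\circ\alpha_{g^{-1}}=_\eps a$ (from a quasicentral approximate unit for $\CC_0(X)\rtimes G$ inside $\CC_c(X)$), sets $F=\{a\}$, and defines $\mu^{(l)}=\nu^{(l)}\cdot a\,/\max\bigl\{\tfrac12,\sum_j\nu^{(j)}\cdot a\bigr\}$, so that multiplication by $a$ kills everything outside a controlled compact set while preserving approximate equivariance.
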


\begin{proof}
Suppose $\amdim(\bar{\alpha}) \leq d$. Then given any $\eps > 0$, any finite subset $M\fin G$ and any compact subset $K \subset X$, there exists a quasicentral approximate unit for $\CC_0(X) \rtimes G$ inside $\CC_c(X)$, so there is $a \in \CC_c(X)_{1,+}$ such that $a|_K = 1$ and $a \circ\alpha_{g^{-1}} =_\eps a$ for all $g\in M$. Let us assume without loss of generality that $\eps\leq 1/4$. Setting $F = \{a\}$, we obtain finitely supported maps $\nu^{(l)}: G\to \CC_c(X)_{1,+},~ g \mapsto \nu_g^{(l)}$ for $l=0,\dots, d$ satisfying the conditions in~\ref{amdim} for $M$, $\eps$ and $F$. In particular, we have $\sum_{l=0}^d \sum_{g\in G} \mu^{(l,j)}_g|_K =_\eps 1$. Now we define, for $l = 0,\dots, d$, the functions
 \[
  \mu^{(l)} = \frac{\nu^{(l)} \cdot a }{ \max\left\{ \frac{1}{2} , \sum_{j=0}^d \nu^{(j)} \cdot a \right\} } \; .
 \]
 A direct computation shows that the maps $\mu^{(l)}$ satisfy (a) and (b) in the statement of the lemma. As for (c), we see that 
 \[
  \big( \nu^{(l)} \cdot a \big) \circ\alpha_{g^{-1}} =_\eps  \nu^{(l)}  \circ\alpha_{g^{-1}} \cdot a =_\eps \nu^{(l)} \cdot a
 \]
 and similarly 
 \[
  \max\left\{ \frac{1}{2} , \sum_{j=0}^d \nu^{(j)} \cdot a \right\} \circ\alpha_{g^{-1}} =_{(d+2)\eps} \max\left\{ \frac{1}{2} , \sum_{j=0}^d \nu^{(j)} \cdot a \right\} \; ,
 \]
 which implies that 
\[
\begin{array}{cll} 
\mu_h^{(l,j)} \circ\alpha_{g^{-1}} &=& \dst\frac{ \left(\nu^{(l)} \cdot a \right) \circ\alpha_{g^{-1}} }{ \max\left\{ \frac{1}{2} , \sum_{j=0}^d \nu^{(j)} \cdot a \right\} \circ\alpha_{g^{-1}} } \\\\ 
  &=_{4\eps} &  \dst\frac{\nu^{(l)} \cdot a }{ \max\left\{ \frac{1}{2} , \sum_{j=0}^d \nu^{(j)} \cdot a \right\} \circ\alpha_{g^{-1}} } \\\\
  &=_{4(d+2)\eps} & \dst\frac{\nu^{(l)} \cdot a }{ \max\left\{ \frac{1}{2} , \sum_{j=0}^d \nu^{(j)} \cdot a \right\} } \\\\
  & =& \mu_h^{(l,j)}.
\end{array}
\]
Note that in the second estimate, we used the fact that the denominator is at least $\frac{1}{2}$. Since $\eps$ was arbitary, we see that condition (c) is also satisfied. 
 
 To prove the converse, let again $\eps > 0$ and finite subsets $M\fin G$ and $F \fin \CC_0(X)$ be given. Without loss of generality, we may assume $F \subset \CC_0(X)_1$. Choose a compact set $K \subset X$ such that for any $a \in F$ and $x \in X \setminus K$, we have $|a(x)| < \eps$. By our assumption, we may find finitely supported maps $\mu^{(l)}: G\to \CC_c(X)_{1,+}$ for $l=0,\dots, d$ satisfying the conditions (a), (b), and (c) in the statement of the lemma. It is clear that the functions $\mu^{(l)}$, for $l=0,\dots, d$, satisfy conditions (a), (b) and (d) in~\ref{amdim} (the last condition is automatic). For condition (c), we see that for any $l \in \{0,\dots,d \}$, any finite subset $E \fin G$ and any $x \in X$, the set 
 \[
  \left\{ h \in E ~|~ \mu_{gh}^{(l)}(x) \neq 0 \right\}
 \]
 consists of at most one element because of orthogonality. Thus
 \[
  \Big| \sum_{h \in E} \big( \bar{\alpha}_g (\mu_{h}^{(l)}) -  \mu_{gh}^{(l)} \big) (x) \Big| \leq  \sum_{h \in E_x \cup E_{\alpha_{g^{-1}}(x)}} \big| \big( \bar{\alpha}_g (\mu_{h}^{(l)}) -  \mu_{gh}^{(l)} \big) (x) \big| \leq 2 \eps \; .
 \]
 Since $\eps$ was arbitary, we conclude that $\amdim(\bar{\alpha}) \leq d$. 
\end{proof}

Combining these Lemmas, we can obtain the following result:

\begin{theorem} \label{free nilpotent amdim}
Let $G$ be a finitely generated, infinite, nilpotent group and $X$ a compact metric space of finite covering dimension. Let $\alpha: G\curvearrowright X$ be a free action. Then the induced \cstar-algebraic action $\bar{\alpha} : G \curvearrowright \CC(X)$ satisfies
\[
 \amdimeins(\bar{\alpha}) \leq 3^{\ell_{\mathrm{Hir}}(G)}\cdot\dimeins(X).
\]
\end{theorem}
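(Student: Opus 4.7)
The plan is to exploit the characterization of $\amdim$ provided by Lemma \ref{lem:amdim-abelian}. Given $\eps>0$ and finite $M\fin G$ (which I enlarge to contain $1_G$), I will construct finitely supported maps $\mu^{(l,j)} : G \to \CC(X)_{1,+}$ indexed by $(l,j) \in \set{0,\ldots,\dim X}\times\set{1,\ldots,3^{\ell_{\mathrm{Hir}}(G)}}$ satisfying properties (a)--(c) of that lemma with $K=X$; the bound then follows by a count since the number of indices is $\dimeins(X)\cdot 3^{\ell_{\mathrm{Hir}}(G)}$.

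First, apply Lemma \ref{nilpotent growth} to $M$ to obtain a finite set $F \fin G$ with $1_G \in F$ and elements $k_1,\ldots,k_m \in G$ (with $m = 3^{\ell_{\mathrm{Hir}}(G)}$) such that for every $g\in F^{-1}F$ there is $j(g)\in\set{1,\ldots,m}$ with $Mg \subset k_{j(g)}F$; in particular $g \in k_{j(g)} F$ since $1_G\in M$. Using the infinite-order central element $t$ provided by Remark \ref{group properties}(4), pick $N\in\IN$ large enough that $h_l := t^{lN}$, for $l=0,\ldots,\dim X$, are distinct and the translates $h_lF^{-1}F$ are pairwise disjoint; crucially, each $h_l$ is central in $G$. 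Apply Lemma \ref{marker property} to produce an open set $Z\subset X$ with $X = \bigcup_{l,g\in F^{-1}F}\alpha_{h_lg}(Z)$ and with $\alpha_f(\quer Z)\cap\alpha_{f'}(\quer Z) = \emptyset$ for distinct $f,f'\in F$; applying $\alpha_{h_lk_j}$ to the latter yields pairwise disjointness of $\set{\alpha_{h_lk_jf}(\quer Z)}_{f\in F}$ for every $(l,j)$. Set $I^{(j)} := \set{g\in F^{-1}F : j(g) = j}\subset k_jF$, so that $F^{-1}F = \bigsqcup_j I^{(j)}$. Fix a cutoff $\rho\in\CC(X)_{1,+}$ supported in $Z$ and form the normalizing sum $\sigma := \sum_{l',g'\in F^{-1}F}\rho\circ\alpha_{(h_{l'}g')^{-1}}$, which is strictly positive on $X$ by the covering property. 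The candidate maps are then
\[
\mu^{(l,j)}_{h_lg} := \nu^{(j)}(g)\cdot\frac{\rho\circ\alpha_{(h_lg)^{-1}}}{\sigma} \quad\text{for~} g\in F^{-1}F,
\]
and zero otherwise, where $\set{\nu^{(j)}}_j$ is a smooth partition of unity on $F^{-1}F$ subordinate to the cover $\set{k_jF}_j$.

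Conditions (a) and (b) of Lemma \ref{lem:amdim-abelian} follow directly: (a) by normalization, and (b) from the disjointness of $\alpha_{h_lk_jf}(\quer Z)$ over $f\in F$. The main obstacle is (c), the pointwise approximate equivariance $\mu^{(l,j)}_h\circ\alpha_{r^{-1}} =_\eps \mu^{(l,j)}_{rh}$ for $r\in M$ and $h\in G$. Centrality of $h_l$ gives $rh_lg = h_l(rg)$, aligning supports and ensuring $rg \in k_jF\subset F^{-1}F$ whenever $g\in I^{(j)}$; moreover the template identity $(\rho\circ\alpha_{(h_lg)^{-1}})\circ\alpha_{r^{-1}} = \rho\circ\alpha_{(h_lrg)^{-1}}$ holds exactly. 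The remaining discrepancies come from the variations $\nu^{(j)}(rg) - \nu^{(j)}(g)$ and $\sigma\circ\alpha_{r^{-1}} - \sigma$. Both can be made uniformly smaller than $\eps$ by enlarging $F$ in the central $t$-direction until it is sufficiently F\o lner for $M$: the inductive construction in the proof of Lemma \ref{nilpotent growth} permits arbitrarily large powers of $t$ to be absorbed into $F$ while preserving the number $m=3^{\ell_{\mathrm{Hir}}(G)}$ of covering tiles, and an induction on the Hirsch length yields F\o lner-ness in the remaining directions simultaneously. This F\o lner step, which is exactly the point at which the nilpotent structure of $G$ enters, is the main technical work; once it is carried out, property (c) follows and the construction is complete.
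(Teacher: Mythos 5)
Your construction of the candidate functions is close in spirit to the paper's, but the verification of condition (c) of Lemma \ref{lem:amdim-abelian} -- which you yourself identify as ``the main technical work'' -- is precisely where the proof lives, and the mechanism you sketch for it does not work. Condition (c) is a sup-norm estimate required for \emph{every} individual $h\in G$, so a single index at the boundary of the support ruins it: if $g$ lies in $F^{-1}F$ but $rg\notin F^{-1}F$ for some $r\in M$, then $\mu^{(l,j)}_{h_l rg}=0$ by your definition while $\mu^{(l,j)}_{h_l g}\circ\alpha_{r^{-1}}$ can have norm close to $1$, an $O(1)$ defect. Making $F$ F{\o}lner does not cure this, because F{\o}lner-ness only says the boundary is a small \emph{fraction} of $F$; it does not make the individual functions $\nu^{(j)}$ (or the normalization $\sigma$) taper off near that boundary, and a partition of unity subordinate to the tiles $\set{k_jF}_j$ has no reason to satisfy $\|\nu^{(j)}-\nu^{(j)}(r\cdot\_\!\_)\|_\infty\leq\eps$. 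Your appeal to ``the inductive construction in Lemma \ref{nilpotent growth}'' and ``an induction on the Hirsch length'' to produce such flatness is an assertion, not an argument, and it is exactly the missing content.

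The paper resolves this differently, and the difference is essential. It first chooses a F{\o}lner set $J\fin G$ for $(M,\eps)$, and then applies Lemma \ref{nilpotent growth} to $J$ rather than to $M$, obtaining $F$ and $h_1,\dots,h_m$ with $Jx\subset h_jF$ for each $x\in F^{-1}F$. After running the marker lemma, it takes an ordinary partition of unity $\set{\nu^{(l,j,g)}}$ subordinate to the resulting cover and then defines $\mu^{(l,j)}_g=\frac{1}{|J|}\sum_{h\in J}\nu^{(l,j,h^{-1}g)}\circ\alpha_{h^{-1}}$, i.e.\ it \emph{averages the partition of unity over $J$}. This averaging is what makes the functions taper near the edge of their (still finite, still controlled by the tiles $g_lh_jF$) supports, and the equivariance defect then telescopes to $|J\,\Delta\, rJ|/|J|\leq\eps$ with no flatness hypothesis on the $\nu$'s at all. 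The role of Lemma \ref{nilpotent growth} is precisely to guarantee that whole $J$-translates $Jg$ fit inside single tiles, so that the averaged functions remain pairwise orthogonal within each $(l,j)$-family. Without this averaging step (or an honest construction of uniformly flat, compactly supported, subordinate partitions of unity on $G$, which is a comparable amount of work), your proof has a genuine gap at condition (c). The remaining parts of your outline (the use of Lemma \ref{nilpotent growth}, the central translates, the marker lemma, and conditions (a)--(b)) are essentially correct, up to a minor fixable point about the strict positivity of your normalizing sum $\sigma$ when $\rho$ is compactly supported inside $Z$.
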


\begin{proof}
Let $d$ be the covering dimension of $X$ and $\ell$ the Hirsch-length of $G$, and denote $m=3^\ell$. Suppose we are given $\eps>0$ and finite sets $M\fin G$ and $F\fin \CC(X)$. Without loss of generality, we may assume that $F$ consists of contractions. We are going to show that there exist finitely supported maps $\mu^{(l,j)}: G\to \CC(X)_{1,+}$ for $l=0,\dots, d$ and $j=1,\dots,m$ satisfying:
{\renewcommand{\theenumi}{\alph{enumi}}
\begin{enumerate}
 \item $\dst \sum_{l=0}^d \sum_{j=1}^{m} \sum_{g\in G} \mu^{(l,j)}_g=\eins$;
 \item $\mu_g^{(l,j)} \mu_{h}^{(l,j)} = 0 $ for all $l = 0,\dots,d,~j=1,\dots,m$ and $g \neq h$ in $G$;
 \item $\mu_h^{(l,j)}\circ\alpha_{g^{-1}} =_\eps \mu_{gh}^{(l,j)}$ for all $l = 0,\dots,d,~j=1,\dots,m$, $g\in M$ and $h\in G$.
\end{enumerate}
}
By~\ref{lem:amdim-abelian}, these conditions are sufficient to imply $\amdimeins(\bar{\alpha}) \leq m (d+1)$.
Since $G$ is amenable, we can choose a F{\o}lner set
\begin{equation}  \label{eq:F15}
J\fin G\quad\text{with}\quad |J\Delta gJ| ~\leq~ \eps|J|\quad\text{for all}~g\in M.
\end{equation}
By~\ref{nilpotent growth}, we can find a finite subset $F\fin G$ containing the unit and $h_1,\dots,h_m\in G$ such that for every $x\in F^{-1}F$, we have $Jx\subset h_jF$ for some $j\in\set{1,\dots,m}$. Since $G$ is infinite and nilpotent, so is its center by~\ref{group properties}. Thus we can find $g_1,\dots, g_d$ in the center of $G$ such that the sets 
\[
F^{-1}F ~,~ g_1F^{-1}F ~,~ \dots ~,~ g_dF^{-1}F \ \fin \ G
\] 
are pairwise disjoint.
By~\ref{marker property}, there exists an open set $Z\subset X$ with
\begin{equation} \label{eq:F16}
\alpha_g(\quer{Z})\cap\alpha_h(\quer{Z})=\emptyset\quad\text{for all}~g\neq h~\text{in}~F
\end{equation}
and
\begin{equation}  \label{eq:F17}
X = \bigcup_{l=0}^d \bigcup_{g\in F^{-1}F} \alpha_{g_lg}(Z).
\end{equation}
For $l=0,\dots,d$ and $j=1,\dots,m$, define the finite sets
\[
 N^{(l,j)} = \set{g\in G ~|~ J g \subset g_l h_j F }  \fin G.
\]
For each $g \in N^{(l,j)}$, define the open set
\[
 Z^{(l,j,g)} = \alpha_{g}(Z) \subset X.
\]
We now claim that 
\[
X= \bigcup_{l=0}^d \bigcup_{j=1}^m \bigcup_{g\in N^{(l,j)} } Z^{(l,j,g)}.
\]
For any $x \in X$, we can apply \eqref{eq:F17} and find some $l\in\set{0,\dots,d}$ and $g\in F^{-1}F$ with $x\in\alpha_{g_{l}g}(Z)$. By our choice of $F$ and the elements $h_1,\dots,h_m$, we can find $j\in\set{1,\dots,m}$ with $Jg\subset h_{j}F$. But then $Jg_lg \subset g_lh_j F$, so we see that $g_lg\in N^{(l,j)}$ and $x\in Z^{(l,j,g_lg)}$.

We may find a partition of unity 
\[
\set{ \nu^{(l,j,g)} ~|~ g\in N^{(l,j)},~ l = 0,\dots,d,~ j = 1,2,\dots,m }
\] 
of $X$ subordinate to the open cover 
\[
\set{ Z^{(l,j,g)} ~|~ g\in N^{(l,j)},~ l = 0,\dots,d,~ j = 1,2,\dots,m }. \footnote{Note that we allow repetions with this notation.}
\] 
 For the sake of convenience, let us set $\nu^{(l,j,g)} = 0$ for all $l = 0,\dots,d$ and $j = 1,\dots,m$, whenever $g \notin N^{(l,j)}$. 

For every $l=0,\dots,d$ and $j=1,\dots,m$, we now define the finitely supported maps $\mu^{(l,j)}: G\to\CC(X)_{1,+}$ via
\[
\mu^{(l,j)}_g = \frac{1}{|J|} \sum_{h\in J} \nu^{(l,j,h^{-1}g)}\circ\alpha_{h^{-1}}
\]
We claim that these satisfy the desired properties. Firstly, the support of each map $\mu^{(l,j)}$ is contained in $J\cdot N^{(l,j)}$, and is hence finite. Secondly, since each map $\nu^{(l,j,g)}$ has values in $[0,1]$, so does each $\mu^{(l,j)}_g$ by triangle inequality. We have
\[
\begin{array}{ccl}
 \dst\sum_{l=0}^d \sum_{j=1}^m \sum_{g\in G} \mu^{(l,j)}_g
 &=& \dst\sum_{l=0}^d \sum_{j=1}^m \sum_{g\in G} \frac{1}{|J|} \sum_{h \in J}  \nu^{(l,j, h^{-1} g)} \circ \alpha_{h^{-1}} \\
 &=& \dst\frac{1}{|J|} \sum_{h \in J} \Big( \underbrace{ \sum_{l=0}^d \sum_{j=1}^m \sum_{g\in G} \nu^{(l,j, h^{-1} g)} }_{=\eins} \Big) \circ \alpha_{h^{-1}} \\
 &=& \eins.
\end{array}
\]
It follows that condition (a) holds.

Next we observe that for all $l = 0,\dots,d$, $j = 1,\dots,m$ and $g \in G$, the open support of $\mu^{(l,j)}_g$ satisfies
\begin{align*}
 \supp(\mu^{(l,j)}_g) & \subset \bigcup_{h \in J} \alpha_{h}\bigl( \supp(\nu^{(l,j, h^{-1} g)}) \bigl)  \\
 & \subset
 \begin{cases}
  \bigcup_{h \in J} \alpha_{h} ( \alpha_{h^{-1} g}(Z)) = \alpha_{g}(Z) &,~\text{if}~ g \in J\cdot N^{(l,j)} \\
  \emptyset  &,~\text{if}~ g \notin J\cdot N^{(l,j)}. 
 \end{cases}
\end{align*}
By definition of the set $N^{(l,j)}$, we have $J\cdot N^{(l,j)}\subset g_lh_jF$.
So let $g'\in G$ be an element different from $g$. If $g\notin J\cdot N^{(l,j)}$ or $g'\notin J\cdot N^{(l,j)}$, then we trivially have $ \mu_g^{(l,j)} \mu_{g'}^{(l,j)} = 0 $ because one of the functions is zero already. If $g,g'\in  J\cdot N^{(l,j)}\subset g_lh_j F$, then we can write $g=g_lh_jf_1$ and $g'=g_lh_jf_2$ for some $f_1\neq f_2$ in $F$. By \eqref{eq:F16}, it follows that $\alpha_{f_1}(Z)\cap\alpha_{f_2}(Z)=\emptyset$, and thus $\alpha_g(Z)\cap\alpha_{g'}(Z)=\emptyset$. In particular, the two functions $\mu^{(l,j)}_g$ and $\mu^{(l,j)}_{g'}$ have disjoint open supports and are therefore orthogonal. This verifies condition (b).

Lastly, for all $l = 0,\dots,d$,~ $j = 1,2,\dots,m$,~ $g\in M$ and $g' \in G$, we calculate
\[
\def\arraystretch{2}
\begin{array}{cll}
 \multicolumn{3}{l}{ \|\mu^{(l,j)}_{g'}\circ\alpha_{g^{-1}} - \mu_{gg'}^{(l,j)} \| } \\
 &=& \dst\frac{1}{|J|} \Big\| \sum_{h \in J}  \nu^{(l,j, h^{-1} g')} \circ \alpha_{h^{-1}} \circ \alpha_{g^{-1}} -  \sum_{h \in J}  \nu^{(l,j, h^{-1} g g')} \circ \alpha_{h^{-1}}  \Big\| \\
 &=& \dst\frac{1}{|J|} \Big\| \sum_{h \in J}  \nu^{(l,j, (gh)^{-1} g g')} \circ \alpha_{(gh)^{-1}}  -  \sum_{h \in J}  \nu^{(l,j, h^{-1} g g')} \circ \alpha_{h^{-1}} \Big\| \\
 &=& \dst\frac{1}{|J|} \Big\| \sum_{h \in gJ}  \nu^{(l,j, h^{-1} g g')} \circ \alpha_{h^{-1}}  -  \sum_{h \in J}  \nu^{(l,j, h^{-1} g g')} \circ \alpha_{h^{-1}} \Big\| \\
 &=& \dst\frac{1}{|J|} \Big\| \sum_{h \in gJ \setminus J}  \nu^{(l,j, h^{-1} g g')} \circ \alpha_{h^{-1}}  -  \sum_{h \in J \setminus gJ}  \nu^{(l,j, h^{-1} g g')} \circ \alpha_{h^{-1}} \Big\| \\
 &\leq& \dst\frac{1}{|J|} \Big( \sum_{h \in gJ \setminus J} \| \nu^{(l,j, h^{-1} g g')} \circ \alpha_{h^{-1}} \| + \sum_{h \in J\setminus gJ} \| \nu^{(l,j, h^{-1} g g')} \circ \alpha_{h^{-1}} \| \Big) \\
 &\leq& \dst\frac{|J \Delta g J|}{|J|} ~\stackrel{\eqref{eq:F15}}{\leq}~ \eps.
\end{array}
\]
This verifies condition (c) and finishes the proof.
\end{proof}

\begin{cor} \label{free nilpotent dimrok}
Let $G$ be a finitely generated, infinite, nilpotent group and $X$ a compact metric space of finite covering dimension. Let $\alpha: G\curvearrowright X$ be a free action. Then the \cstar-algebraic action $\bar{\alpha}: G\curvearrowright\CC(X)$ has finite Rokhlin dimension, and in fact
\[
\dimrokeins(\bar{\alpha}) \leq 3^{\ell_{\mathrm{Hir}}(G)}\cdot\dimeins(X).
\]
\end{cor}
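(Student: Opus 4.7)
The corollary should follow almost immediately by combining two results established earlier in the paper. The plan is to apply Theorem \ref{dimrok amdim} together with Theorem \ref{free nilpotent amdim}.

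First, I would note that by \ref{group properties}(1), every finitely generated nilpotent group is residually finite, so $G$ admits at least one regular approximation $\sigma \in \Lambda(G)$. In particular, Theorem \ref{dimrok amdim} applies to any cocycle action of $G$ and, specialised to the (untwisted) C*-algebraic action $\bar{\alpha}: G \curvearrowright \CC(X)$, yields the inequality
\[
\dimrokeins(\bar{\alpha}) \leq \amdimeins(\bar{\alpha}).
\]
Note we only need the left half of the chain of inequalities in \ref{dimrok amdim}; there is no need to choose a specific $\sigma$ or to estimate $\asdim(\square_\sigma G)$ here.

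Next, I would invoke Theorem \ref{free nilpotent amdim} directly, which gives the bound
\[
\amdimeins(\bar{\alpha}) \leq 3^{\ell_{\mathrm{Hir}}(G)}\cdot\dimeins(X).
\]
This is applicable because $G$ is finitely generated, infinite, and nilpotent, and $\alpha$ is a free action on a finite-dimensional compact metric space — precisely the hypotheses of \ref{free nilpotent amdim}. Concatenating the two inequalities yields the claim.

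There is no real obstacle here, as the combinatorial and topological work has already been done in the previous two theorems; the corollary is essentially a formal consequence packaging Rokhlin dimension as the weaker invariant controlled by amenability dimension for the class of groups in question.
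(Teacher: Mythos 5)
Your proposal is correct and matches the paper's own proof, which likewise deduces the corollary by combining \ref{free nilpotent amdim} with the inequality $\dimrokeins(\bar{\alpha}) \leq \amdimeins(\bar{\alpha})$ from \ref{dimrok amdim}. Your additional remark that only the left half of the chain of inequalities in \ref{dimrok amdim} is needed (so no specific regular approximation must be chosen) is a fair and accurate observation.
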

\begin{proof}
This follows directly from~\ref{free nilpotent amdim} and~\ref{dimrok amdim}.
\end{proof}

\begin{cor} \label{free nilpotent dimnuc}
Let $G$ be a finitely generated, infinite, nilpotent group and $X$ a compact metric space of finite covering dimension. Let $\alpha: G\curvearrowright X$ be a free action. Then the transformation group \cstar-algebra $\CC(X)\rtimes_\alpha G$ has finite nuclear dimension, and in fact
\[
\dimnuceins(\CC(X)\rtimes_\alpha G) \leq 3^{\ell_{\mathrm{Hir}}(G)} \cdot \asdimeins(\square_s G) \cdot \dimeins(X)^2.
\]
\end{cor}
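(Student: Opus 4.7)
The plan is to combine the main nuclear dimension estimate in Theorem \ref{Hauptsatz} with the Rokhlin dimension bound from Corollary \ref{free nilpotent dimrok}. Since $G$ is finitely generated and virtually nilpotent, by \ref{group properties}(1) it is residually finite and by \ref{dominating fg} it admits a dominating regular approximation, so the standard box space $\square_s G$ is meaningful and Remark \ref{dimrok-domi} applies. In particular, Theorem \ref{Hauptsatz} yields
\[
\dimnuceins(\CC(X)\rtimes_\alpha G) \leq \asdimeins(\square_s G)\cdot\dimrokeins(\bar\alpha)\cdot\dimnuceins(\CC(X)).
\]

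Next I would invoke the classical identity $\dimnuc(\CC(X)) = \dim(X)$ for a compact metric space $X$ (a standard fact from \cite{WinterZacharias10}), giving $\dimnuceins(\CC(X))=\dimeins(X)$. Then I would plug in the bound $\dimrokeins(\bar\alpha)\leq 3^{\ell_{\mathrm{Hir}}(G)}\cdot\dimeins(X)$ supplied by Corollary \ref{free nilpotent dimrok}. Multiplying these estimates together produces exactly
\[
\dimnuceins(\CC(X)\rtimes_\alpha G) \leq 3^{\ell_{\mathrm{Hir}}(G)}\cdot\asdimeins(\square_s G)\cdot\dimeins(X)^2,
\]
as desired.

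Finiteness of the right-hand side is automatic: the Hirsch length of a finitely generated nilpotent group is finite, $\asdim(\square_s G)<\infty$ by Theorem \ref{asdim nilpotent}, and $\dim(X)$ is finite by hypothesis. So there is no genuine obstacle beyond a careful assembly of the existing estimates; the only minor point worth checking is that we are applying Theorem \ref{Hauptsatz} with the dominating approximation (so that $\dimrok(\bar\alpha,\sigma)=\dimrok(\bar\alpha)$), which is precisely what Remark \ref{dimrok-domi} guarantees.
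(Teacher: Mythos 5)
Your proposal is correct and follows exactly the paper's own route: the paper deduces the corollary by combining Corollary \ref{free nilpotent dimrok}, Theorem \ref{asdim nilpotent} and Theorem \ref{Hauptsatz}, together with the standard identity $\dimnuc(\CC(X))=\dim(X)$, which is precisely your assembly of estimates. Your extra remarks on residual finiteness, the dominating approximation, and Remark \ref{dimrok-domi} are just the details the paper leaves implicit.
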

\begin{proof}
This follows directly from~\ref{free nilpotent dimrok},~\ref{asdim nilpotent} and~\ref{Hauptsatz}.
\end{proof}

\begin{rem}
We remark that one can generalize the above results~\ref{free nilpotent dimrok} and~\ref{free nilpotent dimnuc} to actions on locally compact spaces. A rigorous treatment of this can be found in the first author's dissertation~\cite{Szabo_Diss}, which follows a very similar approach, using a generalized version of~\ref{marker property}. (Note that a variant of this can be found in the appendix of~\cite{HirshbergWu15}.)
\end{rem}

\begin{rem}
Through recent groundbreaking progress in the Elliott programme by many hands, the combined main results of~\cite{GongLinNiu14, ElliottNiu15, ElliottGongLinNiu15, TikuisisWhiteWinter15} have completed the classification of separable, unital, simple \cstar-algebras that satisfy the UCT and have finite nuclear dimension. Now transformation group \cstar-algebras of amenable groups are well-known to satisfy the UCT due to~\cite{Tu99}, and they are further known to be simple if the action is free and minimal~\cite{KawamuraTomiyama90}. Therefore, the main result of this section implies the classifiability of a large class of simple transformation group \cstar-algebras:
\end{rem}

\begin{theorem} \label{crossed product classifiable}
Let $G$ be a finitely generated, infinite, nilpotent group and $X$ a compact metric space of finite covering dimension. Let $\alpha: G\curvearrowright X$ be a free and minimal action. Then the transformation group \cstar-algebra $\CC(X)\rtimes_\alpha G$ is a simple ASH algebra of topological dimension at most 2.
\end{theorem}


\section{Amenability dimension for topological actions} \label{sec:dimBLR}

In this section, we show that for topological actions, the invariant $\amdim(\bar{\alpha})$ defined in~\ref{amdim} coincides with a dimension concept studied by Guentner, Willett and Yu in~\cite{GuentnerWillettYu15}\footnote{Note that instead of writing ``$\amdim(\bar{\alpha})\leq d$'' they write ``$\alpha$ is $d$-BLR''. Our terminology is inspired by an earlier draft of their paper. Another closely related notion they introduced in the same paper is termed dynamic asymptotic dimension.}. We remark that before this work, a similar concept had been defined, at least implicitly, in the pioneering work~\cite{BartelsLuckReich08} of Bartels, L\"{u}ck and Reich on the Farrell-Jones conjecture for hyperbolic groups.

\defi Let $G$ be a countable, discrete group and let $X$ be a compact metric space. Let $\alpha: G\curvearrowright X$ be an action. Then we denote by $\Delta: G\curvearrowright G\times X$ the diagonal action defined by $\Delta_g(h,x) = (hg^{-1}, \alpha_g(x))$.

{
\defi[see {\cite[4.10, 4.14]{GuentnerWillettYu15}}] \label{top-amdim}
Let $G$ be a countable, discrete group and let $X$ be a compact metric space. Let $\alpha: G\curvearrowright X$ be a free action.
The \emph{amenability dimension} of $\alpha$, denoted $\amdim(\alpha)$, is defined to be the smallest natural number $d$ with the following property: 
For any finite subset $M \fin G$, there exists an open cover $\CU$ of $G \times X$ satisfying:
 {\renewcommand{\theenumi}{\alph{enumi}}
 \begin{enumerate}
  \item $\CU$ is a so-called \emph{free $G$-cover} with regard to the diagonal action $\Delta$; that is, for any $U \in \CU$ and $1\neq g \in G$, we have $\Delta_g(U) \in \CU$ and $U \cap \Delta_g(U) = \emptyset$.
  \item The multiplicity of $\CU$ is at most $d+1$.
  \item For any $x\in X$, the set $M \times\set{x}$ is contained in a member of $\CU$.
 \end{enumerate}
 }
If no such $d$ exists, we write $\amdim(\alpha)=\infty$. \\
}

Next we shall see that this definition is a special case of~\ref{amdim}, thus reconciling the potential conflict of notation. To this end, let us fix some terminology: 

\defi For us, an \emph{abstract $G$-simplicial complex} $Z$ consists of:
\begin{itemize}
 \item a set $Z_0$, called the set of \emph{vertices}, which comes equipped with a $G$-action, and
 \item a $G$-invariant collection of its finite subsets closed under taking subsets, called the collection of \emph{simplices}. 
\end{itemize}
We often write $\sigma \in Z$ to denote that $\sigma$ is a simplex of $Z$. The dimension of a simplex is the cardinality of the corresponding finite subset minus $1$, and the dimension of the abstract $G$-simplicial complex is the supremum of the dimensions of its simplices. The \emph{geometric realization} of an abstract $G$-simplicial complex $Z$, denoted as $|Z|$, is the set of formal sums
\[
|Z| = \set{ \sum_{v\in\sigma} \lambda_v v ~\Big|~ \sigma\in Z~\text{and}~ \lambda_v\geq 0~\text{with}~\sum_{v\in\sigma} \lambda_v = 1 }.
\]
By convention, $\lambda_v$ is defined to be zero for $v \in Z_0 \setminus \sigma$. Similarly for a simplex $\sigma$ of $Z$, we define its geometric realization $|\sigma|$ to be 
\[
 \set{ \sum_{v\in\sigma} \lambda_v v ~\Big|~  \lambda_v\geq 0~\text{with}~\sum_{v\in\sigma} \lambda_v = 1 } \subset |Z| .
\]
The space $|Z|$ carries a natural $G$-action, where 
\[
 g \cdot \sum_{v\in\sigma} \lambda_v v = \sum_{v\in\sigma} \lambda_v (g \cdot v) \in |g \cdot \sigma|
\]
for any $g \in G$, $\sigma \in Z$ and $\sum_{v\in\sigma} \lambda_v v \in |\sigma|$. If this action is free, then we call $Z$ an abstract \emph{free} $G$-simplicial complex. Usually $Z$ is equipped with the weak topology, but for our purposes we consider the $\ell^1$-topology, induced by the $\ell^1$-metric $d^1: Z \times Z \to [0, 2]$ given by 
\[ 
d^1\Big( \sum_{v \in \sigma} \lambda_v v , \sum_{v \in \sigma'} \mu_v v \Big) = \sum_{v \in \sigma \cup \sigma'} |\mu_v - \lambda_v | .
\]
This metric is obviously $G$-invariant. 

{
\lemma \label{amdim functions}
Let $G$ be a countable, discrete group and let $X$ be a compact Hausdorff space. Let $\alpha: G\curvearrowright X$ be a free action and $d\in\IN$ a natural number.
Then the following are equivalent:
\begin{enumerate}
  \item The amenability dimension of $\alpha$ is at most $d$.
  \item For every $M \fin G$ and $\eps > 0$, there exists a free $G$-simplicial complex $Z$ of dimension at most $d$ together with a continuous map $\phi: X \to |Z|$ that is $(M, \eps)$-approximately equivariant\footnote{In \cite{GuentnerWillettYu15}, the terminology ``$(M, \eps)$-equivariant'' is used instead.}, i.e. $ d^1( \phi(\alpha_g(x)), g \cdot \phi(x) ) \leq \eps$ for all $x \in X$ and $g \in M$.
  \item For the induced action $\bar{\alpha}: G \curvearrowright \CC(X)$, we have $\amdim(\bar{\alpha}) \leq d$.
\end{enumerate}
}
\begin{proof}$(1)\implies (2):$ We follow the proof of \cite[4.6, (ii)$\implies$(i)]{GuentnerWillettYu15}, though some extra care is necessary as the authors of that paper did not discuss free simplicial complexes, but rather complexes whose \emph{vertex stabilizers} are from a fixed collection $\mathcal{F}$ of subgroups of $G$ (e.g., the collection of all finite subgroups; see \cite[4.3]{GuentnerWillettYu15}). 
	
	Let $M \fin G$ and $\eps > 0$ be given. Replacing $M$ by $M \cup M^{-1} \cup \{1\}$, we may assume that $M$ is symmetric and contains the identity element. Pick $n$ with 
	\[
		\frac{(2d+2)(4d+6)}{n} < \varepsilon \; .
	\]
	Since the amenability dimension of $\alpha$ is at most $d$, we can find a cover $\CU$ of $G\times X$ as in Definition~\ref{top-amdim} with $M$ replaced by $M^n$. Let $Z$ be the nerve of this cover, i.e. $Z_0 = \CU$ and a finite subset $\{U_1, U_2, \ldots, U_k \}$ of $\CU$ is in $Z$ if and only if $\bigcap_{i=1}^{k} U_i \not= \varnothing$. Since $\CU$ is $G$-invariant, we see that $Z$ becomes a $G$-simplical complex under the $G$-action on the vertex set $\CU$. The dimension of $Z$ is simply the multiplicity of $\CU$ minus $1$ and thus is at most $d$. We also observe that the action of $G$ on $|Z|$ is free. Indeed, if this were not the case, we would be able to find a nontrivial element $g \in G$ and a point $z$ such that $g \cdot z = z$. Write $z = \sum_{v \in \sigma} \lambda_v v$ with $\lambda_v > 0$ for each $v \in \sigma$. Then the simplex $\sigma$ is fixed by $g$, too. Thus picking an element $U$ in $\sigma$, viewed as an element of $\CU$, we would have $g \cdot U \cap U \not= \varnothing$, which is a contradiction to the fact that $\CU$ is a free $G$-cover. In summary, $Z$ is a free $G$-simplicial complex of dimension at most $d$. 
	
	The construction of the continuous map $\phi \colon X \to |Z|$ is identical to that in the proof of \cite[4.6, (ii)$\implies$(i)]{GuentnerWillettYu15} and is thus omitted. 

$(2)\implies (3):$ Given any free $G$-simplicial complex $Z$ of dimension no more than $d$, there is a canonical $G$-invariant cover $\CU$ constructed as follows. For all $l=0,\dots,d$ and every $l$-dimensional simplex $\sigma$ of $Z$, we define an open subset 
\[
U_\sigma = \set{ \sum_{v\in Z_0} \lambda_v v ~|~ \lambda_v > \lambda_{v'}~\text{for all}~ v\in\sigma~\text{and}~ v'\in Z_0\setminus\sigma  }.
\]
Then $U_{g\cdot\sigma} = g \cdot U_\sigma$ and $U_\sigma \cap U_{\sigma'} = \emptyset$ for $\sigma \neq \sigma'$ of the same dimension. Now $\CU$ is defined to be the collection $\CU = \set{ U_\sigma ~|~ \sigma\in Z}$, which gives us an open cover of $|Z|$ of multiplicity at most $d+1$. 

This allows us to define a partition of unity $\set{ \nu_\sigma }_{\sigma\in Z}$ subordinate to $\CU$ by setting
\[
\dst\nu_\sigma : |Z| \to [0,1]~,\quad z \mapsto \frac{d^1(z, |Z| \setminus U_\sigma)}{ \sum_{\sigma'\in Z} d^1(z, |Z| \setminus U_{\sigma'}) } .
\]
Observe that $\nu_{g\cdot\sigma} = g \cdot \nu_\sigma$ for all $\sigma\in Z$ and $\nu_\sigma \cdot \nu_{\sigma'} = 0 $, whenever $\sigma \neq \sigma'$ are two simplices of the same dimension. 

Moreover, we claim that each $ \nu_\sigma $ is Lipschitz with respect to the constant $(d+1)(d+2)$. Since $|Z|$ is a geodesic space, with geodesics being piecewise linear paths in $|Z|$, it suffices to show that all the directional derivatives of $ \nu_\sigma $ along linear paths are bounded by this constant. Observe that for any $ z  = \sum_{v \in Z_0} \lambda_v v \in U_\sigma $, the point in $|Z| \setminus U_\sigma$ that is closest to $z$ differs from $z$ only by replacing one of the $(\dim(\sigma) + 1 )$-th greatest coordinates of $ z $ and one of the $(\dim(\sigma) + 2 )$-th greatest coordinates of $ z $ by the average of the two values, and thus we have
\[
	d^1(z, |Z| \setminus U_\sigma) = \lambda^{(\dim(\sigma) + 1 )} - \lambda^{(\dim(\sigma) + 2 )} ~,
\]
where $ \lambda^{(l)}$ is the value of the $l$-th greatest coordinate of $ z $ for all $l\in\IN$. Hence for any $ z  = \sum_{v \in Z_0} \lambda_v v \in |Z| $, we have
\[
\dst\nu_\sigma ( z ) = \chi_{{U_\sigma}} (z) \cdot \frac{ \lambda^{(\dim(\sigma) + 1 )} - \lambda^{(\dim(\sigma) + 2 )} }{ \sum_{l =2}^{\operatorname{card}(Z_0)} (\lambda^{(l -1 )} - \lambda^{(l )}) } = \chi_{{U_\sigma}} (z) \cdot \frac{ \lambda^{(\dim(\sigma) + 1 )} - \lambda^{(\dim(\sigma) + 2 )} }{ \lambda^{(1)} } ~,
\]
where $\chi_{{U_\sigma}} $ is the characteristic function for $ U_\sigma $ and $\operatorname{card}(Z_0)$ is the cardinality of $Z_0$ (in fact, we are only dealing with a finite sum). Since $ z $ has no more than $d+1$ positive terms and they sum up to $1$, it follows that $\lambda^{(1)} \ge \frac{1}{d+1} $. Thus for any finite sum $ w  = \sum_{v \in Z_0} \eta_v v $ with $ \sum_{v \in Z_0} |\eta_v| = 1 $ and $ z + t w \in |Z| $ for any $t$ in a small neighborhood of $0$ in $\mathbb{R}^{\ge 0}$, we have
\[
\dst \Big| \frac{\mathrm{d}\:\nu_\sigma ( z + t w )}{\mathrm{d}t} \Big|_{t=0^+} \Big| ~\le~ \frac{ 1 }{ \lambda^{(1)} } + \frac{ 1 }{ ( \lambda^{(1)} )^2 } \le (d+1)(d+2) ,
\]
which proves what we claimed.\footnote{Another way to give an estimate of the Lipschitz constant is by showing that the cover $ \CU$ has Lebesgue number at least $\frac{2}{(d+1)(d+2)}$ and use~\cite[4.3.5]{NowakYuLSG}.}

Now given any finite subset $M \fin G$ and $\eps > 0$, we apply the assumption to get a free $G$-simplicial complex $Z$ of dimension no more than $d$ and a $(M^{-1}, \frac{\eps}{2(d+1)(d+2)})$-approximately equivariant continuous map $\phi: X \to |Z|$. Since $X$ is compact, by~\cite[4.5]{GuentnerWillettYu15}, we may assume without loss of generality that the image of $\phi$ intersects finitely many simplices in $Z$. Let $\set{ \nu_\sigma }$ be the partition constructed above. For every $l = 0, \dots, d$, pick a set $\Sigma^{(l)}$ of representatives for the $G$-action on the set of $l$-dimensional simplices in $Z$, and define $\nu^{(l)} = \sum_{\sigma \in \Sigma^{(l)}} \nu_\sigma$.
Since there is only one non-zero summand at every point $z \in |Z|$, the sum is well defined and we see that each $\nu^{(l)}$ is also Lipschitz with respect to the constant $(d+1)(d+2)$. Additionally, we have $\nu^{(l)} \cdot (g \cdot \nu^{(l)})=0$ for all $g \in G\setminus\set{1}$ and 
\[
\sum_{l=0}^d \sum_{g \in G} (g \cdot \nu^{(l)})(z) = 1 \quad\text{for all}~ z \in |Z|.
\]
We define the maps $\mu^{(l)}: G\to \CC(X)_+$ as the pullbacks $\mu^{(l)}_g = (g \cdot \nu^{(l)} ) \circ \phi$ for all $l = 0, \dots, d$ and $g\in G$. We claim that these maps satisfy the conditions in~\ref{lem:amdim-abelian}. As assumed above, the image of $\phi$ intersects only finitely many simplices in $Z$. Since, moreover, each simplex intersects only finitely many $U_\sigma$'s, it follows that the image of $\phi$ intersects only finitely many $U_\sigma$'s and thus there are only finitely many $g \in G$ such that $\mu^{(l)}_g \neq 0 $. Conditions (a) and (b) follow directly from the above properties of the collection $\set{ \nu^{(l)} }_{l=0,\dots,d}$. Finally we check that for all $l = 0,\dots,d$,~ $g\in M$,~ $h\in G$ and $x \in X$, we have
\[
\begin{array}{cll}
\bar{\alpha}_g(\mu_h^{(l)})(x) &=& \mu_h^{(l)}(\alpha_{g^{-1}}(x)) \\
&=& (h\cdot\nu^{(l)})\circ\phi\circ\alpha_{g^{-1}}(x) \\
&=_{\eps/2}& (h\cdot\nu^{(l)})(g^{-1}\cdot\phi(x)) \\
&=& (gh\cdot\nu^{(l)})\circ\phi(x) = \mu_{gh}^{(l)}(x). 
\end{array}
\]
Note that for the intermediate approximation step, we have used that the function $h\cdot\nu^{(l)}$ is Lipschitz with respect to the constant $(d+1)(d+2)$, combined with $d^1( g^{-1}\cdot\phi(x), \phi\circ\alpha_{g^{-1}}(x))\leq\eps/2(d+1)(d+2)$. Hence condition (c) follows.

$(3)\implies (1):$ Given any finite subset $M \fin G$, we use the assumption to get finitely supported maps $\mu^{(l)}: G\to \CC(X)_+$ satisfying the conditions in~\ref{lem:amdim-abelian} for $(M, \frac{1}{d+1})$. For $l= 0,\dots,d$, define an open set
\[
U^{(l)} = \set{ (g, x)\in G\times X ~|~ \mu_g^{(l)}(\alpha_{g}(x)) > 0 }
\]
and put $\CU^{(l)} := \set{ \Delta_g(U^{(l)}) ~|~ g\in G }$. We claim that $\CU := \CU^{(0)} \cup \dots \cup \CU^{(d)} $ is an open cover satisfying the conditions in~\ref{top-amdim}.

First we observe that the $G$-invariance of $\CU$ is built into the definition. Now it holds for any $(h, x) \in G\times X$, $l=0,\dots,d$ and $g\in G$ that $(h, x)\in \Delta_g\left( U^{(l)} \right)$ if and only if $\mu_{hg}^{(l)}(\alpha_{h}(x)) > 0$. Since for two distinct $g, g' \in G$, one has $\mu_{hg}^{(l)} \mu_{hg'}^{(l)} = 0$, it follows that $(h, x)$ cannot be in both $\Delta_g(U^{(l)})$ and $ \Delta_{g'}(U^{(l)})$. This proves that for each $l=0,\dots,d$, the collection $\CU^{(l)}$ is $G$-invariant and consists of pairwise disjoint sets.

Finally, we observe that for any $(h, x)\in G\times X$, there is $g_0\in G$ and $l_0\in\set{0,\dots,d}$ such that $\mu^{(l)}_{hg_0}(\alpha_{h}(x)) \geq \frac{1}{d+1}$. This is because we have $1=\sum_{l=0}^d\sum_{g\in G} \mu_g^{(l)}(\alpha_h(x))$ and the collections of functions $\set{\mu^{(l)}_g ~|~ g\in G}$ consist of pairwise orthogonal functions for all $l=0,\dots,d$.
It follows from condition (c) that for any $b \in M$,
\[
\mu^{(l)}_{bhg_0}(\alpha_{bh}(x)) > \mu^{(l)}_{hg_0}(\alpha_h(x))-\frac{1}{d+1} ~\geq~ 0.
\]
But this is equivalent to $(bh, x)\in\Delta_{g_0}(U^{(l)})$. Hence it follows that $Mh\times\set{x}$ is contained in a member of $\CU$. In particular, $\CU$ indeed covers $G\times X$. This yields all the conditions in order to deduce $\amdim(\alpha)\leq d$.
\end{proof}

It turns out that amenability dimension for free actions is a notion that generally behaves well with respect to the nuclear dimension of the transformation group \cstar-algebra. The following result is due to Guentner, Willett and Yu:

\begin{theorem}[see {\cite[8.6, 4.11]{GuentnerWillettYu15}}] \label{amdim dimnuc}
Let $G$ be a countable, discrete group and $X$ a compact metric space. Let $\alpha: G\curvearrowright X$ be a free action. Then
\[
\dimnuceins(\CC(X)\rtimes_\alpha G) \leq \amdimeins(\alpha)\cdot\dimeins(X).
\]
\end{theorem}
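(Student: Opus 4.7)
The bound has the same shape as Theorem \ref{Hauptsatz} applied to $A=\CC(X)$, except that the product $\asdimeins(\square_\sigma G)\cdot\dimrokeins(\alpha,\sigma)$ is replaced by the single quantity $\amdimeins(\alpha)$. In view of Theorem \ref{dimrok amdim}, this is qualitatively consistent, and the plan is to adapt the proof of \ref{Hauptsatz} so that the role played there by the combination of the decay functions from the box space (\ref{decay functions}) and the Rokhlin tower from \ref{quantifier dimrok} is now played by a single family of functions coming from the characterization of amenability dimension in Lemma \ref{lem:amdim-abelian}. Set $d=\amdim(\alpha)$ and $r=\dim(X)=\dimnuc(\CC(X))$, and assume both are finite.

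Given $F\fin\CC(X)\rtimes_\alpha G$ and $\delta>0$, reduce, as in the beginning of the proof of \ref{Hauptsatz}, to the case $F=\set{au_g\mid a\in F',\ g\in M}$ with $F'\fin\CC(X)_1$ and $M\fin G$. For a tolerance $\eps>0$ to be chosen small relative to $\delta$, apply Lemma \ref{lem:amdim-abelian} to obtain finitely supported families $\mu^{(l)}\colon G\to\CC(X)_{1,+}$, $l=0,\dots,d$, satisfying $\sum_{l,g}\mu_g^{(l)}=\eins$, $\mu_g^{(l)}\mu_h^{(l)}=0$ for $g\neq h$, and $\mu^{(l)}_h\circ\alpha_{g^{-1}}=_\eps\mu^{(l)}_{gh}$ for $g\in M$. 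Let $B^{(l)}=\supp(\mu^{(l)})\fin G$ and $D_l=\diag(\mu^{(l)}_h)_{h\in B^{(l)}}\in M_{|B^{(l)}|}(\CC(X))$. Represent $\CC(X)\rtimes_\alpha G$ on $\ell^2(G)\otimes H$ via the regular representation of \ref{leftregular}, let $Q_l$ denote the projection onto $\ell^2(B^{(l)})\otimes H$, and define $\theta_l(x)=\sqrt{D_l}\,Q_l x Q_l\sqrt{D_l}$, a c.p.c.~map into $M_{|B^{(l)}|}(\CC(X))$. The estimate analogous to \eqref{eq:E4} in the proof of \ref{Hauptsatz}, using the uniform continuity of the square root and the approximate equivariance of the $\mu^{(l)}$, yields $\theta_l(au_g)=_\delta D_l Q_l au_g Q_l$ for $a\in F'$, $g\in M$.

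Now apply the characterization of nuclear dimension in \ref{dimnuc sequence} to $\CC(X)$ to produce an $r$-decomposable c.p.~approximation $(\CF,\psi,\phi^{(0)}+\dots+\phi^{(r)})$ on the finite set of matrix entries of the $\theta_l(au_g)$, with sufficiently fine tolerance. Amplify to $\psi_l=\id_{|B^{(l)}|}\otimes\psi$ and $\phi_{l,i}=\id_{|B^{(l)}|}\otimes\phi^{(i)}$. For the dilation step, define
\[
\sigma_l\colon M_{|B^{(l)}|}(\CC(X))\to\CC(X)\rtimes_\alpha G,\quad \sigma_l(e_{g,h}\otimes a)=u_{g^{-1}}^{*}\sqrt{\mu^{(l)}_g}\,a\,\sqrt{\mu^{(l)}_h}\,u_{h^{-1}},
\]
which is manifestly c.p.~since $\sigma_l(x)=v_l x v_l^{*}$ for the row $v_l=(u_{h^{-1}}^{*}\sqrt{\mu^{(l)}_h})_{h\in B^{(l)}}$. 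The order zero property of $\sigma_l$ follows from the pointwise orthogonality $\mu^{(l)}_g\mu^{(l)}_h=0$ for $g\neq h$ together with the approximate equivariance, by a computation that parallels the long calculation of order zero in the proof of \ref{Hauptsatz}, but is in fact cleaner here because all the relevant relations hold in $\CC(X)$ directly rather than modulo an annihilator ideal. Assembling the maps $\Theta=\bigoplus_l\psi_l\circ\theta_l$ and $\Phi=\sum_{l,i}\sigma_l\circ\phi_{l,i}$, a telescoping of the error estimates, of the same flavour as at the end of the proof of \ref{Hauptsatz} and exploiting $|B^{(l)}|$ as a bookkeeping constant, gives $\Phi\circ\Theta(au_g)=_{C\delta}au_g$ for $au_g\in F$, with $C$ a constant depending only on $d$ and $r$. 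Since $\Phi$ is a sum of $(d+1)(r+1)$ c.p.c.~order zero maps, \ref{dimnuc sequence} yields $\dimnuceins(\CC(X)\rtimes_\alpha G)\leq(d+1)(r+1)=\amdimeins(\alpha)\cdot\dimeins(X)$.

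The main technical obstacle is the exact/approximate order zero verification for $\sigma_l$ in the presence of the twisting by $u_{g^{-1}}^{*}$ and $u_{h^{-1}}$; the only substantive input needed is that $\eps$ can be chosen small enough relative to $\delta$ and $|M|$ so that the discrepancy introduced by $\mu^{(l)}_h\circ\alpha_{g^{-1}}\neq\mu^{(l)}_{gh}$ stays within tolerance on $F$. As an alternative, one can route the argument through the simplicial model of \ref{amdim functions}: an $(M,\eps)$-approximately equivariant map $\phi\colon X\to|Z|$ to a free $G$-simplicial complex of dimension $\leq d$ induces an equivariant $*$-homomorphism $\CC_0(|Z|)\to\CC(X)$ and exhibits $\CC(X)\rtimes_\alpha G$ as an approximately $\CC(|Z|/G)$-algebra with fibres that are finite matrix algebras over $\CC(X)$; the nuclear dimension bound then follows from a standard gluing argument, since $\dim(|Z|/G)\leq d$ and each fibre has nuclear dimension at most $r$.
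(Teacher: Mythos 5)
The paper itself does not prove this theorem: it is imported from Guentner--Willett--Yu \cite[Theorems 4.11 and 8.6]{GuentnerWillettYu15} and used as a black box, so there is no internal proof to compare against. Your overall strategy --- rerunning the proof of Theorem \ref{Hauptsatz} with the single family $\mu^{(l)}\colon G\to\CC(X)_{1,+}$ from Lemma \ref{lem:amdim-abelian} playing the combined role of the scalar decay functions and the Rokhlin tower --- is the right one and is essentially how such bounds are established. However, as written your construction has a concrete error at exactly the point you flag as the main technical obstacle.

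The functions $\mu^{(l)}_h$ are placed in the wrong ``frame'' relative to the regular representation. By Remark \ref{leftregular} one has $Q_l\,au_g\,Q_l=\sum_{h}e_{h,g^{-1}h}\otimes\alpha_{h^{-1}}(a)$, so the $(h,h)$-entry lives in the $\alpha_{h^{-1}}$-twisted copy of $\CC(X)$. With $D_l=\diag(\mu^{(l)}_h)_h$ the commutator estimate requires $\|\sqrt{\mu^{(l)}_h}-\sqrt{\smash[b]{\mu^{(l)}_{g^{-1}h}}}\|$ to be small, but for $g\neq 1$ these two functions are \emph{exactly orthogonal}, so this norm is of order $1$; approximate equivariance gives $\mu^{(l)}_{g^{-1}h}\approx\alpha_{g^{-1}}(\mu^{(l)}_h)$, not $\mu^{(l)}_{g^{-1}h}\approx\mu^{(l)}_h$. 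The same misplacement breaks the order-zero verification for $\sigma_l$: in $\sigma_l(e_{g_1,g_2}\otimes a)\sigma_l(e_{g_3,g_4}\otimes b)$ the unitaries meet as $\sqrt{\mu^{(l)}_{g_2}}\,u_{g_2^{-1}}u_{g_3^{-1}}^{*}\,\sqrt{\mu^{(l)}_{g_3}}=u_{g_2^{-1}g_3}\,\alpha_{g_3^{-1}g_2}(\sqrt{\mu^{(l)}_{g_2}})\sqrt{\mu^{(l)}_{g_3}}$, and $\alpha_{g_3^{-1}g_2}(\mu^{(l)}_{g_2})\approx\mu^{(l)}_{g_3^{-1}g_2^{2}}$ need not be orthogonal to $\mu^{(l)}_{g_3}$ (take $g_2=1$ and $g_3$ of order two, where $g_3^{-1}g_2^2=g_3$). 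The fix is to replace $\mu^{(l)}_h$ by $\alpha_{h^{-1}}(\mu^{(l)}_h)$ in both $D_l$ and the row $v_l$ (equivalently, place $\sqrt{\mu^{(l)}_h}$ \emph{outside} the unitary $u_{h^{-1}}$); then the exact orthogonality $\mu^{(l)}_g\mu^{(l)}_h=0$ is applied precisely where the two unitaries meet, and the commutator estimate reduces to approximate equivariance as intended. Even after this correction, $\sigma_l$ is only \emph{approximately} order zero --- the multiplier one extracts agrees with $\sum_g\mu^{(l)}_g$ only up to the equivariance error --- so your claim that the computation is ``cleaner here because all the relevant relations hold in $\CC(X)$ directly'' is not correct: one still needs a passage to a sequence algebra or a projectivity/perturbation step to obtain genuine order zero maps before invoking \ref{dimnuc sequence}. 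Your alternative route via the simplicial model of \ref{amdim functions} is closer to what Guentner--Willett--Yu actually do, but as sketched (``a standard gluing argument'') it remains an outline rather than a proof.
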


This theorem allows us to provide a better estimate for~\ref{free nilpotent dimnuc} when $X$ is compact. In view of Definition~\ref{amdim} and Lemma~\ref{amdim functions}, it is interesting to ask in what generality this formula holds beyond the commutative case, with $\amdim(\alpha)$ replaced by $\amdim(\bar{\alpha})$. 

\begin{cor} \label{free nilpotent dimnuc better}
Let $G$ be a finitely generated, infinite nilpotent group and $X$ a compact metric space of finite covering dimension. Let $\alpha: G\curvearrowright X$ be a free action. Then the transformation group \cstar-algebra $\CC(X)\rtimes_\alpha G$ has finite nuclear dimension, and in fact
\[
\dimnuceins(\CC(X)\rtimes_\alpha G) \leq 3^{\ell_{\mathrm{Hirsch}}(G)} \cdot \dimeins(X)^2.
\]
\end{cor}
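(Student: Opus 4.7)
The plan is to observe that this is a direct consequence of combining Theorem \ref{free nilpotent amdim} with Theorem \ref{amdim dimnuc}, effectively replacing the use of Theorem \ref{Hauptsatz} (which was the route taken in the earlier Corollary \ref{free nilpotent dimnuc}) by the sharper bound of Guentner, Willett and Yu. The improvement is that the factor $\asdimeins(\square_s G)$ is no longer needed, because Theorem \ref{amdim dimnuc} already encodes the complexity of the group action directly via the amenability dimension, without having to route through Rokhlin dimension and the box space.

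More concretely, first I would invoke Theorem \ref{free nilpotent amdim}, which gives the bound
\[
\amdimeins(\bar{\alpha}) \;\leq\; 3^{\ell_{\mathrm{Hir}}(G)}\cdot\dimeins(X),
\]
where $\bar\alpha : G \curvearrowright \CC(X)$ is the induced \cstar-dynamical system. Then, since $\alpha$ is a free action of a countable discrete group on a compact metric space of finite covering dimension, Theorem \ref{amdim dimnuc} applies and yields
\[
\dimnuceins(\CC(X)\rtimes_\alpha G) \;\leq\; \amdimeins(\alpha)\cdot\dimeins(X).
\]
Note that here $\amdim(\alpha)$ in the sense of Definition \ref{top-amdim} agrees with $\amdim(\bar\alpha)$ in the sense of Definition \ref{amdim}, by virtue of Lemma \ref{amdim functions}. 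Chaining the two estimates gives precisely
\[
\dimnuceins(\CC(X)\rtimes_\alpha G) \;\leq\; 3^{\ell_{\mathrm{Hir}}(G)}\cdot\dimeins(X)^{2},
\]
as claimed.

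There is essentially no obstacle to this argument; the only thing worth verifying is that the two notions of amenability dimension (the topological/covering one of Guentner--Willett--Yu, and the \cstar-algebraic one of Definition \ref{amdim}) agree for commutative coefficient algebras arising from free topological actions, but this has already been established in Lemma \ref{amdim functions}. In particular, no new analytic input is required beyond what has already been assembled in the preceding sections, so the statement follows formally from the two cited theorems.
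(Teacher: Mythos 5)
Your argument is exactly the paper's: combine Theorem \ref{free nilpotent amdim} with Theorem \ref{amdim dimnuc}, using Lemma \ref{amdim functions} to identify the topological amenability dimension of $\alpha$ with the \cstar-algebraic one of $\bar\alpha$. This is correct and is the same route the paper takes.
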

\begin{proof}
This follows directly from~\ref{free nilpotent amdim},~\ref{amdim functions} and~\ref{amdim dimnuc}.
\end{proof}

\begin{rem}
Some time after the initial preprint version of this paper was published, Bartels~\cite{Bartels16} has independently proved a more general version of~\ref{free nilpotent amdim} with slightly different methods. In particular, his result~\cite[1.10]{Bartels16} in conjuction with \ref{amdim dimnuc} implies that the finiteness part of~\ref{free nilpotent amdim} holds for virtually nilpotent groups, although no specific estimate is obtained this way. Using his results instead of ours, we thus obtain the following improvement of~\ref{crossed product classifiable}:
\end{rem}

\begin{theorem}\label{thm:Bartels-extension}
Let $G$ be a finitely generated, infinite, virtually nilpotent group and $X$ a compact metric space of finite covering dimension. Let $\alpha: G\curvearrowright X$ be a free and minimal action. Then the \cstar-algebra $\CC(X)\rtimes_\alpha G$ is a simple ASH algebra of topological dimension at most 2.
\end{theorem}


\section{Rokhlin dimension with commuting towers}\label{sec:commuting-towers}

\noindent
In this section, we consider the notion of Rokhlin dimension with commuting towers, in analogy to what has been done in~\cite[Section 5]{HirshbergWinterZacharias14}. Similarly as in~\cite[Section 5]{HirshbergWinterZacharias14}, it turns out that cocycle actions with finite Rokhlin dimension with commuting towers preserve $\CZ$-stability. Moreover, borrowing a technique from~\cite{HirshbergSzaboWinterWu16}, this turns out to be true even for $\CD$-stability, for any strongly self-absorbing \cstar-algebra $\CD$. Let us first recall some relevant notions. 

\begin{defi}[see\ {\cite[1.3]{TomsWinter07}}] 
\label{defi:SSA}
A separable, unital \cstar-algebra $\CD$ is called strongly self-absorbing, if there is an isomorphism $\phi: \CD \to \CD \otimes \CD$ that is approximately unitarily equivalent to $\mathrm{id}_\CD \otimes \eins_\CD : \CD \to \CD \otimes \CD$, i.e.\ there is a sequence of unitaries $u_n \in \CD \otimes \CD$ such that 
\[
\phi(x) = \lim_{n\to \infty} u_n(x \otimes \eins_\CD)u_n^*
\]
for all $x\in\CD$.

A \cstar-algebra $A$ is called $\CD$-stable if $A \cong A \otimes \CD$. 
\end{defi}

Strongly self-absorbing \cstar-algebras are always simple and nuclear. Known examples are the Jiang-Su algebra $\mathcal{Z}$, any UHF algebra of infinite type, the Cuntz algebras $\mathcal{O}_\infty$ and $\mathcal{O}_2$, as well as countable tensor products of these.
They play an important role in the Elliott classification programme in that $\CD$-stability can be considered as an important regularity property. Most notably, $\CZ$-stability appears as a prerequisite for a \cstar-algebra to be inside a classifiable class. We are thus led to the problem of determining cases in which $\CD$-stability is preserved under forming crossed products.

Next we introduce the main definition of the section, which is a strengthening of~\ref{dimrok central sequence} and~\ref{dimrok allgemein}.

\begin{defi} \label{dimrokc} Let $A$ be a separable \cstar-algebra, $G$ a residually finite group and $(\alpha,w): G\curvearrowright A$ a cocycle action. Let $H\subset G$ be a subgroup of finite index. We say that $\alpha$ has Rokhlin dimension $d$ with commuting towers and relative to $H$, and write $\dimrokc(\alpha, H)=d$, if $d$ is the smallest number such that there exist equivariant order zero maps
\[
\phi_l: (\CC(G/H),G\text{-shift})\to (F_\infty(A), \tilde{\alpha}_\infty)\quad (l=0,\dots,d)
\]
with pairwise commuting ranges and $\eins = \phi_0(\eins)+\dots+\phi_d(\eins)$.

Let $\sigma=(G_n)_n\in\Lambda(G)$ be a regular approximation of $G$. We define 
\[
\dimrokc(\alpha,\sigma) = \sup_{n\in\IN}~ \dimrokc(\alpha,G_n)
\]
and
\[
\dimrokc(\alpha) = \sup\set{ \dimrokc(\alpha, H) ~|~ H\subset G,~ [G:H]<\infty }.
\]
\end{defi}

Before we can prove the main theorem of the section, we need two results from the literature. The first may be called a folklore result. Note that the unital case of it was already implicitly at the core of the results of~\cite[4.6]{IzumiMatui10},~\cite[4.8]{MatuiSato12_2} and~\cite[4.9]{MatuiSato14}. The proof essentially goes by a similar method as in~\cite[2.3]{TomsWinter07},~\cite[4.11]{Kirchberg04} or~\cite[2.3.5, 7.2.1, 7.2.2]{Rordam}. A detailed treatment has been given by the first author in~\cite{Szabo15ssa} for more general (cocycle) actions. 

\begin{theorem}[see {\cite[3.8]{Szabo15ssa}}] \label{Matui}
Let $G$ be a countable, discrete group, $A$ a separable \cstar-algebra and let $\CD$ be a strongly self-absorbing \cstar-algebra.
Let $(\alpha, w): G\curvearrowright A$ be a cocycle action. Then $(\alpha,w)$ is cocycle conjugate to $(\alpha\otimes\id_\CD, w\otimes\eins_\CD)$ if and only if there exists a unital $*$-homomorphism from $\CD$ to the fixed point algebra $F_\infty(A)^{\tilde{\alpha}_\infty}$.
\end{theorem}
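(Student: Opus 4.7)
The plan is to handle the two directions separately, with most of the work going into the ``if'' direction. For the ``only if'' direction, suppose $(\alpha, w)$ is cocycle conjugate to $(\alpha \otimes \id_\CD, w \otimes \eins_\CD)$. Under any cocycle conjugacy, the relative central sequence algebras together with their induced actions are canonically isomorphic, so it suffices to exhibit a unital $*$-homomorphism $\CD \to F_\infty(A \otimes \CD)^{(\tilde\alpha \otimes \id)_\infty}$. This is standard: since $\CD$ is strongly self-absorbing, iteration of the defining isomorphism $\CD \cong \CD \otimes \CD$ produces a sequence of unital embeddings $\CD \to \CD^{\otimes\infty}$ into a ``tail'' of the infinite tensor power, and these yield a unital $*$-homomorphism into the central sequence algebra that lies entirely in the second tensor factor and is therefore fixed by $\tilde\alpha_\infty \otimes \id$.

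For the ``if'' direction, I would run an equivariant two-sided McDuff-type intertwining between the identity $\id_A : A \to A$ and the inclusion $\iota : A \to A \otimes \CD$, $a \mapsto a \otimes \eins_\CD$. The starting data is the given unital $*$-homomorphism $\varphi : \CD \to F_\infty(A)^{\tilde\alpha_\infty}$. First, I would use strong self-absorption of $\CD$ together with a reindexation of representing sequences to promote $\varphi$ to a unital embedding $\CD^{\otimes\infty} \to F_\infty(A)^{\tilde\alpha_\infty}$. Choosing representatives, this produces sequences of unital c.p.\ maps $\CD \to A$ that are asymptotically central in $A$, asymptotically equivariant under $\alpha$, and asymptotically commute with the cocycle unitaries $w(g,h)$, with control over any prescribed countable data. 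Pairing these with the canonical inclusion $A \to A \otimes \CD$ yields approximate one-sided inverses for $\iota$, and a standard zigzag argument produces a genuine $*$-isomorphism $\Phi : A \to A \otimes \CD$ together with unitaries $v_g \in \CU(\CM(A))$ such that $\Phi \circ \alpha_g = \ad(v_g) \circ (\alpha_g \otimes \id_\CD) \circ \Phi$.

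The main obstacle, and what separates this from the ordinary action case, is keeping track of the cocycle $w$ throughout the intertwining. One must ensure that the unitaries $v_g$ produced by the intertwining simultaneously satisfy the cocycle perturbation identity, namely that $\Phi \circ w(g,h) = v_g \cdot \alpha_g(v_h) \cdot (w(g,h) \otimes \eins_\CD) \cdot v_{gh}^*$, so that $(\Phi, v)$ defines a cocycle conjugacy in the sense of the definition used earlier. This is arranged by building into the reindexation step the requirement that the approximate intertwiners commute with each $w(g,h)$ up to error vanishing in $A_\infty$; the fixed-point condition on $\varphi$ and the annihilator quotient built into $F(D,A_\infty)$ are precisely what makes this possible. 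A Kirchberg-type $\varepsilon$-test then packages approximate centrality, approximate $\alpha$-equivariance, and approximate commutation with $\{w(g,h)\}$ into a single diagonal extraction, which is the technical heart of the argument and produces the desired cocycle conjugacy.
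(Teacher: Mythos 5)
The paper does not prove this statement itself: it is imported from \cite[3.8]{Szabo15ssa}, with only the remark that the proof goes by the two-sided intertwining method of \cite[2.3]{TomsWinter07}, \cite[4.11]{Kirchberg04} and \cite[7.2.2]{Rordam}. Your sketch follows exactly that route --- the easy direction via tail embeddings $\CD\to\CD^{\otimes\infty}$ landing in the second tensor factor and hence in the fixed points, and the substantive direction via an equivariant approximate intertwining between $\id_A$ and $a\mapsto a\otimes\eins_\CD$, with a reindexation/$\eps$-test controlling centrality, equivariance and the cocycle --- so it is consistent with the intended argument at the level of detail given.
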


The second is an important technical lemma for proving the existence of $*$-homomorphisms from strongly self-absorbing \cstar-algebras.

\begin{lemma}[see\ {\cite[5.8]{HirshbergSzaboWinterWu16}}] \label{order zero glueing}
Let $\CD$ be a strongly self-absorbing \cstar-algebra. Let $B$ be a unital \cstar-algebra. Suppose that $\psi_1,\dots,\psi_n: \CD\to B$ are c.p.c.~order zero maps with pairwise commuting ranges such that $\psi_1(\eins_\CD) +\dots + \psi_n(\eins_\CD) = \eins_B$. Then
there exists a unital $*$-homomorphism from $\CD$ to $B$.
\end{lemma}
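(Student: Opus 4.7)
My plan begins with the Winter--Zacharias structure theorem for c.p.c.\ order zero maps (cf.\ \cite{WinterZacharias09}): each $\psi_i$ corresponds uniquely to a $*$-homomorphism $\rho_i \colon C_0((0,1])\otimes\CD \to B$ with $\rho_i(\mathrm{id}_{(0,1]}\otimes d) = \psi_i(d)$; equivalently, $\psi_i(d) = h_i \pi_i(d)$ for $h_i := \psi_i(\eins_\CD)$ and a $*$-homomorphism $\pi_i \colon \CD \to \CM(C^*(\psi_i(\CD)))$ commuting with $h_i$. Since $\CD$ is nuclear (being strongly self-absorbing), so is the cone $C_0((0,1])\otimes\CD$, and the pairwise commuting ranges of the $\rho_i$ allow me to assemble them via the tensor product into a single $*$-homomorphism $\Theta \colon C_0((0,1]^n)\otimes\CD^{\otimes n} \to B$ with $\Theta(f\otimes\eins_{\CD^{\otimes n}}) = f(h_1,\dots,h_n)$ by functional calculus on the commuting positive contractions.

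Next, I would exploit the strongly self-absorbing property in the form of an isomorphism $\alpha \colon \CD \xrightarrow{\cong} \CD^{\otimes n}$. The goal is to construct an auxiliary $*$-homomorphism $\phi \colon C_0((0,1]) \to C_0((0,1]^n)$ so that the composite
\[
C_0((0,1])\otimes\CD \xrightarrow{\phi\otimes\alpha} C_0((0,1]^n)\otimes\CD^{\otimes n} \xrightarrow{\Theta} B
\]
corresponds via the Winter--Zacharias correspondence to a c.p.c.\ order zero map $\tilde\psi \colon \CD \to B$ with $\tilde\psi(\eins_\CD) = \phi(\mathrm{id}_{(0,1]})(h_1,\dots,h_n) = \eins_B$; unitality of its support element would then force $\tilde\psi$ to be a unital $*$-homomorphism by the structure theorem, completing the proof. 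The crucial observation enabling the choice of $\phi$ is that the hypothesis $\sum_i h_i = \eins_B$ confines the joint spectrum $\Sigma$ of $(h_1,\dots,h_n)$ to the simplex $\{s \in [0,1]^n : \sum_i s_i = 1\}$, on which $\max_i s_i \geq 1/n$.

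The main obstacle will be the construction of $\phi$. A $*$-homomorphism $C_0((0,1]) \to C_0((0,1]^n)$ corresponds to a continuous map $\varphi \colon [0,1]^n \to [0,1]$ which vanishes whenever \emph{some} $s_i = 0$, so the naive candidate $\varphi(s) = \min(\sum_i s_i, 1)$ is ruled out. Worse, $\Sigma$ can touch the coordinate boundary of $(0,1]^n$ when some $h_i$ has $0$ in its spectrum, so no single $\varphi$ can both vanish on this boundary and satisfy $\varphi|_\Sigma \equiv 1$. My proposed workaround is to pass to the sequence algebra $B_\infty$ and implement a partition-of-unity argument subordinate to the open cover $\{s : s_i > 1/(2n)\}_{i=1}^n$ of $\Sigma$, on whose members the corresponding $h_i$ is uniformly invertible; this yields an approximately multiplicative map $\CD \to B_\infty$, which can then be upgraded to an exact unital $*$-homomorphism using the approximately inner half-flip characterization of the SSA property (see Definition \ref{defi:SSA}) together with a reindexation argument returning the desired unital $*$-homomorphism into $B$ itself.
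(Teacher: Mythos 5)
The paper gives no proof of this lemma: it is quoted from \cite[Section 6]{HirshbergSzaboWinterWu15}, with the matrix-algebra prototype being \cite[7.6]{KirchbergRordam12}, as the remark immediately following the lemma points out. Your first half is sound and does match how those arguments begin: the Winter--Zacharias correspondence, the assembled $*$-homomorphism $\Theta$ on $C_0((0,1]^n)\otimes\CD^{\otimes n}$ coming from nuclearity and commuting ranges, and the observation that the joint spectrum $\Sigma$ of $(h_1,\dots,h_n)$ lies in the simplex $\{s:\sum_i s_i=1\}$ yet may touch the coordinate faces, so that no single $\varphi\in C_0((0,1]^n)$ can be $1$ on $\Sigma$. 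All of that is correct.

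The gap is in the workaround, which is where the entire content of the lemma sits. A partition of unity $\chi_1,\dots,\chi_n$ subordinate to the cover $\{s:s_i>1/(2n)\}$ gives the unital c.p.c.\ map $d\mapsto\sum_i\chi_i(h_1,\dots,h_n)\,\pi_i(d)$, where $\pi_i$ is the supporting $*$-homomorphism of $\psi_i$ (well defined against spectral cutoffs of $h_i$ bounded away from $0$). This map is \emph{not} approximately multiplicative: on the overlaps of the supports one picks up cross terms $\chi_i(h)\chi_j(h)\,\pi_i(d)\pi_j(d')$, and $\pi_i(d)\pi_j(d')$ bears no relation to $\pi_i(dd')$ or $\pi_j(dd')$ --- the $n$ local unital copies of $\CD$ simply disagree where the cover overlaps. (Unital c.p.c.\ maps exist trivially, e.g.\ $d\mapsto\tau(d)\eins$ for a state $\tau$; multiplicativity is the whole point.) Strong self-absorption must therefore enter \emph{inside} the patching, not as a subsequent ``upgrade'': one has to conjugate the local embeddings into one another over the overlaps, using that any two unital $*$-homomorphisms $\CD\to\CD^{\otimes n}$ are unitarily equivalent along \emph{continuous paths} of unitaries (strong asymptotic unitary equivalence, \`a la Dadarlat--Winter), continuity in the spectral parameter being what makes the glued map well defined; for $\CD=M_k$ this is exactly the role played by connectedness of $\CU(M_{k^n})$ in \cite[7.6]{KirchbergRordam12}. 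Separately, even granting a unital $*$-homomorphism $\CD\to B_\infty$, a reindexation argument does not return one into an arbitrary unital $B$: reindexation applies when the target is itself a sequence-algebra-type object (as in the paper's application to $F_\infty(A)^{\tilde{\alpha}_\infty}$), whereas the lemma is stated for arbitrary unital $B$ and $\CD$ need not be weakly semiprojective. Both halves of your patch would need to be replaced by the actual gluing argument of \cite{HirshbergSzaboWinterWu15}.
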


\begin{rem}
We note that one can use~\cite[7.6]{KirchbergRordam12} to see that~\ref{order zero glueing} holds, if one replaces $\CD$ either by a matrix algebra or a dimension drop algebra. This would be enough to prove~\ref{dimrokc sigma D} for $\CD$ being either $\CZ$ or a UHF algebra of infinite type\footnote{In fact, this was done in a previous preprint version of this paper.}.
\end{rem}

The main theorem of the section is a generalization of~\cite[5.8, 5.9]{HirshbergWinterZacharias14}:

{
\theorem \label{dimrokc sigma D}
Let $\CD$ be a strongly self-absorbing \cstar-algebra and $A$ a separable, $\CD$-stable \cstar-algebra. Let $G$ be a countable, residually finite group and $(\alpha,w): G\curvearrowright A$ a cocycle action. Assume that $\sigma\in\Lambda(G)$ is a regular approximation with $\asdim(\square_\sigma G)<\infty$ and $\dimrokc(\alpha, \sigma)<\infty$. Then $(\alpha,w)$ is cocycle conjugate to $(\alpha\otimes\id_\CD, w\otimes\eins_\CD)$, and in particular the twisted crossed product $A\rtimes_{\alpha,w} G$ is $\CD$-stable.
}
\begin{proof}
Denote $s=\asdim(\square_\sigma G)$ and $d=\dimrokc(\alpha, \sigma)$. Denote $\sigma=(G_n)_n$. 
As $A$ is $\CD$-stable, there exists a unital $*$-homomorphism from $\CD$ to $F_\infty(A)$.
By a standard reindexation trick (such as~\cite[1.13]{Kirchberg04}), we may find unital embeddings $\iota_{l,j}: \CD\to F_\infty(A)$ for $l=0,\dots,d$ and $j=0,\dots,s$ such that
\[
[\iota_{l_1,j_1}(x), \tilde{\alpha}_{\infty,g}(\iota_{l_2,j_2}(y))] = 0\quad\text{for all}~x,y\in \CD,~ g\in G~\text{and}~(l_1,j_1)\neq (l_2,j_2).
\] 
Let $M\fin G$ and $\eps>0$ be arbitrary. By~\ref{decay functions}(3), we can find $n$ and finitely supported functions $\nu^{(j)}: G\to [0,1]$ for $j=0,\dots,s$ satisfying
{
\renewcommand{\theenumi}{\alph{enumi}}
\begin{enumerate}
 \item For every $j=0,\dots,s$, one has
\[
\supp(\nu^{(j)})\cap\supp(\nu^{(j)})h=\emptyset\quad\text{for all}~h\in G_n\setminus\set{1}.
\]
 \item For every $g\in G$, one has
\[
\sum_{j=0}^s \sum_{h\in G_n}  \nu^{(j)}(gh)=1.
\]
 \item For every $j=0,\dots,s$ and $g\in M$, one has 
\[
\|\nu^{(j)}-\nu^{(j)}(g\cdot\_\!\_)\|_\infty\leq\eps.
\]
\end{enumerate}
}
In order to shorten the notation for the rest of the proof, we will write $\nu^{(j)}_h=\nu^{(j)}(h)$ for all $j=0,\dots,s$ and $h\in G$.

By definition, there are equivariant c.p.c.~order zero maps
\[
\phi_l: (\CC(G/G_n),G\text{-shift})\to (F_\infty(A), \tilde{\alpha}_\infty)\quad (l=0,\dots,d)
\]
with pairwise commuting ranges and $\eins = \phi_0(\eins)+\dots+\phi_d(\eins)$. Without loss of generality, we may assume that the image of each map $\phi_l$ commutes with the image of each map of the form $\tilde{\alpha}_{\infty,g}\circ\iota_{l',j}$ for $l,l'=0,\dots,d$, $j=0,\dots,s$ and $g\in G$. 

For each $\bar{g}\in G/G_n$, let $e_{\bar{g}}\in\CC(G/G_n)$ denote the characteristic function of $\set{\bar{g}}$. For each $j=0,\dots,s$ and $l=0,\dots,d$, define the maps $\psi_{l,j}: \CD\to F_\infty(A)$ via
\[
\psi_{l,j} = \sum_{g\in G} \nu^{(j)}_g\cdot \phi_l(e_{\bar{g}})\cdot\tilde{\alpha}_{\infty,g}\circ\iota_{l,j}.
\]
Then each of the maps $\psi_{l,j}$ is c.p.c.~order zero. They also have pairwise commuting ranges because we have
\[
\|[\psi_{l_1,j_1}(x_1),\psi_{l_2,j_2}(x_2)]\| \leq \sup_{g_1,g_2\in G} \big\| \big[ \tilde{\alpha}_{\infty, g_1}(\iota_{l_1,j_1}(x_1)), \tilde{\alpha}_{\infty,g_2}(\iota_{l_2,j_2}(x_2)) \big] \big\| = 0
\]
for all $(l_1,j_1)\neq (l_2, j_2)$ and all $x_1,x_2\in\CD$. We have moreover
\[
\begin{array}{lll}
 \dst\sum_{l=0}^d\sum_{j=0}^s \psi_{l,j}(\eins) &=& \dst \sum_{l=0}^d \sum_{j=0}^s \sum_{g\in G}\nu^{(j)}_g\cdot \phi_l(e_{\bar{g}})\cdot\tilde{\alpha}_{\infty,g}\circ\iota_{l,j}(\eins) \\\\
&=& \dst \sum_{l=0}^d \sum_{j=0}^s \sum_{g\in G}\nu^{(j)}_g\cdot \phi_l(e_{\bar{g}}) \\\\
&=& \dst \sum_{l=0}^d \sum_{\bar{g}\in G/G_n} \phi_l(e_{\bar{g}})\cdot \sum_{j=0}^s \sum_{h\in\bar{g}} \nu^{(j)}_h \\\\
&=& \dst \sum_{l=0}^d \phi_l(\eins) \quad = \quad \eins
.\end{array}
\]
Condition (c) of the functions $\nu^{(j)}$ ensures that for all $l=0,\dots,d$, $j=0,\dots,s$ and $g\in M$, we have $\| \psi_{l,j} - \tilde{\alpha}_{\infty,g}\circ\psi_{l,j}\|\leq\eps$.

Since $M\fin G$ and $\eps>0$ were arbitrary, we can apply a standard diagonal sequence argument to construct new c.p.c.~order zero maps $\psi_{l,j}: \CD\to F(A)^{\tilde{\alpha}_\infty}$ for $l=0,\dots,d$ and $j=0,\dots,s$ with pairwise commuting ranges and
\[
\sum_{l=0}^d\sum_{j=0}^s \psi_{l,j}(\eins) = \eins.
\]
We can now apply~\ref{order zero glueing} to obtain the existence of some unital $*$-homomorphism from $\CD$ to $F_\infty(A)^{\tilde{\alpha}_\infty}$. The claim now follows from~\ref{Matui}.
\end{proof}

\begin{rem}
The usefulness of a result like~\ref{dimrokc sigma D} is showcased in the following two non-trivial applications:
\begin{enumerate}
\item Let $G$ be a finite group. If $\alpha: G\curvearrowright\CO_2$ is an arbitrary action with $\dimrokc(\alpha)<\infty$, then it follows from \ref{dimrokc sigma D} that $\alpha$ absorbs the trivial $G$-action on $\CO_2$. Thus by \cite[4.2]{Izumi04}, $\alpha$ is uniquely determined up to conjugacy. In fact, the same argument works for residually finite groups with $\asdim(\square_s G)<\infty$ due to \cite[5.5]{Szabo16kp}. 
\item In contrast to the above, nice torsion-free groups such as $G=\IZ^n$ exhibit a different behavior. There are results such as \cite{Liao15, Liao16} showing that strongly outer actions of $G$ on nice, classifiable finite \cstar-algebras have finite Rokhlin dimension; although commuting towers are not arranged in the cited articles, this will be done in the first author's forthcoming work. It then follows from~\ref{dimrokc sigma D} that equivariant $\CD$-absorption is an automatic phenomenon for nice, torsion-free group actions on classifiable $\CD$-stable \cstar-algebras, as one would expect from $K$-theoretic considerations; cf.\ \cite[4.12 and 4.16]{Szabo16kp}.
\end{enumerate}
In light of the fact that all outer $G$-actions on a Kirchberg algebra have Rokhlin dimension at most one (see \cite[6.3]{Szabo16kp}), the above discussion supports the idea that, as compared to the condition of finite Rokhlin dimension, finite Rokhlin dimension with commuting towers is a more rigid notion when the acting group has torsion. See \cite{HirshbergPhillips15}, where such a phenomenon was discovered much earlier by $K$-theoretic methods.
\end{rem}


\section{Genericity of finite Rokhlin dimension on $\CZ$-stable \cstar-algebras}\label{sec:genericity}

\noindent
In this section, we investigate the genericity of cocycle actions with finite Rokhlin dimension on \cstar-algebras absorbing either $\CZ$ or a certain UHF algebra of infinite type, in analogy to what has been done in~\cite[Section 3]{HirshbergWinterZacharias14}. As we will see, the main argument in the proof of generecity relies on a more general principle that has nothing to do with Rokhlin dimension in particular.
For instance, we also prove that for every strongly self-absorbing \cstar-algebra $\CD$ and every separable, $\CD$-stable \cstar-algebra $A$, a cocycle action on $A$ will generically absorb the trivial action on $\CD$ tensorially.

Let us first establish some notation.

\begin{defi} \label{polish}
Let $G$ be a countable, discrete group and $A$ a \cstar-algebra. Consider the set $\coact(G,A)$ of all cocycle actions $(\alpha, w): G\curvearrowright A$. For a pair of $F\fin A$ and $M\fin G$, the map
\[
d_{F,M}: \coact(G,A)\times \coact(G,A) \to \IR^{\geq 0}
\]
given by
\[
\begin{array}{ccl}
d_{F,M}\bigl( (\alpha^1, w^1) , (\alpha^2, w^2) \bigl) &=& \dst\max_{a\in F, g\in M} \|\alpha^1_g(a)-\alpha^2_g(a)\| \\
&& \dst + \max_{a\in F, g\in M} \big( \|(w^1_g - w^2_g)a\|+\|a(w^1_g-w^2_g)\| \big)
\end{array}
\]
defines a pseudometric on $\coact(G,A)$. We equip $\coact(G,A)$ with the topology induced by all these pseudometrics. In other words, a net $(\alpha^\lambda,w^\lambda)$ of cocycle $G$-actions on a \cstar-algebra $A$ converges to a cocycle action $(\alpha,w)$, if $\alpha^\lambda_g\to\alpha_g$ in point-norm and $w^\lambda_g\to w_g$ strictly, for all $g\in G$.
\end{defi}

\begin{rem}
By choosing a countable, dense set, one can see that when $A$ is separable, this topology is metrizable, and in fact $\coact(G,A)$ becomes a Polish space with any such metric.
A subset of cocycle actions in $\coact(G,A)$ is called generic, if it contains a countable intersection of dense open subsets.
\end{rem}

{
\prop \label{Gdelta}
Let $G$ be a countable, discrete group and $A$ a separable \cstar-algebra. Let $\CD$ be a strongly self-absorbing \cstar-algebra. Assume that $A\cong A\otimes\CD$. Let $d\in\IN$ be a natural number. Then 
\begin{enumerate}
\item The set of all cocycle actions $(\alpha, w): G\curvearrowright A$, that are cocycle conjugate to $(\alpha\otimes\id_\CD, w\otimes\eins_\CD)$, forms a $G_\delta$-subset in $\coact(G,A)$.
\item For every subgroup $H\subset G$ of finite index, the set of all cocycle actions $(\alpha,w): G\curvearrowright A$ with $\dimrok(\alpha, H)\leq d$ forms a $G_\delta$-subset in $\coact(G,A)$.
\item Assume that $G$ is residually finite. Given a regular approximation $\sigma$ of $G$, the set of all cocycle actions $(\alpha, w): G\curvearrowright A$ with $\dimrok(\alpha, \sigma)\leq d$ forms a $G_\delta$-subset in $\coact(G,A)$.
\item Assume that $G$ is residually finite and finitely generated. The set of all cocycle actions $(\alpha, w): G\curvearrowright A$ with $\dimrok(\alpha)\leq d$ forms a $G_\delta$-subset in $\coact(G,A)$.
\end{enumerate}
}
\begin{proof}
(1) We make use of~\ref{Matui}.
Let $S_n\fin\CD$ be an increasing sequence of finite subsets whose union is dense in $\CD$, and let $F_n\fin A_1$ be an increasing sequence of finite subsets whose union is dense in the unit ball of $A$. Let $M_n\fin G$ be an increasing sequence of finite subsets whose union is $G$. For each $n\in\IN$, define the set $V_n\subset\coact(G,A)$ by saying that $(\alpha,w)\in V_n$, if there exists a *-linear map $\phi: \CD\to A$ satisfying:
{
\renewcommand{\theenumi}{\alph{enumi}}
\begin{enumerate}
\item $\|[a,\phi(d)]\|<\frac{1}{n}$ for all $a\in F_n$ and $d\in S_n$;
\item $\| \big( \phi(d_1d_2)-\phi(d_1)\phi(d_2) \big) a\|<\frac{1}{n}$ for all $a\in F_n$ and $d_1,d_2\in S_n$;
\item $\|\phi(d)a\|<\frac{n+1}{n}\|d\|$ for all $a\in F_n$ and $d\in S_n$;
\item $\|\phi(\eins_\CD)a-a\|<\frac{1}{n}$ for all $a\in F_n$;
\item $\|\big( (\alpha_g\circ\phi)(d)-\phi(d) \big) a\|<\frac{1}{n}$ for all $a\in F_n, d\in S_n$ and $g\in M_n$.
\end{enumerate}
}
Looking at the above conditions (in fact only the last one), it is clear that each $V_n$ defines an open subset of $\coact(G,A)$ with respect to the topology from~\ref{polish}. A cocycle action $(\alpha,w)\in\coact(G,A)$ admits a unital $*$-homomorphism $\CD\to F_\infty(A)^{\tilde{\alpha}_\infty}$ if and only if $(\alpha,w)\in\bigcap_{n\in\IN} V_n$.

(2) In a similar fashion as (1), this follows directly from the equivalent reformulation of $\dimrok(\alpha,H)\leq d$ in~\ref{quantifier dimrok}.

(3) Let $\sigma=(G_n)_n$. By definition, we have that
\[
\set{(\alpha,w)\in\coact(G,A)~|\dimrok(\alpha, \sigma)\leq d }
\]
coincides with
\[
\bigcap_{n\in\IN} \set{(\alpha,w)\in\coact(G,A)~|\dimrok(\alpha, G_n)\leq d }.
\]
So it follows from (2) that one obtains a $G_\delta$-set. 

(4) follows from (3) by applying~\ref{dominating fg},~\ref{dimrok-domi} and inserting a dominating regular approximation.
\end{proof}

\begin{nota}
Let $G$ be a countable, discrete group. Consider the category $\CC_G$ of all (twisted) \cstar-dynamical systems $(A,\alpha,w)$ for separable \cstar-algebras $A$ and cocycle actions $(\alpha,w): G\curvearrowright A$, where the morphisms are equivariant, non-degenerate $*$-homomorphisms respecting the cocycles. We have a natural (maximal) tensor product between (twisted) \cstar-dynamical systems given by 
\[
(A,\alpha,w)\otimes_{\max}(B,\beta,v) = (A\otimes_{\max} B, \alpha\otimes_{\max}\beta, w\otimes v).
\]
Now let $\CS$ be a strictly full\footnote{This means that $\CS$ retains all morphisms from $\CC_G$ between its objects and that $\CS$ is closed under isomorphism in $\CC_G$. In this case, two systems are isomorphic if they are conjugate.} subcategory of $\CC_G$. 
We call $\CS$ tensorially absorbing, if whenever $(\alpha,w): G\curvearrowright A$ and $(\beta,v): G\curvearrowright B$ are cocycle actions with $(B,\beta,v)$ in $\CS$, then $(A,\alpha,w)\otimes_{\max}(B,\beta,v)$ is also in $\CS$.\footnote{Since we will only apply this in the case when $B$ is nuclear, the choice of maximal tensor product is rather arbitrary.}
\end{nota}

Let us record a general observation that will lead to the genericity of various properties of cocycle actions:

\theorem \label{absorbing generic}
Let $G$ be a countable, discrete group. Let $\CS$ be a strictly full subcategory of $\CC_G$ that is tensorially absorbing in the above sense.
Let $A$ and $\CD$ be two separable \cstar-algebras with $\CD$ being unital and strongly self-absorbing. Assume that $A\cong A\otimes\CD$. If there is some action $\gamma: G\curvearrowright\CD$ such that $(\CD,\gamma)$ is in $\CS$, then the set of all cocycle actions $(\alpha,w): G\curvearrowright A$ yielding an object in $\CS$ is dense in $\coact(G,A)$.
\begin{proof}
Let $(\alpha,w): G\curvearrowright A$ be a cocycle action. Let $\phi: A\to A\otimes\CD$ be an isomorphism that is approximately unitarily equivalent to $\id_A\otimes\eins$, see~\cite[2.3]{TomsWinter07}. Let $u_n$ be a sequence of unitaries in the unitalization $(A\otimes\CD)^\sim$ with $u_n\phi(a)u_n^*\stackrel{n\to\infty}{\longrightarrow} a\otimes\eins$ for all $a\in A$. Put $\phi_n = \ad(u_n)\circ\phi$. Then for every pair of non-degenerate $*$-endomorphisms $\psi: A\to A$ and $\theta:\CD\to\CD$, we have for all $x\in A$ that
\[
\begin{array}{cl}
\multicolumn{2}{l}{ \| \phi_n^{-1}\circ(\psi\otimes\theta)\circ\phi_n(x) - \psi(x)\|} \\\\
\leq& \| \phi_n(x)-x\otimes\eins\| + \| \phi_n^{-1}(\psi(x)\otimes\eins) - \psi(x)\| \\\\
=& \| \phi_n(x)-x\otimes\eins\| + \| u_n^*(\psi(x)\otimes\eins)u_n - \phi(\psi(x))\| \\\\
\stackrel{n\to\infty}{\longrightarrow} & 0.
\end{array}
\]
An analogous calculation shows that $\phi_n^{-1}(m\otimes\eins)$ converges to $m$ strictly for every $m\in\CM(A)$.
In particular, the sequence of cocycle actions 
\[
(\alpha^{(n)}, w^{(n)}) = \bigl( \phi_n^{-1}\circ(\alpha\otimes\gamma)\circ\phi_n, \phi_n^{-1}(w\otimes\eins) \bigl)
\]
converges to $(\alpha,w)$ in the topology from~\ref{polish}. By the assumptions on $\CS$, each cocycle action $(\alpha^{(n)},w^{(n)})$ yields an object in $\CS$. This finishes the proof.
\end{proof}

\reme By the same proof as above, an analogous variant of~\ref{absorbing generic} is true even if $G$ is assumed to be a $\sigma$-compact, locally compact group. One merely has to pay attention to the topology on the set of all point-norm continuous cocycle $G$-actions on a \cstar-algebra, since it is defined in a slightly different way than in the discrete case~\ref{polish}.

\cor \label{D absorbing generic}
Let $G$ be a countable, discrete group and $A$ a separable \cstar-algebra. Let $\CD$ be a strongly self-absorbing \cstar-algebra. Assume that $A\cong A\otimes\CD$. Then the set of all cocycle actions $(\alpha, w): G\curvearrowright A$ that are cocycle conjugate to $(\alpha\otimes\id_\CD, w\otimes\eins_\CD)$ is generic in $\coact(G,A)$. Hence so is the set of all cocycle actions $(\alpha, w): G\curvearrowright A$ the crossed products $A \rtimes_{\alpha, w} G$ of which are $\CD$-stable. 
\begin{proof}
The only thing left to show is that such cocycle actions are dense in $\coact(G,A)$. But this follows from~\ref{absorbing generic}. Firstly, the given property of cocycle actions is trivially invariant under conjugation, and in fact defines a strictly full and tensorially absorbing subcategory of $\CC_G$. Secondly, the trivial action of $G$ on $\CD$ is clearly conjugate to its tensorial product with itself.
\end{proof}

\prop \label{dimrok absorbing}
Let $G$ be a countable, discrete and residually finite group with a regular approximation $\sigma$. Let $d\in\IN$ be a natural number. Then the full subcategory defined by all (twisted) \cstar-dynamical systems $(A,\alpha,w)$ with $\dimrok(\alpha, \sigma)\leq d$ is strictly full and tensorially absorbing.
\begin{proof}
This follows directly from the definitions~\ref{dimrok central sequence},~\ref{dimrok allgemein}.
\end{proof}

\rem \label{UHF action}
Let $G$ be a countable, discrete and residually finite group. Let $\sigma=(G_n)_n$ be a regular approximation consisting of normal subgroups. For each $n\in\IN$, denote by $\lambda_n: \Gamma_n\to\CU(M_{|\Gamma_n|})$ the left-regular representation of $\Gamma_n=G/G_n$. For each $n\in\IN$, let $\pi_n: G\onto\Gamma_n$ be the canonical surjection.
Let $M_\sigma = \bigotimes_{n=1}^\infty M_{|\Gamma_n|^\infty}$ and define $\beta^\sigma: G\curvearrowright M_\sigma$ via $\beta^\sigma_g = \bigotimes_{n=1}^\infty \bigl( \bigotimes_\IN \ad\big( (\lambda_n\circ\pi_n)(g) \big) \bigl)$.
By design, this yields an action of Rokhlin dimension zero along $\sigma$. 

{\theorem 
Let $G$ be a countable, discrete and residually finite group. Let $\sigma=(G_n)_n$ be a regular approximation consisting of normal subgroups. Assume that $A$ is a separable \cstar-algebra with $A\cong A\otimes M_\sigma$. Then the cocycle actions $(\alpha,w): G\curvearrowright A$ with $\dimrok(\alpha, \sigma)=0$ are generic in $\coact(G,A)$.}

\begin{proof}
By~\ref{Gdelta}, we only have to show that said actions are dense.
But this follows directly from~\ref{absorbing generic},~\ref{dimrok absorbing} and~\ref{UHF action}. 
\end{proof}

\theorem \label{generic dimrok UHF}
Let $G$ be a discrete, finitely generated and residually finite group. Assume that $A$ is a separable \cstar-algebra that tensorially absorbs the universal UHF algebra. Then the cocycle actions $(\alpha,w): G\curvearrowright A$ with Rokhlin dimension zero are generic in $\coact(G,A)$.

\defi Let $\Gamma$ be a finite group. Identifying $\CZ\cong\bigotimes_{\Gamma}\CZ$, we define the noncommutative Bernoulli shift $\gamma^\Gamma: \Gamma\curvearrowright\CZ$ via
\[
\gamma^\Gamma_g\Big( \bigotimes_{h\in\Gamma} a_h \Big) = \bigotimes_{h\in\Gamma} a_{gh}.
\] 

\prop \label{dimrok bernoulli}
For all finite groups $\Gamma$, the action $\gamma^\Gamma$ has Rokhlin dimension 1.
\begin{proof}
First, let us observe 
\[
\bigotimes_{g\in\Gamma} \CZ \cong \bigotimes_{g\in\Gamma} \Big(\bigotimes_\IN \CZ \Big) \cong \bigotimes_\IN \Big(\bigotimes_{g\in\Gamma} \CZ \Big).
\]
With this rearrangement of tensor factors, we get that $\gamma^\Gamma$ is conjugate to $\bigotimes_\IN\gamma^\Gamma$. Since $\gamma^\Gamma$ is a faithful action, its infinite tensor power is pointwise strongly outer. Now it follows from~\cite[3.4]{MatuiSato12_2} that $\gamma^\Gamma$ has the weak Rokhlin property in the sense of~\cite{MatuiSato12_2, MatuiSato14}. Let $\sigma: \Gamma\curvearrowright\CC(\Gamma)$ denote the canonical $\Gamma$-shift action. Let $\omega\in\beta\IN\setminus\IN$ be a free ultrafilter.
In this particular instance, the weak Rokhlin property means that there is an equivariant c.p.c.~order zero map
\[
\psi: (\CC(\Gamma), \sigma) \to (\CZ_\omega, \gamma^\Gamma_\omega)
\]
with $\tau_{\CZ,\omega}(\psi(\eins))=1$. (Recall the definition of $\CZ_\omega, \gamma^\Gamma_\omega$ from the first section.)

Now notice that the fixed point algbera $\CZ^{\gamma^\Gamma}$ is separable, unital, nuclear, simple and has a unique tracial state. Moreover, the element $\psi(\eins)$ is an element of $(\CZ^{\gamma^\Gamma})_\omega$ and has full trace. Let $h_0\in\CZ$ be a positive contraction with full spectrum $[0,1]$, and define $h_1=\eins_\CZ-h_0$. It follows from~\cite[4.3]{SatoWhiteWinter14} that there are unitaries $u_0$ and $u_1$ in $(\CZ^{\gamma^\Gamma}\otimes\CZ)_\omega$ with
\[
u_i(\psi(\eins)\otimes h_i)u_i^* = \eins\otimes h_i\quad\text{for}~i=0,1.
\]
Viewing these as unitaries in $(\CZ\otimes\CZ)_\omega$, we define the two c.p.c.~order zero maps
\[
\psi_0,\psi_1: \CC(\Gamma)\to (\CZ\otimes\CZ)_\omega\quad\text{via}\quad\psi_i(x) = u_i(\psi(x)\otimes h_i)u_i^*\quad\text{for}~i=0,1.
\]
As $\psi$ was equivariant and the unitaries $u_0,u_1$ are both fixed by $({\gamma^\Gamma}\otimes\id_\CZ)_\omega$, this actually yields two equivariant c.p.c.~order zero maps
\[
\psi_0,\psi_1: (\CC(\Gamma),\sigma)\to \bigl( (\CZ\otimes\CZ)_\omega, ({\gamma^\Gamma}\otimes\id)_\omega \bigl)\quad\text{with}\quad \psi_0(\eins)+\psi_1(\eins)=\eins.
\]
Now by~\cite[4.7, 4.8]{MatuiSato12_2}, ${\gamma^\Gamma}$ is conjugate to ${\gamma^\Gamma}\otimes\id_\CZ$. Since ${\gamma^\Gamma}$ is also conjugate to its own infinite tensor power, this verifies $\dimrok({\gamma^\Gamma})\leq 1$. Note that $\dimrok({\gamma^\Gamma})=0$ is impossible, since $\CZ$ contains no non-trivial projections.
\end{proof}

\rem \label{Z action} 
Let $G$ be a countable, discrete and residually finite group. Let $\sigma=(G_n)_n$ be a regular approximation consisting of normal subgroups. For each $n\in\IN$, let $\Gamma_n=G/G_n$ and $\pi_n: G\onto\Gamma_n$ the canonical surjection.
Identifying $\CZ\cong\bigotimes_\IN\CZ$, we define
\[
\gamma^\sigma: G\curvearrowright \CZ\quad\text{via}\quad \gamma^\sigma_g = \bigotimes_{n\in\IN} \gamma^{\Gamma_n}_{\pi_n(g)}.
\]
Notice that by design (i.e.~by~\ref{dimrok bernoulli}), this action has Rokhlin dimension 1.

{\theorem \label{generic dimrok Z}
Let $G$ be a countable, discrete and residually finite group with a chosen regular approximation $\sigma$ consisting of normal subgroups. Let $A$ be a separable \cstar-algebra with $A\cong A\otimes\CZ$. Then the cocycle actions $(\alpha,w): G\curvearrowright A$ with $\dimrok(\alpha,\sigma)\leq 1$ are generic in $\coact(G,A)$. If $G$ is finitely generated, then the cocycle actions $(\alpha,w): G\curvearrowright A$ with $\dimrok(\alpha)\leq 1$ are generic in $\coact(G,A)$.}
\begin{proof}
By~\ref{Gdelta}, we only have to show that said actions form a dense subset.
But this follows directly from~\ref{absorbing generic},~\ref{dimrok absorbing} and~\ref{Z action}.
\end{proof}

\rem It is very straightforward to see that if one restricts one's attention to genuine actions, then the genericity results of this section (in particular,~\ref{D absorbing generic},~\ref{generic dimrok UHF},~\ref{generic dimrok Z} and the general principle~\ref{absorbing generic}) remain true within the Polish space of all genuine $G$-actions on a separable \cstar-algebra $A$. In order to modify the proofs, one merely has to omit the cocycles.

\rem We would like to emphasize that unlike most of the previous results concerning genericity (such as in~\cite{HirshbergWinterZacharias14} or~\cite{Phillips12}), the results of this section were proved rather directly. In particular, we did not have to appeal to a Baire category argument in order to show denseness of any subsets in $\coact(G,A)$.


\bibliographystyle{gabor}
\bibliography{master}

\end{document}